\newcommand{\R}{\mathbb{R}}
\newcommand{\C}{\mathbb{C}}
\newcommand{\N}{\mathbb{N}}
\newcommand{\Z}{\mathbb{Z}}
\numberwithin{equation}{section}
  \let\c@subsection\c@equation
\theoremstyle{plain}
\newtheorem{thm}[equation]{Theorem}
\newtheorem{lem}[equation]{Lemma}
\newtheorem{prop}[equation]{Proposition}
\theoremstyle{definition}
\theoremstyle{remark}
\newtheorem{rem}[equation]{Remark}
\numberwithin{equation}{section}
\title[Non-homogeneous $Tb$ theorem on metric spaces]{Non-homogeneous $Tb$ theorem and random dyadic cubes on metric measure spaces}
\author{Tuomas Hytönen \and Henri Martikainen}\thanks{The authors are supported by the Academy of Finland through the project ``$L^p$ methods in harmonic analysis''.}
\address{Department of Mathematics and Statistics, University of Helsinki, P.O.B. 68, FI-00014 Helsinki, Finland}
\email{tuomas.hytonen@helsinki.fi, henri.martikainen@helsinki.fi}
\subjclass[2000]{42B20 (Primary); 30L99, 60D05 (Secondary)}
\keywords{Calder\'on--Zygmund operator, non-doubling measure, probabilistic constructions in metric spaces}
\begin{document}

\maketitle

\begin{abstract}
We prove a $Tb$ theorem on quasimetric spaces equipped with what we call an upper doubling measure.
This is a property that encompasses both the doubling measures and those satisfying the upper power bound $\mu(B(x,r)) \le Cr^d$. Our spaces are only assumed to satisfy the geometric doubling property: every ball of radius $r$ can be covered by at most $N$ balls of radius $r/2$. A key ingredient is the construction of random systems of dyadic cubes in such spaces.
\end{abstract}

\section{Introduction}

In the introduction to their celebrated paper on the non-homogeneous $Tb$ theorem \cite[p. 153]{NTV}, Nazarov, Treil and Volberg point out that ``a (more or less) complete theory of Calder\'on--Zygmund operators on non-homogeneous spaces -- -- can be developed in an abstract metric space with measure.'' Although a number of results for non-homogeneous singular integral operators (such as the weak-type $L^1$ inequality under a priori $L^2$ boundedness \cite{NTV:weak}, estimates on Lipschitz spaces \cite{GCG:CZO}, and a certain restricted version of the $T1$ theorem \cite{Bramanti}---essentially with $T1,T^*1\in L^{\infty}$, and only on bounded spaces) have been established in quite general metric measure spaces, it seems that the quoted remark has not been fully elaborated for the deeper aspects of the theory. The goal of this paper is to close this gap, and in fact obtain a new level of generality even in the context of $\R^n$. 

Let us describe our setting in more detail. We consider quasimetric spaces with the following well-established postulate, which we refer to as geometric doubling: every ball of radius $r$ can be covered by at most $N$ balls of radius $r/2$. This is essentially the original definition of ``a space of homogeneous nature'' by Coifman and Weiss \cite[pp.~66--67]{CW}, although this name is now commonly used for quasimetric spaces equipped with a doubling measure, the ``particularly important case'' pointed out by Coifman and Weiss immediately after their general definition. It is known that if a metric space is geometrically doubling and complete, then it also supports some doubling measures \cite{LS}; however, we do not assume completeness, and we regard our measure of interest as given by the problem at hand, and not as something that one is free to choose or construct.

We consider so-called upper doubling measures $\mu$, introduced in \cite{Hy}, which constitute a simultaneous generalization of doubling measures and those with the upper power bound property $\mu(B(x,r)) \leq Cr^d$, which are the ones usually considered in the literature on non-homogeneous analysis. But note that power bounded measures are only different, not more general than, the doubling measures. While the original motivation behind the notion of upper doubling in \cite{Hy} was to find a natural unified framework for the doubling and power bounded theories, it also encapsulates other examples of interest. Indeed, although it was not our specific goal, this general framework allows to essentially recapture the recent $T1$ theorem of Volberg and Wick \cite{VW} for ``Bergman-type'' operators.

We now discuss the general strategy and some new aspects of the proof. We follow the basic approach from \cite{NTV} and try to adapt the treatment of their most general cases into our situation. First, the assumption that $\mu$ is merely upper doubling causes for example the effect that the bounds for $\mu(B(x,r))$ depend not only on $r$ but also on $x$. We formulate the kernel estimates in a natural way adapted to this,
and carry out all the estimates with this extra generality. Second, the fact that we work in abstract quasimetric spaces complicates many things. However, note that parts of the relevant BMO and RBMO aspects of this theory were already dealt with in \cite{Hy}. 

A key ingredient behind the proof of Nazarov et al. \cite{NTV} is the random choice of a system of dyadic cubes, so that certain ``bad'' situations can be handled by arguing that their occurrence has only a small probability. It is then clear that we need something similar in an abstract quasimetric space. For a system of dyadic cubes as such, there is a well-known construction due to Christ \cite{Ch}, which serves as our starting point. Even then, it is not obvious how to choose a random system, since the randomization procedure of Nazarov et al. heavily relies on the action of the translation group on $\R^n$. Our solution to this problem, which is based on randomly choosing new ``centers'' for the dyadic cubes of generation $k$ among the old centers of the smaller cubes of generation $k+1$, appears to be new, and it may also be of independent interest, besides the present application to the $Tb$ theorem. In this respect, we note that a family of dyadic systems in a quasi-metric space, rather than just a fixed one, was already exploited by Verbitsky and Wheeden \cite{VeWh} in the context of weighted norm inequalities, but they required the underlying space to have a group structure, so that the new dyadic systems could still be obtained by simply shifting a given one, in analogy to the Euclidean setting.

We shall also employ a closely related construction of random almost-coverings of the space by balls of comparable radius, by which we mean that any given point has a small probability of not being covered. The need for this is related to the fact that, unlike in $\R^n$, it now seems far more natural to formulate notions like the weak boundedness property and the BMO space in terms of balls rather than cubes, and so we essentially need to cut our dyadic cubes into comparable balls when estimating the ``diagonal'' part of the operator.

In the following section, we give detailed statements of the results discussed here, which are then proven in the rest of the paper. In the final section, we describe the relation to the above-mentioned results of Volberg and Wick \cite{VW}.

\section{Preliminaries and the main result}
We now give the detailed definitions, fix some notations and parameters, and then formulate our main theorem, Theorem \ref{maintheorem}.

\subsection{Geometrically doubling regular quasimetric spaces}
Recall that a quasimetric is almost like a metric but the triangle inequality is replaced by the requirement that for some $A_0 \ge 1$ it holds
$\rho(x,y) \le A_0(\rho(x,z) + \rho(z,y))$ for all $x,y,z \in X$. 
A quasimetric space $(X, \rho)$ is geometrically doubling if every open ball $B(x,r) = \{y \in X: \rho(y,x) < r\}$ can be covered by at most $N$ balls of radius $r/2$.
We use this somewhat non-standard name to clearly differentiate this property from other types of doubling properties. We adapt the convention that a ball $B$ is equipped with a fixed
center $c_B \in X$ and radius $r_B > 0$ (if no other notation is at place, we use this). Also, we set $n = \log_2 N$, which can be viewed as (an upper bound for) a geometric dimension of the space. Let us state the following well-known lemma.

\begin{lem}
In a geometrically doubling quasimetric space, a ball $B(x,r)$ can contain the centers $x_i$ of at most $N\alpha^{-n}$ disjoint balls $B(x_i, \alpha r)$ for $\alpha \in (0,1]$.
\end{lem}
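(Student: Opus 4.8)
The plan is to convert the disjointness hypothesis into a separation estimate for the centers $x_i$, and then to bound the number of such separated points by covering $B(x,r)$ with small balls, each able to contain at most one center, where the cover is produced by iterating the geometric doubling property.

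I would first record that the centers are $\alpha r$-separated: if $i\ne j$ and $\rho(x_i,x_j)<\alpha r$, then $x_i$ belongs to $B(x_i,\alpha r)\cap B(x_j,\alpha r)$, contradicting disjointness, so $\rho(x_i,x_j)\ge \alpha r$ for all $i\ne j$. (This step uses only the definition of a ball, not the quasitriangle inequality.) Next I would note that it suffices to cover $B(x,r)$ by finitely many balls $B(y_\ell,s)$ each of which contains at most one $x_i$; the number of such balls is then an upper bound for the number of centers. Since two points of a ball $B(y,s)$ lie at distance less than $2A_0 s$ by the quasitriangle inequality, any $B(y,s)$ with $2A_0 s\le \alpha r$ can contain at most one $\alpha r$-separated point, so it is enough to use covering balls of radius $s\le \alpha r/(2A_0)$ (in a genuine metric space, $s\le \alpha r/2$).

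To produce such a cover, I would iterate geometric doubling: $B(x,r)$ is covered by $N$ balls of radius $r/2$, each of those by $N$ balls of radius $r/4$, and so on, so that after $k$ applications $B(x,r)$ is covered by at most $N^k$ balls of radius $2^{-k}r$. Taking $k$ to be the least integer with $2^{-k}r\le \alpha r/(2A_0)$ then gives at most $N^k$ centers. Finally I would rewrite this in the stated form using $n=\log_2 N$: since $2^k<4A_0/\alpha$ one has $N^k=2^{nk}<(4A_0)^n\alpha^{-n}$, and in the metric case ($A_0=1$) with $\alpha$ a negative power of $2$ this is exactly $N\alpha^{-n}$ (then $k=1+\log_2(1/\alpha)$), which is the clean form quoted; the mild dependence on $A_0$ and on rounding $\alpha$ to a power of $2$ is absorbed as elsewhere in the paper.

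There is no real obstacle here: the only points needing care are getting the radius threshold in the second step right — this is where the quasimetric constant $A_0$ enters and slightly perturbs the constant — and the final bookkeeping that matches the purely geometric count $N^k$ against the advertised $N\alpha^{-n}$, which comes out cleanly precisely because the geometric dimension is defined by $n=\log_2 N$.
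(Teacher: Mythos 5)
The paper does not actually prove this lemma (it is stated as ``well-known'' without proof), so your argument can only be judged on its own. Your strategy is the standard one and its individual steps are correct: disjointness does force $\rho(x_i,x_j)\ge\alpha r$ (the center of one ball would otherwise lie in the other), and iterating geometric doubling does produce, after $k$ steps, a cover of $B(x,r)$ by $N^k$ balls of radius $2^{-k}r$. The trouble is the final bookkeeping, which you yourself flag: requiring the covering balls to have diameter below the separation threshold costs you the quasimetric constant and a rounding factor, so what you actually prove is a bound of the form $(4A_0)^n\alpha^{-n}$ (and $N^2\alpha^{-n}$ even for an honest metric and generic $\alpha$), not the stated $N\alpha^{-n}$, which is uniform in $A_0$. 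Saying the discrepancy ``is absorbed as elsewhere in the paper'' is true for every application in this paper --- implied constants are allowed to depend on $N$ and $A_0$ --- but it means your proof establishes a weaker statement than the one quoted, not the lemma itself.

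The clean constant comes from a small twist that replaces your ``each small ball has diameter $<2A_0 s\le\alpha r$'' step and removes $A_0$ entirely: cover $B(x,r)$ by balls $B(y_\ell,\alpha r)$ of radius exactly $\alpha r$. Iterated doubling with $k$ the least integer satisfying $2^{-k}\le\alpha$ gives at most $N^k=2^{nk}\le 2^n\alpha^{-n}=N\alpha^{-n}$ such balls, since $2^{k-1}<1/\alpha$. Now if two distinct centers $x_i,x_j$ lay in the same $B(y_\ell,\alpha r)$, then by symmetry of $\rho$ the point $y_\ell$ would belong to both $B(x_i,\alpha r)$ and $B(x_j,\alpha r)$, contradicting disjointness; so each covering ball contains at most one center, and the count is at most $N\alpha^{-n}$ with no appeal to the quasi-triangle inequality. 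I recommend replacing your ``separation plus small diameter'' step by this observation; the rest of your write-up can stay as is.
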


Note also that there is a uniform constant depending only on $N$ and $A_0$ so that all subsets of $X$ are geometrically doubling with this constant. Choosing $N$ large enough in the first place, let us use the same constant $N$ everywhere.

For many purposes, quasimetrics are just as good as metrics, only somewhat more annoying to deal with due to the presence of the additional constant in the triangle inequality. However, for some of the more delicate estimates, it seems to us that general quasimetrics can be a bit too wild, and we always ask that our quasimetric $\rho$ satisfy the following regularity property: for every $\epsilon > 0$ there exists
$A(\epsilon) < \infty$ so that
\begin{displaymath}
  \rho(x,y) \le (1+\epsilon)\rho(x,z) + A(\epsilon)\rho(z, y).
\end{displaymath}
Notice that this property is in particular satisfied by all positive powers of an honest metric, and every quasimetric is equivalent to one of that form by a well-known  result of Mac\'ias and Segovia \cite{MS}. While it is easy to cook up irregular quasimetrics, it seems that practically all reasonable examples of quasimetrics from applications already satisfy the regularity property even without passing to an equivalent version. This is in particular the case for all the examples of quasi-metrics pointed out by Coifman and Weiss \cite[p.~68]{CW}.

Much of our subsequent assumptions will be essentially invariant under the change to an equivalent quasimetric, which we explicitly exploit through the mentioned result of Mac\'ias and Segovia, so that a large part of the proof can be carried out in an honest metric space. However, we want to use indicators of balls (of the given quasimetric) as test functions, and it is here that the general quasimetric balls seem to be somewhat too arbitrary for our purposes.

\subsection{Upper doubling measures}

A Borel measure $\mu$ in a quasimetric space $(X, \rho)$ is called upper doubling if there exists a dominating function $\lambda\colon X \times (0, \infty) \to (0,\infty)$ so that
$r \mapsto \lambda(x,r)$ is non-decreasing, $\lambda(x,2r) \le C_{\lambda}\lambda(x,r)$ and $\mu(B(x,r)) \le \lambda(x,r)$ for all $x \in X$ and $r > 0$. The number $d:=\log_2 C_{\lambda}$ can be thought of as (an upper bound for) a dimension of the measure $\mu$, and it will play a similar role as the quantity denoted by the same symbol in \cite{NTV}.

\begin{lem}\label{baest}
We have for every ball $B = B(c_B, r_B)$ and for every $\epsilon > 0$ that
\begin{displaymath}
\int_{X \setminus B} \frac{\rho(x,c_B)^{-\epsilon}}{\lambda(c_B, \rho(x,c_B))}\,d\mu(x) \le C_{\lambda}A_{\epsilon}r_B^{-\epsilon},
\end{displaymath}
where $A_{\epsilon} = 2^{\epsilon}/(2^{\epsilon}-1)$.
\end{lem}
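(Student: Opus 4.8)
The plan is to carry out a standard annular decomposition of the complement of the ball. First I would write
\[
X \setminus B = \bigcup_{k=0}^{\infty} A_k, \qquad A_k := B(c_B, 2^{k+1} r_B) \setminus B(c_B, 2^{k} r_B),
\]
which is a disjoint decomposition (up to sets of measure zero, or exactly, depending on which balls one takes open or closed — this is immaterial for the integral). On each annulus $A_k$ one has the pointwise bound $\rho(x, c_B) \ge 2^k r_B$, hence $\rho(x, c_B)^{-\epsilon} \le (2^k r_B)^{-\epsilon}$, and by the monotonicity of $r \mapsto \lambda(c_B, r)$ also $\lambda(c_B, \rho(x,c_B)) \ge \lambda(c_B, 2^k r_B)$. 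Therefore the integrand is at most $(2^k r_B)^{-\epsilon} / \lambda(c_B, 2^k r_B)$ on $A_k$.

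Next I would estimate the $\mu$-measure of $A_k$ by $\mu(B(c_B, 2^{k+1} r_B)) \le \lambda(c_B, 2^{k+1} r_B) \le C_{\lambda}\, \lambda(c_B, 2^k r_B)$, using in turn the defining inequality $\mu(B(x,r)) \le \lambda(x,r)$ and the doubling property $\lambda(x, 2r) \le C_{\lambda}\lambda(x,r)$ of the dominating function. Multiplying, the contribution of the $k$-th annulus is at most
\[
\frac{(2^k r_B)^{-\epsilon}}{\lambda(c_B, 2^k r_B)} \cdot C_{\lambda}\,\lambda(c_B, 2^k r_B) = C_{\lambda}\, (2^k r_B)^{-\epsilon},
\]
so that the $\lambda$-factors cancel and only a geometric factor in $k$ survives.

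Finally I would sum over $k \ge 0$:
\[
\int_{X \setminus B} \frac{\rho(x,c_B)^{-\epsilon}}{\lambda(c_B, \rho(x,c_B))}\,d\mu(x) \le C_{\lambda}\, r_B^{-\epsilon} \sum_{k=0}^{\infty} 2^{-k\epsilon} = C_{\lambda}\, r_B^{-\epsilon}\, \frac{1}{1 - 2^{-\epsilon}} = C_{\lambda} A_{\epsilon}\, r_B^{-\epsilon},
\]
which is exactly the claimed bound with $A_{\epsilon} = 2^{\epsilon}/(2^{\epsilon}-1)$. There is no real obstacle here; the only points requiring a moment's care are that one must use both structural properties of $\lambda$ (monotonicity to bound it from below on the annulus, and the doubling estimate to bound the measure of the annulus from above) so that the two occurrences of $\lambda$ cancel, and that the geometric series converges precisely because $\epsilon > 0$, producing the constant $A_\epsilon$.
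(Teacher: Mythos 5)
Your proof is correct and follows essentially the same route as the paper: decompose $X\setminus B$ into the dyadic annuli $\{2^j r_B \le \rho(x,c_B) < 2^{j+1} r_B\}$, use the monotonicity of $r\mapsto\lambda(c_B,r)$ and the bound $\mu(B(c_B,2^{j+1}r_B))\le\lambda(c_B,2^{j+1}r_B)\le C_\lambda\lambda(c_B,2^jr_B)$, and sum the resulting geometric series to get $C_\lambda A_\epsilon r_B^{-\epsilon}$. No gaps; the argument is complete as written.
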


\begin{proof}
We calculate
\begin{eqnarray*}
\int_{X \setminus B} \frac{\rho(x,c_B)^{-\epsilon}}{\lambda(c_B, \rho(x,c_B))}\,d\mu(x) & = & \sum_{j=0}^{\infty} \int_{2^jr_B \le \rho(x,c_B) < 2^{j+1}r_B} \frac{\rho(x,c_B)^{-\epsilon}}{\lambda(c_B, \rho(x,c_B))}\,d\mu(x) \\
& \le & \sum_{j=0}^{\infty} \frac{\mu(B(c_B, 2^{j+1}r_B))}{\lambda(c_B, 2^jr_B)}(2^jr_B)^{-\epsilon} \\
& \le & C_{\lambda}r_B^{-\epsilon}\sum_{j=0}^{\infty} 2^{-\epsilon j} 
 = C_{\lambda}A_{\epsilon}r_B^{-\epsilon},
\end{eqnarray*}
where we used that $\lambda$ is non-decreasing and $\mu(B(c_B, 2^{j+1}r_B)) \le \lambda(c_B, 2^{j+1}r_B) \le C_{\lambda}\lambda(c_B, 2^jr_B)$.
\end{proof}

In what follows, we work in a geometrically doubling  regular quasimetric space $(X,\rho)$ (with the constants $N$ and $n$ as above, and the related function $A(\epsilon)$), which is equipped with an upper doubling measure $\mu$ with the related majorant~$\lambda$.

\subsection{Standard kernels and Calder\'on--Zygmund operators}

Define $\Delta = \{(x,x): x \in X\}$. A standard kernel
is a mapping $K\colon X^2 \setminus \Delta \to \C$ for which we have for some $\alpha > 0$ and $C < \infty$ that 
\begin{displaymath}
|K(x,y)| \le C\min\Big(\frac{1}{\lambda(x, \rho(x,y))}, \frac{1}{\lambda(y, \rho(x,y))}\Big), \qquad x \ne y,
\end{displaymath}
\begin{displaymath}
|K(x,y) - K(x',y)| \le C\frac{\rho(x,x')^{\alpha}}{\rho(x,y)^{\alpha}\lambda(x, \rho(x,y))}, \qquad \rho(x,y) \ge C\rho(x,x'),
\end{displaymath}
and
\begin{displaymath}
|K(x,y) - K(x,y')| \le C\frac{\rho(y,y')^{\alpha}}{\rho(x,y)^{\alpha}\lambda(y, \rho(x,y))}, \qquad \rho(x,y) \ge C\rho(y,y').
\end{displaymath}
Let $T\colon f \mapsto Tf$ be a linear operator acting on some functions $f$ (which we shall specify in more detail later). It is called a Calder\'on--Zygmund operator with kernel $K$ if
\begin{displaymath}
Tf(x) = \int_X K(x,y)f(y)\,d\mu(y)
\end{displaymath}
for $x$ outside the support of $f$.

\subsection{Accretivity}
A function $b \in L^{\infty}(\mu)$ is called accretive if Re$\,b \ge a > 0$ almost everywhere. We can also make do with the following weaker form of accretivity:
$|\int_A b\,d\mu| \ge a\mu(A)$ for all Borel sets $A$ which satisfy the condition that $B \subset A \subset CB$ for some ball $B = B(A)$, where $C$ is some large
constant which depends on the quasimetric $\rho$. (One can e.g. take $C=500$ if dealing with metrics).

The point is to have the above estimate whenever $A$ is one of the ``cubes'' to be constructed below, but there is no easy explicit description of what kind of sets they actually are.

\subsection{Weak boundedness property}
An operator $T$ is said to satisfy the weak boundedness property if $|\langle T\chi_B, \chi_B\rangle| \le A\mu(\Lambda B)$ for all balls $B$ and for some fixed constants $A > 0$ and $\Lambda > 1$.
Here $\langle\cdot\,,\, \cdot \rangle$ is the bilinear duality $\langle f, g\rangle = \int fg\,d\mu$. Let us denote the smallest admissible constant above by $\|T\|_{WBP_{\Lambda}}$.

In the $Tb$ theorem, the weak boundedness property is demanded from the operator $M_{b_2}TM_{b_1}$, where
$b_1$ and $b_2$ are accretive functions and $M_b \colon f \mapsto bf$.

\subsection{BMO and RBMO}
We say that $f \in L^1_{\textrm{loc}}(\mu)$ belongs to BMO$^p_{\kappa}(\mu)$, if for any ball $B \subset X$ there
exists a constant $f_B$ such that
\begin{displaymath}
\Big( \int_B |f-f_B|^p\,d\mu\Big)^{1/p} \le L\mu(\kappa B)^{1/p},
\end{displaymath}
where the constant $L$ does not depend on $B$.

Let $\varrho > 1$. A function $f \in L^1_{\textrm{loc}}(\mu)$ belongs to RBMO$(\mu)$ if there exists a constant $L$, and for every ball $B$, a constant $f_B$, such that one has
\begin{displaymath}
\int_B |f-f_B| \,d\mu \le L\mu(\varrho B),
\end{displaymath}
and, whenever $B \subset B_1$ are two balls,
\begin{displaymath}
|f_B - f_{B_1}| \le L\Big(1 + \int_{2B_1 \setminus B} \frac{1}{\lambda(c_B, d(x,c_B))}\,d\mu(x)\Big).
\end{displaymath}
We do not demand that $f_B$ be the average $\langle f \rangle_B = \frac{1}{\mu(B)}\int_B f\,d\mu$, and this is actually important in the RBMO$(\mu)$-condition. The useful thing here is that the space RBMO$(\mu)$ is independent of the choice of parameter $\varrho > 1$ and satisfies the John--Nirenberg inequality. For these results in our setting, see \cite{Hy}.
The norms in these spaces are defined in the obvious way as the best constant $L$.

We do not really need the RBMO$(\mu)$ space here as we formulate our main theorem with respect to BMO$^2_{\kappa}(\mu)$ rather than BMO$^1_{\kappa}(\mu)$. However, some reductions are possible here, and we comment on this after the formulation of our main theorem.

\subsection{Vinogradov notation and implicit constants}
The notation $f \lesssim g$ is used synonymously with $f \le Gg$ for some constant $G$. We also use $f \sim g$ if $f \lesssim g \lesssim f$. We now specify on the parameters on which the  implied constant  $G$ is allowed to depend on in this notation.
We let it depend on $N$ and $A_0$, which are related to the space $X$, and on $C_{\lambda}$, which
is related to the measure $\mu$. Next, we let $D$ depend on $C$ and $\alpha$, the constants from the kernel estimates, and on the constants $A$ and $\Lambda$ related to the weak boundedness property. Then we let $G$
depend on $a$, the constant related to the accretivity assumption of the test functions involved, and on the $L^{\infty}$-norms of the same test functions. We also let $G$ depend on the BMO parameters. It is an inconvenient fact
that one needs so many constants. Several more auxiliary parameters will be build of the aforementioned ones. Also, there will be quite a few other parameters that are not swallowed by this notation.

\medskip

We now formulate our main theorem in full detail.

\begin{thm}\label{maintheorem}
Let $(X,\rho)$ be a geometrically doubling regular quasimetric space which is equipped with an upper doubling measure $\mu$.
Let $T$ be an $L^2(\mu)$-bounded Calder\'on--Zygmund operator with a standard kernel $K$,
let $b_1$ and $b_2$ be two accretive functions, and $\kappa,\Lambda > 1$. Then
\begin{equation*}
  \|T\|\lesssim \|Tb_1\|_{BMO^2_{\kappa}(\mu)}+\|T^*b_2\|_{BMO^2_{\kappa}(\mu)}
    +\|M_{b_2}TM_{b_1}\|_{WBP_{\Lambda}}+1,
\end{equation*}
where the first three terms on the right are in turn dominated by $\|T\|$.

Here all the estimates depend on the quasimetric space $(X,\rho)$ through the constants $N$, $A_0$ and the function $\epsilon \mapsto A(\epsilon)$,
the measure $\mu$ through the constant $C_{\lambda}$, the test functions $b_1$ and $b_2$ through
the $L^{\infty}$-norms and the accretivity constants, and the kernel $K$ through the constants $C$ and $\alpha$ appearing in the standard estimates.
\end{thm}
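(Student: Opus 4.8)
\emph{Overview.} The plan is to run the scheme of Nazarov, Treil and Volberg \cite{NTV}, with the random dyadic cubes and the random almost-coverings by balls constructed in this paper playing the role of the randomly translated lattices of $\R^n$. Since the asserted estimate is invariant, up to harmless constants, under passing to an equivalent quasimetric, one first invokes the Macías--Segovia theorem \cite{MS}, so that much of the argument can be carried out in an honest metric space; the regularity hypothesis on $\rho$ is exactly what makes this step harmless, and the original quasimetric balls reappear only as test functions. Next one reduces the main inequality to a bound $|\langle Tf,g\rangle|\lesssim\mathcal C\,\|f\|_{L^2(\mu)}\|g\|_{L^2(\mu)}$, where $\mathcal C$ is the right-hand side of the theorem, for $f,g$ in a dense class of bounded, boundedly supported functions; the a priori $L^2(\mu)$-boundedness of $T$ is used only to make the expansions below absolutely convergent and to permit a small error term to be absorbed at the end, and it does not enter the final constant. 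The reverse inequalities, that each of $\|Tb_1\|_{BMO^2_\kappa(\mu)}$, $\|T^*b_2\|_{BMO^2_\kappa(\mu)}$, $\|M_{b_2}TM_{b_1}\|_{WBP_\Lambda}$ is $\lesssim\|T\|$, are soft: one tests $T$ (resp.\ $T^*$, resp.\ $M_{b_2}TM_{b_1}$) on $b_i$ times the indicator of a suitable dilate $B'$ of a ball $B$, bounding that piece by $\|T\|\,\mu(B')^{1/2}$, and on the complement, which is controlled by the kernel estimate together with Lemma \ref{baest}.

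\emph{Martingale expansion.} Fix a realization of the random dyadic system $\mathcal D=\bigcup_k\mathcal D_k$. For the accretive $b_1$ introduce the $b_1$-adapted expectation $E_Q^{b_1}f=\big(\int_Q b_1\,d\mu\big)^{-1}\big(\int_Q f\,d\mu\big)\,b_1\chi_Q$ and the difference $\Delta_Q^{b_1}f=\sum_{Q'\in\mathrm{ch}(Q)}E_{Q'}^{b_1}f-E_Q^{b_1}f$, arranged so that $\sum_Q\Delta_Q^{b_1}=I$ on $L^2(\mu)$ (the resolution being interpreted, in the absence of a largest cube, via a limiting argument over increasing families of cubes as in \cite{NTV}), each $\Delta_Q^{b_1}f$ supported on $Q$, a scalar multiple of $b_1$ on every child of $Q$, and with $\int_Q\Delta_Q^{b_1}f\,d\mu=0$; here the weak accretivity of $b_1$ is precisely what bounds the denominators below by $|\int_Q b_1\,d\mu|\gtrsim\mu(Q)$. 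After the analogous construction for $b_2$ and $T^*$, we expand
\begin{equation*}
  \langle Tf,g\rangle=\sum_{Q,R\in\mathcal D}\langle T\,\Delta_Q^{b_1}f,\ \Delta_R^{b_2}g\rangle ,
\end{equation*}
and by the $T\leftrightarrow T^*$ symmetry it suffices to sum over pairs in which $\diam R\le\diam Q$.

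\emph{Splitting the bilinear form.} Decompose the double sum according to the relative position of $Q$ and $R$: (i) \emph{separated} pairs, where $\rho(Q,R)$ is large compared with $\diam Q$; (ii) \emph{bad} pairs, where the smaller cube $R$ sits deep inside $Q$ but within a thin neighbourhood of $\partial Q$; (iii) \emph{good nested} pairs, $R$ well inside $Q$ and far from $\partial Q$; and (iv) \emph{diagonal} pairs, of comparable scale and close together. For (i), the two Hölder estimates of the kernel, the $\mu$-mean-zero of $\Delta_Q^{b_1}f$ and $\Delta_R^{b_2}g$, and Lemma \ref{baest}---which is exactly the tool that absorbs the $x$-dependence of the majorant $\lambda$---give, after summing geometric series in the scale gap and a Schur/Cauchy--Schwarz argument, a bound $\lesssim\|f\|_2\|g\|_2$. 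For (iii) one writes $\Delta_Q^{b_1}f$ on the child of $Q$ containing $R$ as a multiple of $b_1$ there, the complementary children being handled as in (i), and Taylor-expands the kernel against the mean-zero factor $\Delta_R^{b_2}g$; the resulting paraproduct remainder, in which $Tb_1$ (resp.\ $T^*b_2$) is tested against the cubes, is exactly where $\|Tb_1\|_{BMO^2_\kappa(\mu)}$ and $\|T^*b_2\|_{BMO^2_\kappa(\mu)}$ enter, by way of a Carleson-embedding/John--Nirenberg estimate for this BMO valid for upper doubling $\mu$ (cf.\ \cite{Hy}). For (iv), cubes cannot be used directly: we chop the comparable adjacent cubes into balls of comparable radius via the random almost-covering by balls and invoke the weak boundedness property of $M_{b_2}TM_{b_1}$ on these balls, the uncovered remainder being again controlled, in expectation, by its small probability.

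\emph{Removing the bad cubes, and the main obstacle.} The term (ii) is eliminated probabilistically: over the randomness of $\mathcal D$, a cube of generation $k$ is ``bad at depth $m$'' with probability $\le c\,2^{-\delta m}$ for some $\delta>0$, the decisive point being that this event depends only on the random choices at the coarser scales $<k$, which are independent of the fine structure of the cube itself; hence $\mathbb E\,|\mathrm{(ii)}|\le\varepsilon(m)\,\|T\|\,\|f\|_2\|g\|_2$ with $\varepsilon(m)\to0$. Since $\langle Tf,g\rangle$ does not depend on $\mathcal D$, we take expectations throughout, absorb (ii) using the a priori finiteness of $\|T\|$, fix $m$ so that $\varepsilon(m)$ is small, and read off $|\langle Tf,g\rangle|\lesssim\mathcal C\,\|f\|_2\|g\|_2$. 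The genuine obstacle, past the substantial bookkeeping forced by the $x$-dependent $\lambda$ and by working with quasimetric balls, is this probabilistic step: one must show that the new center-reselection randomization of the dyadic cubes really does spread their boundaries around, so that the boundary neighbourhoods entering (ii) are uniformly thin in $\mu$-measure---this is the estimate that replaces the translation-invariance argument of \cite{NTV}, and it is here that the geometric doubling of $X$ and the doubling property of $\lambda$ are both brought to bear.
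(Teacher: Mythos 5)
Your high-level scheme is the same as the paper's: the Mac\'ias--Segovia reduction to a metric, $b$-adapted martingale expansions of $f$ and $g$, the splitting into separated, nested-good, nested-bad and comparable/adjacent pairs, the paraproduct carrying the two BMO norms via a Carleson embedding, the random almost-covering by $\rho$-balls feeding the weak boundedness property on the diagonal, and the probabilistic removal of bad cubes with absorption using the a priori finiteness of $\|T\|$. The genuine gap is that the objects on which all of this rests are never produced: you invoke ``the random dyadic cubes and the random almost-coverings by balls constructed in this paper'' and then declare the decisive probabilistic estimate to be ``the genuine obstacle'' that ``one must show''. But that is exactly the content that has to be supplied. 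The proof requires (a) nested systems of ``half-open'' cubes exactly partitioning $X$ at every scale, built from a Christ-type construction using geometric doubling only (Section 4); (b) a randomization of these cubes by re-selecting centres for which a fixed point lies within $\epsilon\delta^k$ of the boundary of a generation-$k$ cube with probability $\lesssim\epsilon^{\eta}$ (Lemma \ref{boundarylemma}), whence a fixed cube is bad with probability $\lesssim\delta^{r\gamma\eta}$ (Theorem \ref{bcubes}); and (c) a random family of $\rho$-balls with bounded cardinality, mutual separation, $\Lambda$-dilates contained in $Q\cap R$, and covering any fixed point with probability $\ge 1-\upsilon$ (Proposition \ref{rball}) --- the latter being indispensable because the weak boundedness property is assumed only for balls of the original quasimetric and does not transfer to the $d$-balls of the Mac\'ias--Segovia metric. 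Without these constructions and estimates the argument reduces to ``run NTV'', which is precisely what does not yet exist in this setting.

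Two further points. First, you expand both $f$ and $g$ along a single random grid and measure badness of a cube against coarser cubes of the same grid; the paper instead uses two independent grids $\mathcal{D}$ (carrying $b_1$) and $\mathcal{D}'$ (carrying $b_2$), defines goodness of $Q\in\mathcal{D}$ relative to $\mathcal{D}'$, and takes the probability in Theorem \ref{bcubes} over the independent grid. A one-grid variant is plausibly workable with the centre-reselection randomization, since $Q^k_{\alpha}$ is determined by the choices at levels $\ge k$ while the relevant thin boundary layers of its coarser ancestors are governed, conditionally on the finer levels, by the choices at intermediate coarser levels (this is the conditioning in the proof of Lemma \ref{boundarylemma}); but this conditional independence --- also needed to decouple badness from $\Delta^{b_1}_Q f$ when estimating $\mathbb{E}\|f_{\mathrm{bad}}\|_{L^2(\mu)}^2$ --- is an additional claim you assert rather than verify. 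Second, your closing formulation that the randomization makes the boundary neighbourhoods ``uniformly thin in $\mu$-measure'' is not the statement that can be proved: for a general upper doubling $\mu$ there is no deterministic bound of the form $\mu(\delta_Q)\lesssim\epsilon^{\eta}\mu(Q)$. What the argument uses, and all that is true, is the pointwise probabilistic bound $\mathbb{E}\,\chi_{Q_b}(x)\lesssim\epsilon^{\eta}\chi_Q(x)$ together with $\mathbb{P}(Q\in\mathcal{D}_{\mathrm{bad}})\lesssim\delta^{r\gamma\eta}$, after which one fixes favourable grids and absorbs the $\|T\|$-terms as in the synthesis.
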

\begin{rem}
\begin{itemize}
\item[(i)]
We have made the assumption that $T$ is a priori known to be a bounded operator on $L^2(\mu)$, but
one can reduce to this situation in several well-known ways (and sometimes no reduction is necessary). In what follows, we shall take the two BMO-norms and weak boundedness constant to be $1$, and  show that $\|T\| \lesssim 1$ with the conventions agreed upon above. It will be clear from the proof that the dependence on the mentioned quantities is of the asserted form.
The converse direction of the theorem is standard.
\item[(ii)]
At least in the case that $\rho$ is an honest metric, one can work with the larger space BMO$^1_{\kappa}(\mu)$ too; see \cite{NTV} and \cite{Hy}. Here one passes through the RBMO$(\mu)$ space, uses the John--Nirenberg inequality there, and then returns to the BMO$^2_{\kappa}(\mu)$ setting. In other words, $Tb_1 \in \textrm{BMO}^1_{\kappa}(\mu)$ implies that actually $Tb_1 \in \textrm{BMO}^2_{\kappa}(\mu)$. (This utilizes the fact that $Tb_1$ is in the range of the Calder\'on--Zygmund operator $T$ e.g.\ via the weak boundedness property; nothing of this sort holds for a general $f\in\textrm{BMO}_{\kappa}^1(\mu)$).
\item[(iii)]
As will be explained in the following section, all assumptions except possibly for the weak boundedness property are stable under the change to an equivalent quasimetric. Hence the only place were the regularity assumption on $\rho$ plays a role is in the balls involved in the weak boundedness property. We do not know whether the theorem remains true if the weak boundedness property is assumed with respect to balls of an irregular quasimetric. On the other hand, since these balls could be quite wild, it is questionable if such a testing condition would even be very useful.
\end{itemize}
\end{rem}

\section{Reduction to metric spaces}

A result of Mac\'ias and Segovia (see the proof of \cite[Theorem~2]{MS}) implies that there is a metric $d$ and a constant $\beta \ge 1$ such that
$2^{-\beta}d(x,y)^{\beta} \le \rho(x,y) \le 4^{\beta}d(x,y)^{\beta}$ for all $x,y \in X$. Here $3A_0^2 = 2^{\beta}$.
One checks that $(X,d)$ is then geometrically doubling with the constant $N16^{\beta n}$. Choosing $N$ large enough in the first place
lets us again use the same constant for both spaces. The measure $\mu$ is also upper doubling with respect to $d$ with the function
$(x,r) \mapsto \lambda(x, 4^{\beta}r^{\beta})$. As we no longer have any use for the original $\lambda$, we replace it with this one.

It also follows that $Tb_1$ and $T^*b_2$ belong to the space BMO$^{2}_{\kappa}(\mu)$ with respect to $d$-balls, if we just simply replace the original $\kappa$ by
$8\kappa^{1/\beta}$. Of course, if the accretivity is assumed in the form Re$\, b \ge a > 0$, this requires no modifications. If we assume it in the weaker form, one sees that we can e.g. take
$C = 4000^{\beta}$. Then we have that if $B_d \subset A \subset 500B_d$ for some $d$-ball $B_d$, then $B_{\rho} \subset A \subset CB_{\rho}$ for some $\rho$-ball. The proof that follows shows
that the constant $500$ works, as certain dyadic cubes with respect to $d$ are sets of this form.

There seems to be no easy way to immediately conclude the weak boundedness property also for the $d$-balls. Thus, we shall not even attempt anything of this sort. Instead, we shall explicitly use the quasimetric $\rho$ in a certain random ball covering, and as the reader will see, this circumvents the problem.

It remains to speak about the kernel estimates with respect to $d$ and the new $\lambda$ (which works for $d$-balls). An easy calculation using the facts that $\lambda$ is non-decreasing and doubling shows
that the first kernel estimate holds, just with a larger constant $C$. The rest of the kernel estimates also hold, just with the original $\alpha$ replaced by $\beta\alpha$ and demanding $C$ to be large enough.
This ends our reduction. As stated, for the most part we from now on deal with just the metric space $(X,d)$, and use the original $\rho$ only in one carefully indicated place.

\section{Dyadic systems of cubes}

We now provide a variant of Christ's \cite{Ch} construction of dyadic cubes in a metric space. His original result was formulated assuming the presence of a doubling measure, but most of the argument actually employs geometric doubling only. However, Christ only proved the covering property of his cubes in an a.e.~sense with respect to the doubling reference measure. We want to avoid this, which leads us to the construction of a system of ``half-open'' cubes, which exactly partition the whole space at every length scale, just like in $\R^n$.

The construction involves a parameter $\delta \le 1/1000$. For each $k \in \Z$ we are given a collection of points $x^k_{\alpha}$ for which
$d(x^k_{\alpha}, x^k_{\beta}) \ge \delta^k/8$ for all $\alpha \ne \beta$ and $\min_{\alpha} d(x, x^k_{\alpha}) < 4\delta^k$ for every $x \in X$. The parameters $1/8$ and $4$ are used simply because they will do in a certain randomization procedure of points (see Sec.~\ref{sec:randomDyadic}).

Let us construct a certain transitive relation $\le$ among the pairs $(k,\alpha)$, following \cite{Ch}. For each $(k, \alpha)$ there exists at least one $\beta$ for which $d(x^k_{\alpha},x^{k-1}_{\beta}) < 4\delta^{k-1}$. Also, there exists
at most one $\beta$ for which $d(x^k_{\alpha}, x^{k-1}_{\beta}) < \delta^{k-1}/16$. The ordering $\le$ is constructed using the rules we now describe. Consider any pair $(k, \alpha)$. Check first whether there exists $\beta$ so that
$d(x^k_{\alpha}, x^{k-1}_{\beta}) < \delta^{k-1}/16$. If so, set $(k, \alpha) \le (k-1,\beta)$ and $(k, \alpha) \not \le (k-1, \gamma)$ for $\gamma \ne \beta$. Otherwise, choose any $\beta$ for which
$d(x^k_{\alpha},x^{k-1}_{\beta}) < 4\delta^{k-1}$, and set $(k, \alpha) \le (k-1,\beta)$ and $(k, \alpha) \not \le (k-1, \gamma)$ for $\gamma \ne \beta$. Extend by transitivity to obtain a partial ordering.

The dyadic cubes of Christ are defined by
\begin{displaymath}
Q^k_{\alpha} = \bigcup_{(\ell, \beta) \le (k, \alpha)} B(x^{\ell}_{\beta}, \delta^{\ell}/100).
\end{displaymath}
However, we aim to replace them by the ``half-open'' cubes advertised before.

One can easily check that $\overline{Q}^k_{\alpha} \subset B(x^k_{\alpha}, 5\delta^k)$. Also, we have the property that if $\ell \ge k$, then either $Q^{\ell}_{\beta} \subset Q^k_{\alpha}$ or $Q^{\ell}_{\beta} \cap \overline{Q}^k_{\alpha} = \emptyset$ --
these still follow more or less as in \cite{Ch}. We now state and prove a number of lemmata relevant to our modification.

\begin{lem}\label{fullcoverbyclosures}
For all $k \in \Z$, the closures $\overline{Q}^k_{\alpha}$ cover $X$.
\end{lem}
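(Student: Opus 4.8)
The plan is to fix $k\in\Z$ and an arbitrary point $x\in X$, and to exhibit a single index $\alpha$ with $x\in\overline{Q}^k_\alpha$ by combining the covering property of the point families $\{x^\ell_\beta\}$ at the finer scales $\ell\ge k$ with a pigeonhole argument. Note that Christ's original argument obtained the covering property only almost everywhere, because it spent the reference doubling measure; here there is no measure available for this, so the required finiteness input must come purely from geometric doubling.

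First I would, for each $\ell\ge k$, pick an index $\beta_\ell$ with $d(x,x^\ell_{\beta_\ell})<4\delta^\ell$, which is possible since $\min_\gamma d(x,x^\ell_\gamma)<4\delta^\ell$ by assumption on the point families. By the construction of the order relation, every pair $(\ell,\beta)$ is $\le$ to exactly one pair at level $\ell-1$; iterating this ``parent'' map down to level $k$ yields a well-defined ancestor $(k,\alpha_\ell)$ of $(\ell,\beta_\ell)$, and then $x^\ell_{\beta_\ell}\in B(x^\ell_{\beta_\ell},\delta^\ell/100)\subset Q^k_{\alpha_\ell}$ straight from the definition of $Q^k_{\alpha_\ell}$.

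Next I would check that the set $\{\alpha_\ell:\ell\ge k\}$ is finite. Since $x^\ell_{\beta_\ell}\in Q^k_{\alpha_\ell}\subset\overline{Q}^k_{\alpha_\ell}\subset B(x^k_{\alpha_\ell},5\delta^k)$, we get $d(x,x^k_{\alpha_\ell})\le d(x,x^\ell_{\beta_\ell})+d(x^\ell_{\beta_\ell},x^k_{\alpha_\ell})<4\delta^\ell+5\delta^k\le 9\delta^k$, using $\ell\ge k$ and $\delta<1$. Hence all the centers $x^k_{\alpha_\ell}$ lie in $B(x,9\delta^k)$, and as they are $\delta^k/8$-separated, the balls $B(x^k_{\alpha_\ell},\delta^k/16)$ are pairwise disjoint with centers in $B(x,9\delta^k)$; the lemma on the number of centers of disjoint balls (stated in Section 2) bounds their number by a constant depending only on $n$. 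By the pigeonhole principle some value $\alpha$ then equals $\alpha_\ell$ for infinitely many $\ell$, say along $\ell_j\to\infty$. Each $x^{\ell_j}_{\beta_{\ell_j}}$ lies in $Q^k_\alpha$ and $d(x,x^{\ell_j}_{\beta_{\ell_j}})<4\delta^{\ell_j}\to 0$, so $x$ lies in the closure $\overline{Q}^k_\alpha$, which proves the lemma. I do not anticipate a genuine obstacle; the one step requiring care is the finiteness of $\{\alpha_\ell\}$, and this is precisely where geometric doubling substitutes for Christ's measure-theoretic argument.
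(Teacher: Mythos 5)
Your proof is correct and follows essentially the same route as the paper's: choose nearby centers $x^\ell_{\beta_\ell}$ at every finer scale $\ell\ge k$, note each lies in some level-$k$ cube whose center is within $9\delta^k$ of $x$, invoke geometric doubling to see only finitely many level-$k$ indices can occur, and pass to a subsequence to place $x$ in one closure $\overline{Q}^k_\alpha$. The only difference is that you spell out explicitly (via the ancestor chain and the defining union of balls) why $x^\ell_{\beta_\ell}\in Q^k_{\alpha_\ell}$, a step the paper leaves implicit.
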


\begin{proof}
Let $x \in X$ be arbitrary. For each $m \ge k$ we find some $x^m_{\beta} =: x_m$ so that $d(x_m, x) < 4\delta^m$. This especially implies that $x_m \to x$.
We know that $x_m \in Q^k_{\alpha(m)}$ for some $\alpha(m)$. We have $d(x^k_{\alpha(m)}, x) \le d(x^k_{\alpha(m)}, x_m) + d(x_m, x) <  9\delta^k$.
As $X$ is geometrically doubling, this implies that $\alpha(m)$ can take only finitely many values. In particular, one finds
an infinite subsequence with $x_m \in Q^k_{\alpha}$ for some fixed $\alpha$, and so $x \in \overline{Q}^k_{\alpha}$.
\end{proof}
We then note that as the collection $(\overline{Q}^k_{\alpha})_{\alpha}$ is locally finite, any union of them is closed. Let us then define the open dyadic cubes
\begin{displaymath}
\tilde Q^k_{\alpha} = X \setminus \bigcup_{\beta \ne \alpha} \overline{Q}^k_{\beta},
\end{displaymath}
and note that, by what we have already seen, there holds $Q^k_{\alpha} \subset \tilde Q^k_{\alpha} \subset \overline{Q}^k_{\alpha}$.
\begin{lem}
If $\ell \ge k$, we have
\begin{displaymath}
\overline{Q}^k_{\alpha} = \bigcup_{\sigma:\,(\ell, \sigma) \le (k, \alpha)} \overline{Q}^{\ell}_{\sigma}.
\end{displaymath}
\end{lem}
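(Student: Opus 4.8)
**Proof plan for the statement $\overline{Q}^k_{\alpha} = \bigcup_{\sigma:\,(\ell,\sigma)\le(k,\alpha)} \overline{Q}^{\ell}_{\sigma}$ (for $\ell\ge k$).**

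The strategy is to prove the identity with the half-open cubes first — namely $Q^k_\alpha = \bigcup_{(\ell,\sigma)\le(k,\alpha)} Q^\ell_\sigma$ — and then take closures, using local finiteness to control the closure of an infinite union. The "$\subset$"-containment $Q^k_\alpha \supset Q^\ell_\sigma$ whenever $(\ell,\sigma)\le(k,\alpha)$ is immediate from the very definition $Q^k_\alpha = \bigcup_{(m,\beta)\le(k,\alpha)} B(x^m_\beta, \delta^m/100)$ together with transitivity of $\le$, so one inclusion of the display is free. For the reverse inclusion $Q^k_\alpha \subset \bigcup_\sigma Q^\ell_\sigma$, I would argue: if $x\in Q^k_\alpha$ then $x\in B(x^m_\beta,\delta^m/100)$ for some $(m,\beta)\le(k,\alpha)$; by Lemma~\ref{fullcoverbyclosures} (applied at scale $\ell$) there is some $\sigma$ with $x\in\overline{Q}^\ell_\sigma$, and I want to upgrade this to $x\in Q^\ell_\sigma$ for an appropriate $\sigma$ with $(\ell,\sigma)\le(k,\alpha)$. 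Concretely, pick the $\sigma$ for which $(m,\beta)\le(\ell,\sigma)$ (this $\sigma$ exists and is unique by transitivity applied along the chain from level $m$ down to level $\ell$, since $m\ge\ell\ge k$); then $x\in B(x^m_\beta,\delta^m/100)\subset Q^\ell_\sigma$ and, chaining the orderings, $(m,\beta)\le(\ell,\sigma)\le(k,\alpha)$, so $(\ell,\sigma)\le(k,\alpha)$ as required.

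Having established $Q^k_\alpha = \bigcup_{(\ell,\sigma)\le(k,\alpha)} Q^\ell_\sigma$, I pass to closures. The inclusion $\overline{Q}^k_\alpha \supset \overline{Q}^\ell_\sigma$ for each admissible $\sigma$ is clear, hence $\overline{Q}^k_\alpha \supset \bigcup_\sigma \overline{Q}^\ell_\sigma$. For the other direction, note that the family $(\overline{Q}^\ell_\sigma)_\sigma$ is locally finite (each $\overline{Q}^\ell_\sigma\subset B(x^\ell_\sigma,5\delta^\ell)$, the centers $x^\ell_\sigma$ are $\delta^\ell/8$-separated, and geometric doubling bounds how many such balls meet any bounded set), so a union of closures $\bigcup_\sigma \overline{Q}^\ell_\sigma$ over an arbitrary index set $\sigma$ with $(\ell,\sigma)\le(k,\alpha)$ is itself closed. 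Since this closed set contains $\bigcup_\sigma Q^\ell_\sigma = Q^k_\alpha$, it contains $\overline{Q}^k_\alpha$, giving the reverse inclusion and hence equality.

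The main obstacle is the step that upgrades membership in $\overline{Q}^\ell_\sigma$ to membership in some $Q^\ell_{\sigma'}$ with the correct ordering relation — i.e. making sure that when we "re-sort" a point $x\in Q^k_\alpha$ through the intermediate scale $\ell$, the chain of $\le$-relations actually closes up to $(\ell,\sigma)\le(k,\alpha)$, rather than landing in some $\overline{Q}^\ell_{\sigma'}$ with $(\ell,\sigma')\not\le(k,\alpha)$ at a boundary point. This is handled cleanly by working with the genuinely-smaller balls $B(x^m_\beta,\delta^m/100)$ (which sit inside the open half-open cubes and respect $\le$ exactly) rather than with closures, and only taking closures at the very end via the local finiteness argument; this is precisely why proving the half-open version first, and not directly manipulating $\overline{Q}^k_\alpha$, is the right order of operations. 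A minor secondary point is verifying local finiteness of $(\overline{Q}^\ell_\sigma)_\sigma$, but this is a routine consequence of the containment in $5\delta^\ell$-balls, the separation of centers, and Lemma~\ref{...} on disjoint balls in a geometrically doubling space.
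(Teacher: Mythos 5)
There is a genuine gap, and it sits exactly at the step you yourself flagged as the main obstacle. Your plan reduces the statement to the open-cube identity $Q^k_\alpha=\bigcup_{(\ell,\sigma)\le(k,\alpha)}Q^\ell_\sigma$, but your argument for the inclusion ``$\subset$'' only treats the balls $B(x^m_\beta,\delta^m/100)$ with $m\ge\ell$: for those one can indeed take the unique $(\ell,\sigma)$ on the chain with $(m,\beta)\le(\ell,\sigma)\le(k,\alpha)$ and conclude $B(x^m_\beta,\delta^m/100)\subset Q^\ell_\sigma$. However, the definition $Q^k_\alpha=\bigcup_{(m,\beta)\le(k,\alpha)}B(x^m_\beta,\delta^m/100)$ also contains balls at the intermediate levels $k\le m<\ell$, and for those there is no $(\ell,\sigma)$ with $(m,\beta)\le(\ell,\sigma)$ (the order only goes from finer to coarser), nor can such a ball in general sit inside a single level-$\ell$ cube (its radius $\delta^m/100$ is at least $10\delta^\ell\ge\operatorname{diam}\overline{Q}^\ell_\sigma$). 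Worse, these balls need not be contained in $\bigcup_\sigma Q^\ell_\sigma$ at all: the open cubes of a fixed generation do not cover $X$ --- only their closures do (Lemma \ref{fullcoverbyclosures}) --- so a point of such a ball lying on the common boundary of two level-$\ell$ cubes belongs to $Q^k_\alpha$ but to no $Q^\ell_\sigma$. Hence the intermediate identity you propose to prove first is false in general, and the reduction collapses.

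The paper's proof keeps precisely these problematic balls explicit: it splits the defining union as $Q^k_\alpha=\bigcup_{(\ell,\sigma)\le(k,\alpha)}Q^\ell_\sigma\cup\bigcup_{(m,\sigma)\le(k,\alpha),\,m<\ell}B(x^m_\sigma,\delta^m/100)$, takes closures (the extra union is finite, so closure distributes --- this plays the role of your local-finiteness remark), and then shows that each closed intermediate ball $\overline{B}(x^m_\sigma,\delta^m/100)$ lies in $\bigcup_{(\ell,\beta)\le(k,\alpha)}\overline{Q}^{\ell}_{\beta}$. That last step is the content your plan is missing: given $x$ in such a ball, one covers it by some $\overline{Q}^\ell_\beta$ via Lemma \ref{fullcoverbyclosures} and must then check $(\ell,\beta)\le(k,\alpha)$. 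This is done quantitatively: for the level-$(m+1)$ ancestor $(m+1,\gamma)$ of $(\ell,\beta)$ one estimates $d(x^m_\sigma,x^{m+1}_\gamma)\le\delta^m/100+5\delta^\ell+5\delta^{m+1}<\delta^m/16$, which by the $\delta^{m}/16$ uniqueness rule in the construction of $\le$ forces $(m+1,\gamma)\le(m,\sigma)$ and so closes the chain $(\ell,\beta)\le(m+1,\gamma)\le(m,\sigma)\le(k,\alpha)$. Without an argument of this type, working with closures and that distance estimate, the chain of orderings need not close up --- which is exactly the failure mode your re-sorting step does not avert. (A small terminological point: the $Q^k_\alpha$ in this lemma are Christ's open cubes, not the ``half-open'' cubes $\hat{Q}^k_\alpha$, which are only constructed afterwards.)
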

\begin{proof}
As this is obvious for $\ell = k$, we take $\ell > k$. Write
\begin{displaymath}
Q^k_{\alpha} = \bigcup_{\sigma:\,(\ell, \sigma) \le (k, \alpha)} Q^{\ell}_{\sigma} \cup \mathop{\bigcup_{(m, \sigma) \le (k, \alpha)}}_{m < \ell} B(x^m_{\sigma}, \delta^m/100).
\end{displaymath}
As the union on the right-hand side is finite, we have
\begin{displaymath}
\overline{Q}^k_{\alpha} = \bigcup_{\sigma:\,(\ell, \sigma) \le (k, \alpha)} \overline{Q}^{\ell}_{\sigma} \cup \mathop{\bigcup_{(m, \sigma) \le (k, \alpha)}}_{m < \ell} \overline{B}(x^m_{\sigma}, \delta^m/100).
\end{displaymath}
Thus, it suffices to prove that
\begin{displaymath}
\mathop{\bigcup_{(m, \sigma) \le (k, \alpha)}}_{m < \ell} \overline{B}(x^m_{\sigma}, \delta^m/100) \subset \bigcup_{\sigma:\,(\ell, \sigma) \le (k, \alpha)} \overline{Q}^{\ell}_{\sigma}.
\end{displaymath}
Fix some $(m, \sigma) \le (k, \alpha)$, where $m < \ell$, and consider a point $x \in \overline{B}(x^m_{\sigma}, \delta^m/100)$. Since the sets $(\overline{Q}^{\ell}_{\beta})_{\beta}$ cover the whole $X$ by Lemma \ref{fullcoverbyclosures},
there is some $\beta$ for which $x \in \overline{Q}^{\ell}_{\beta}$.

It remains to show that $(\ell, \beta) \le (k, \alpha)$. Fix the $\gamma$ for which $(\ell, \beta) \le (m+1, \gamma)$. We have
\begin{align*}
d(x^m_{\sigma}, x^{m+1}_{\gamma}) &\le d(x^m_{\sigma}, x) + d(x, x^{\ell}_{\beta}) + d(x^{\ell}_{\beta}, x^{m+1}_{\gamma}) \\
&\le \delta^m/100 + 5\delta^{\ell} + 5\delta^{m+1} < \delta^m/50 < \delta^m/16
\end{align*}
proving that $(m+1, \gamma) \le (m, \sigma)$. Since $(\ell, \beta) \le (m+1, \gamma)$ and $(m, \sigma) \le (k,\alpha)$, it holds $(\ell, \beta) \le (k,\alpha)$.
\end{proof}
\begin{lem}
If $\ell \ge k$, then either $\tilde Q^{\ell}_{\beta} \subset \tilde Q^k_{\alpha}$, or else $\tilde Q^{\ell}_{\beta} \cap \overline{Q}^k_{\alpha} = \emptyset$.
\end{lem}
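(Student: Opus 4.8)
The plan is to reduce the whole statement to two elementary set manipulations, built on the previous lemma (which writes $\overline{Q}^k_{\alpha}$ as the union of the closures $\overline{Q}^{\ell}_{\sigma}$ over all $\sigma$ with $(\ell,\sigma)\le(k,\alpha)$) together with one combinatorial fact about the ordering: by the very construction of $\le$, every pair $(\ell,\beta)$ is assigned exactly one parent one level up, hence by transitivity it has a \emph{unique} ancestor at each level $k\le\ell$, i.e.\ there is exactly one $\gamma$ with $(\ell,\beta)\le(k,\gamma)$. Consequently the index sets $\{\sigma:(\ell,\sigma)\le(k,\gamma)\}$ are pairwise disjoint as $\gamma$ ranges over the level-$k$ indices. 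The starting point is then the dichotomy: either $(\ell,\beta)\le(k,\alpha)$, or else the unique level-$k$ ancestor of $(\ell,\beta)$ is some $\gamma\ne\alpha$, which is precisely the case $(\ell,\beta)\not\le(k,\alpha)$.

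In the first case I would prove $\tilde Q^{\ell}_{\beta}\subset\tilde Q^k_{\alpha}$ by passing to complements: since $\tilde Q^{\ell}_{\beta}=X\setminus\bigcup_{\sigma\ne\beta}\overline{Q}^{\ell}_{\sigma}$ and $\tilde Q^k_{\alpha}=X\setminus\bigcup_{\gamma\ne\alpha}\overline{Q}^k_{\gamma}$, it suffices to show $\bigcup_{\gamma\ne\alpha}\overline{Q}^k_{\gamma}\subset\bigcup_{\sigma\ne\beta}\overline{Q}^{\ell}_{\sigma}$. Expanding each $\overline{Q}^k_{\gamma}$ (for $\gamma\ne\alpha$) via the previous lemma as $\bigcup_{(\ell,\sigma)\le(k,\gamma)}\overline{Q}^{\ell}_{\sigma}$, one observes that no index $\sigma$ appearing here can equal $\beta$: otherwise $\beta$ would have the two distinct level-$k$ ancestors $\alpha$ and $\gamma$, contradicting uniqueness. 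Hence the entire union is contained in $\bigcup_{\sigma\ne\beta}\overline{Q}^{\ell}_{\sigma}$, which is what we wanted.

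In the second case, the previous lemma gives $\overline{Q}^k_{\alpha}=\bigcup_{(\ell,\sigma)\le(k,\alpha)}\overline{Q}^{\ell}_{\sigma}$, and since $(\ell,\beta)\not\le(k,\alpha)$ every index $\sigma$ occurring in this union satisfies $\sigma\ne\beta$, so $\overline{Q}^k_{\alpha}\subset\bigcup_{\sigma\ne\beta}\overline{Q}^{\ell}_{\sigma}$. But $\tilde Q^{\ell}_{\beta}=X\setminus\bigcup_{\sigma\ne\beta}\overline{Q}^{\ell}_{\sigma}$ is exactly the complement of the latter set, whence $\tilde Q^{\ell}_{\beta}\cap\overline{Q}^k_{\alpha}=\emptyset$, completing the dichotomy.

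I do not anticipate a real obstacle: the argument is almost purely formal once the two displays from the preceding lemma are in hand. The only point that genuinely needs to be stated and used is the uniqueness of the level-$k$ ancestor of a pair $(\ell,\beta)$; this is immediate from the construction (a single parent is chosen at each step, and $\le$ is the transitive closure of the parent relation), but it is precisely the ingredient that makes the two unions above "line up", so it should be invoked explicitly rather than taken for granted.
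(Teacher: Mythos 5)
Your proof is correct and rests on exactly the same two ingredients as the paper's: the decomposition $\overline{Q}^k_{\gamma}=\bigcup_{(\ell,\sigma)\le(k,\gamma)}\overline{Q}^{\ell}_{\sigma}$ from the previous lemma and the uniqueness of the level-$k$ ancestor in the relation $\le$. The paper merely organizes the same argument differently (assuming $\tilde Q^{\ell}_{\beta}\cap\overline{Q}^k_{\alpha}\ne\emptyset$ and forcing $(\ell,\beta)\le(k,\alpha)$ via the definition of the open cubes, then using uniqueness to get disjointness from the other $\overline{Q}^k_{\gamma}$), so your casework-with-complements version is essentially the same proof.
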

\begin{proof}
Suppose that $\tilde Q^{\ell}_{\beta} \cap \overline{Q}^k_{\alpha} \ne \emptyset$. The previous lemma gives that $\tilde Q^{\ell}_{\beta} \cap \overline{Q}^{\ell}_{\sigma} \ne \emptyset$ for some
$(\ell, \sigma) \le (k, \alpha)$. By the definition of open cubes, we have to have $\beta = \sigma$, and thus $(\ell, \beta) \le (k, \alpha)$. Since there is a unique $\alpha$ with this property, it has to be that
$\tilde Q^{\ell}_{\beta} \cap \overline{Q}^k_{\gamma} = \emptyset$ for all $\gamma \ne \alpha$. This implies that
\begin{displaymath}
\tilde Q^{\ell}_{\beta} \subset X \setminus \bigcup_{\gamma \ne \alpha} \overline{Q}^k_{\gamma} = \tilde Q^k_{\alpha}.
\end{displaymath}
\end{proof}
We are now ready to construct the exact partition of $X$ using ``half-open'' cubes.
\begin{thm}
There exist sets $\hat Q^k_{\alpha}$, obtained from closed and open sets by finitely many operations, such that $\tilde Q^k_{\alpha} \subset \hat Q^k_{\alpha} \subset \overline{Q}^k_{\alpha}$,
\begin{displaymath}
X = \bigcup_{\alpha} \hat Q^k_{\alpha}
\end{displaymath}
for every $k \in \Z$ and if $\ell \ge k$ then either $\hat Q^k_{\alpha} \cap \hat Q^{\ell}_{\beta} = \emptyset$ or $\hat Q^{\ell}_{\beta} \subset \hat Q^k_{\alpha}$. Moreover, for every $\ell \ge k$ we have
\begin{displaymath}
\hat Q^k_{\alpha} = \bigcup_{\beta:\,(\ell, \beta) \le (k, \alpha)} \hat Q^{\ell}_{\beta}.
\end{displaymath}
\end{thm}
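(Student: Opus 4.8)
The plan is to build the half-open cubes by a greedy ``tie-breaking'' procedure that is simultaneous across all generations. We cannot simply define $\hat Q^k_\alpha$ generation by generation in isolation, because the nesting requirement links different scales; instead I would fix, for each $k$, a well-ordering (or at least a function assigning priorities) of the index set $\{\alpha\}$, and then declare that a point $x$ belongs to $\hat Q^k_\alpha$ if and only if $\alpha$ is the smallest index (in that well-ordering) among all $\beta$ with $x\in\overline Q^k_\beta$. This immediately gives $\bigcup_\alpha\hat Q^k_\alpha=X$ from Lemma \ref{fullcoverbyclosures}, disjointness of the $\hat Q^k_\alpha$ within a fixed generation, and $\tilde Q^k_\alpha\subset\hat Q^k_\alpha\subset\overline Q^k_\alpha$ (the left inclusion because a point in $\tilde Q^k_\alpha$ lies in no other $\overline Q^k_\beta$; the right inclusion by construction). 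Since the family $(\overline Q^k_\beta)_\beta$ is locally finite, the ``minimum'' is over a finite set near any point, and $\hat Q^k_\alpha$ is obtained from the $\overline Q^k_\beta$ and the $\tilde Q^k_\gamma=X\setminus\bigcup_{\delta\ne\gamma}\overline Q^k_\delta$ by finitely many unions, intersections and complements --- so it is of the asserted Boolean type.

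The crucial point is to choose the well-orderings \emph{coherently across scales}, so that nesting holds. Here I would use the ancestor relation: decree that the priority ordering on generation-$\ell$ indices refines, lexicographically, the ordering on their ancestors at all coarser generations $k<\ell$. Concretely, fix once and for all a well-ordering $\prec$ of $\bigcup_k\{(k,\alpha)\}$ and for two generation-$\ell$ indices $\beta,\beta'$ with $(\ell,\beta)\le(k,\gamma)$ and $(\ell,\beta')\le(k,\gamma')$, compare them first by comparing the ancestors $(k,\gamma)$ and $(k,\gamma')$ under $\prec$ at the \emph{coarsest} generation where they differ, and only break remaining ties by $\prec$ at generation $\ell$ itself. (Formally one takes, for each $(\ell,\beta)$, the chain of ancestors $(\ell,\beta)\le(\ell-1,\cdot)\le\cdots$ and orders by this chain.) With this choice, if $x\in\hat Q^\ell_\beta$ and $(\ell,\beta)\le(k,\alpha)$, then for every competitor $(\ell,\beta')$ with $x\in\overline Q^\ell_{\beta'}$ we have, by the previous lemma, $x\in\overline Q^k_{\alpha'}$ where $(\ell,\beta')\le(k,\alpha')$; since $\beta$ was the $\prec$-minimal competitor and the ancestor comparison comes first, $\alpha'$ cannot $\prec$-precede $\alpha$ at generation $k$, and when $\alpha'=\alpha$ minimality at generation $\ell$ is inherited --- one deduces $x\in\hat Q^k_\alpha$, giving $\hat Q^\ell_\beta\subset\hat Q^k_\alpha$. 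The converse alternative $\hat Q^k_\alpha\cap\hat Q^\ell_\beta=\emptyset$ then follows because if the intersection is nonempty, the point lies in $\overline Q^\ell_\beta\cap\overline Q^k_\alpha$, forcing $(\ell,\beta)\le(k,\alpha)$ by the nesting of the closed cubes, and we are back in the first case.

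Finally, the decomposition $\hat Q^k_\alpha=\bigcup_{\beta:\,(\ell,\beta)\le(k,\alpha)}\hat Q^\ell_\beta$ follows from what precedes together with the covering property: each $\hat Q^\ell_\beta$ with $(\ell,\beta)\le(k,\alpha)$ is contained in $\hat Q^k_\alpha$ by the nesting just proved, giving ``$\supset$''; conversely a point $x\in\hat Q^k_\alpha$ lies in some $\hat Q^\ell_\beta$ by the generation-$\ell$ covering, and that $\hat Q^\ell_\beta$ must be contained in $\hat Q^k_\alpha$ (it meets it), so $(\ell,\beta)\le(k,\alpha)$ by uniqueness of the coarse ancestor, giving ``$\subset$''. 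I expect the main obstacle to be purely bookkeeping: setting up the coherent family of well-orderings cleanly enough that the inheritance of minimality through ancestors is transparent, rather than a mess of nested case distinctions. An alternative, perhaps cleaner, route avoids transfinite well-orderings altogether: process generations downward from an arbitrary top scale is impossible since $k$ ranges over all of $\Z$, but one can instead fix a single global injection of $\bigcup_k\{(k,\alpha)\}$ into, say, $[0,1]$ that is ``ancestor-monotone'' in the above sense, using the local finiteness to make the required choices at each stage; this trades the well-ordering for an explicit labelling but the logical content is the same.
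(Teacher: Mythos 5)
There is a genuine gap, and it sits exactly at the point you yourself flag as delicate: the coherent ordering across scales. Your rule is to compare two generation-$\ell$ indices ``at the coarsest generation where their ancestor chains differ''. But the generations run over all of $\Z$, so there is no coarsest scale, and two ancestor chains need not ever merge: already in $\R$ with a standard dyadic-type lattice, the chains of cubes lying on the two sides of a fixed boundary point are distinct at \emph{every} generation. For such a pair ``the coarsest generation where they differ'' does not exist, so your order is not defined, and the nesting/coherence argument (which is otherwise correct, granted a coherent family of total orders) cannot even start. Your proposed alternative --- a single global ``ancestor-monotone'' injection into $[0,1]$ --- is precisely the nontrivial object whose existence would have to be proved; it is not produced by ``making the required choices at each stage'', because going coarser there is no first stage at which to begin, and a numerical embedding in which arbitrarily coarse generations dominate the comparison is exactly what a convergent labelling cannot deliver. (The gap is repairable --- e.g.\ one can order arbitrarily the equivalence classes of never-merging chains and order siblings within each parent, then check transitivity and coherence --- but none of this is in your write-up.)

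The paper sidesteps the issue by anchoring the construction at a fixed generation, $k=0$: there it disjointifies the closures $\overline Q^0_\alpha$ greedily in an arbitrary enumeration; for $k<0$ it makes \emph{no} priority choices at all, simply defining $\hat Q^k_\alpha$ as the union of the already disjoint level-$0$ cubes $\hat Q^0_\beta$ with $(0,\beta)\le(k,\alpha)$ (disjointness is automatic because each level-$0$ cube has a unique ancestor); and for $k>0$ it proceeds by induction downward, disjointifying within each parent $\hat Q^{k-1}_\alpha$ separately. This two-sided anchoring is exactly what replaces your global coherent order and is the step your proposal is missing. The remaining parts of your argument --- the covering from Lemma \ref{fullcoverbyclosures}, the inclusions $\tilde Q^k_\alpha\subset\hat Q^k_\alpha\subset\overline Q^k_\alpha$, the Boolean form via local finiteness, and the derivation of the decomposition $\hat Q^k_\alpha=\bigcup_{(\ell,\beta)\le(k,\alpha)}\hat Q^\ell_\beta$ from nesting plus covering --- are fine as stated once a coherent ordering (or the paper's anchored construction) is in place.
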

\begin{proof}
We may assume that $\alpha \in \N$ (just enumerate them for each $k$). For $k=0$, define
\begin{displaymath}
\hat Q^0_0 = \overline{Q}^{0}_0, \,\,\, \hat Q^0_{\alpha} = \overline{Q}^0_{\alpha} \setminus \bigcup_{\beta=0}^{\alpha-1} \hat Q^0_{\beta}, \,\,\, \alpha \ge 1.
\end{displaymath}
For $k < 0$ define
\begin{displaymath}
\hat Q^k_{\alpha} = \bigcup_{\beta:\,(0, \beta) \le (k, \alpha)} \hat Q^0_{\beta}.
\end{displaymath}
Finally, for $k > 0$ we proceed by induction as follows. Suppose that the cubes $\hat Q^{\ell}_{\alpha}$, $\ell \le k -1$, are already defined. For every $\alpha$, consider the finitely many pairs
$(k, \beta) \le (k-1, \alpha)$, temporarily relabel them $\beta = 0, 1, \ldots$ (up to some finite number), and set
\begin{displaymath}
\hat Q^k_0 = \hat Q^{k-1}_{\alpha} \cap \overline{Q}^k_0, \,\,\, \hat Q^k_{\beta} = \hat Q^{k-1}_{\alpha} \cap \overline{Q}^k_{\beta} \setminus \bigcup_{\gamma = 0}^{\beta-1} \hat Q^k_{\gamma}, \,\,\, \beta \ge 1.
\end{displaymath}
All the properties follow.
\end{proof}
Note that it trivially holds that $B(x^k_{\alpha}, \delta^k/100) \subset \tilde Q^k_{\alpha}$. Our final lemma concerning solely these cubes will be of use later in the randomization procedure studied in detail in Sec.~\ref{sec:randomDyadic}.
\begin{lem}\label{rlemma}
Let $m \in \N$ and $\epsilon > 0$ be such that $500\epsilon \le \delta^m$. Suppose that $x \in \overline{Q}^k_{\alpha}$ is such that $d(x, X \setminus \tilde Q^k_{\alpha}) < \epsilon\delta^k$.
Then for any chain
\begin{displaymath}
(k+m, \sigma) = (k+m, \sigma_{k+m}) \le \ldots \le (k+1, \sigma_{k+1}) \le (k, \sigma_k)
\end{displaymath}
such that $x \in \overline{Q}^{k+m}_{\sigma_{k+m}}$, there holds $d(x^j_{\sigma_j}, x^i_{\sigma_i}) \ge \delta^j/500$ for all $k \le j < i \le k+m$.
\end{lem}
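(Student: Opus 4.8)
\emph{Strategy.} The plan is to argue by contradiction: suppose $d(x^j_{\sigma_j}, x^i_{\sigma_i}) < \delta^j/500$ for some $k \le j < i \le k+m$. The point of the argument is that a single such ``short step'' in the chain traps the innermost cube $\overline Q^{k+m}_{\sigma_{k+m}}$ — and hence the point $x$ — inside a tiny ball centered at $x^j_{\sigma_j}$, which in turn sits deep inside the open cube $\tilde Q^j_{\sigma_j}$ and, following the chain back up to generation $k$, deep inside $\tilde Q^k_{\sigma_k}$. This will be incompatible either with $x \in \overline Q^k_\alpha$ (when $\sigma_k \ne \alpha$) or with the hypothesis $d(x, X\setminus\tilde Q^k_\alpha) < \epsilon\delta^k$ (when $\sigma_k = \alpha$).

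\emph{Localization and propagation.} First I would bound $\overline Q^i_{\sigma_i}$. Using $\overline Q^i_{\sigma_i} \subset B(x^i_{\sigma_i}, 5\delta^i)$, the assumed closeness $d(x^i_{\sigma_i}, x^j_{\sigma_j}) < \delta^j/500$, and $\delta \le 1/1000$ (so $5\delta^i \le 5\delta^{j+1} \le \delta^j/200$), the triangle inequality in the metric $d$ gives $\overline Q^i_{\sigma_i} \subset B(x^j_{\sigma_j}, \delta^j/200 + \delta^j/500) = B(x^j_{\sigma_j}, 7\delta^j/1000) \subset B(x^j_{\sigma_j}, \delta^j/100) \subset \tilde Q^j_{\sigma_j}$, the last inclusion being the trivial one recorded after the partition theorem. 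I then propagate along the chain. From $(k+m,\sigma_{k+m}) \le (i,\sigma_i)$ and the identity $\overline Q^i_{\sigma_i} = \bigcup_{(k+m,\sigma)\le(i,\sigma_i)}\overline Q^{k+m}_\sigma$ we get $x \in \overline Q^{k+m}_{\sigma_{k+m}} \subset \overline Q^i_{\sigma_i}$; from $(j,\sigma_j) \le (k,\sigma_k)$ and the same identity, $\emptyset \ne \tilde Q^j_{\sigma_j} \subset \overline Q^j_{\sigma_j} \subset \overline Q^k_{\sigma_k}$, so $\tilde Q^j_{\sigma_j}$ meets $\overline Q^k_{\sigma_k}$ and the nested-or-disjoint property of the open cubes forces $\tilde Q^j_{\sigma_j} \subset \tilde Q^k_{\sigma_k}$. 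Combining: $x \in \overline Q^i_{\sigma_i} \subset B(x^j_{\sigma_j}, 7\delta^j/1000)$, and $B(x^j_{\sigma_j}, \delta^j/100) \subset \tilde Q^k_{\sigma_k}$.

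\emph{Reaching the contradiction.} If $\sigma_k \ne \alpha$, then $\tilde Q^k_{\sigma_k} = X \setminus \bigcup_{\gamma\ne\sigma_k}\overline Q^k_\gamma$ is disjoint from $\overline Q^k_\alpha$, which contradicts $x \in \overline Q^k_\alpha \cap \tilde Q^k_{\sigma_k}$. If $\sigma_k = \alpha$, then since $x$ lies within $7\delta^j/1000$ of $x^j_{\sigma_j}$ and $B(x^j_{\sigma_j}, \delta^j/100) = B(x^j_{\sigma_j}, 10\delta^j/1000) \subset \tilde Q^k_\alpha$, the triangle inequality yields $B(x, 3\delta^j/1000) \subset \tilde Q^k_\alpha$, so $d(x, X\setminus\tilde Q^k_\alpha) \ge 3\delta^j/1000$. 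But $j < k+m$ gives $\delta^j \ge \delta^{k+m}$, and $500\epsilon \le \delta^m$ gives $\epsilon\delta^k \le \delta^{k+m}/500 = 2\delta^{k+m}/1000 \le 3\delta^j/1000 \le d(x, X\setminus\tilde Q^k_\alpha)$, contradicting the hypothesis. Either way we are done.

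\emph{Expected difficulty.} I do not expect a genuine obstacle here; the lemma is essentially bookkeeping. The one thing to be careful about is keeping the three types of cubes straight — only the \emph{closures} nest through the union identity $\overline Q^k_\alpha = \bigcup \overline Q^\ell_\sigma$, whereas the open cubes obey merely the weaker ``contained in, or disjoint from the closure'' dichotomy — together with getting the arithmetic of the many constants to cooperate, so that $1/200+1/500 \le 1/100$ and $\delta^{k+m}/500 \le 3\delta^j/1000$. A secondary point worth flagging is that one should not presume $\sigma_k = \alpha$: distinct closed generation-$k$ cubes may share boundary points, so this has to be treated as a (trivial) separate case rather than being assumed away.
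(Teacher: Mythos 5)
Your proof is correct and takes essentially the same route as the paper's: a contradiction from a single short step $d(x^j_{\sigma_j},x^i_{\sigma_i})<\delta^j/500$, using $\overline Q^i_{\sigma_i}\subset B(x^i_{\sigma_i},5\delta^i)$, $B(x^j_{\sigma_j},\delta^j/100)\subset\tilde Q^j_{\sigma_j}\subset\tilde Q^k_{\sigma_k}$, the chain nesting, and the same triangle-inequality arithmetic merely rearranged (the paper bounds $d(x^j_{\sigma_j},X\setminus\tilde Q^k_{\alpha})$ from above, you bound $d(x,X\setminus\tilde Q^k_{\alpha})$ from below). Your direct disjointness contradiction in the case $\sigma_k\ne\alpha$ is only a cosmetic variant of the paper's reduction via $d(x,X\setminus\tilde Q^k_{\sigma_k})=0$.
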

\begin{proof}
Let us denote, for brevity, $x^j_{\sigma_j}  = x_j$, $k \le j \le k+m$. Assume that $d(x_j,x_i) < \delta^j/500$ for some $k \le j < i \le k+m$. Let us first consider the case $\sigma_k = \alpha$.
We have $B(x_j, \delta^j/100) \subset \tilde Q^j_{\sigma_j} \subset \tilde Q^k_{\alpha}$, and thus
\begin{align*}
\delta^j/100 &\le d(x^j, X \setminus \tilde Q^k_{\alpha})\\
& \le d(x, X \setminus \tilde Q^k_{\alpha}) + d(x, x_i) + d(x_i, x_j) \\
& \le \epsilon\delta^k +5\delta^i + \delta^j/500 \\
& \le \delta^j/500 + \delta^j/200 + \delta^j/500 = 9\delta^j/1000 < \delta^j/100,
\end{align*} 
which is a contradiction. Suppose that $\sigma_k \ne \alpha$. Then $x \in \overline{Q}^{k+m}_{\sigma_{k+m}} \subset \overline{Q}^k_{\sigma_k}$. On the other hand, we have $x \in \overline{Q}^k_{\alpha} \subset X \setminus \tilde Q^k_{\sigma_k}$.
This implies that $d(x, X \setminus \tilde Q^k_{\sigma_k}) = 0 < \epsilon\delta^k$, so we are in the identical situation with $\alpha$ replaced by $\sigma_k$, and the same conclusion applies.
\end{proof}

We make the following important remark.
In all that follows, all the cubes will be ``half-open'', but the hat notation is no longer applied.

\section{Carleson's Embedding Theorem and Martingale Difference Decompositions}

We now state the Carleson embedding theorem and a related lemma in our setting. We omit the short proof of the following lemma; it is a straightforward adaptation of a known argument.

\begin{lem}
Let $a^k_{\alpha} \ge 0$ be non-negative numbers such that
\begin{displaymath}
\sum_{(\ell, \beta) \le (k,\alpha)} a^{\ell}_{\beta} \mu(Q^{\ell}_{\beta}) \le \mu(Q^k_{\alpha})
\end{displaymath}
for every $(k, \alpha)$. Suppose we also have some other collection of non-negative numbers $b^k_{\alpha} \ge 0$ and $b^*(x) = \sup_{Q^k_{\alpha} \ni x} b^k_{\alpha}$, when $x \in X$. Then it holds that
\begin{displaymath}
\sum_{(k, \alpha)} b^k_{\alpha}a^k_{\alpha}\mu(Q^k_{\alpha}) \le \int_X b^*\,d\mu.
\end{displaymath}
\end{lem}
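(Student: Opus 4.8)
The plan is to run the standard distributional (``layer-cake'') argument adapted to the dyadic structure of the half-open cubes. Since all terms are non-negative, write each coefficient as $b^k_\alpha=\int_0^\infty \chi_{\{b^k_\alpha>t\}}\,dt$ and apply Tonelli to get
\begin{displaymath}
\sum_{(k,\alpha)}b^k_\alpha a^k_\alpha\mu(Q^k_\alpha)=\int_0^\infty\Big(\sum_{(k,\alpha):\,b^k_\alpha>t}a^k_\alpha\mu(Q^k_\alpha)\Big)\,dt .
\end{displaymath}
It therefore suffices to show, for every fixed $t>0$, that the inner sum is at most $\mu(\{b^*>t\})$: integrating in $t$ then gives $\int_0^\infty\mu(\{b^*>t\})\,dt=\int_X b^*\,d\mu$ on the right. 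All objects here are legitimate because the cubes are Borel (obtained from open and closed sets by finitely many operations) and countable in number, so $b^*=\sup_{k,\alpha}b^k_\alpha\chi_{Q^k_\alpha}$ is Borel measurable and $\{b^*>t\}=\bigcup_{(k,\alpha):\,b^k_\alpha>t}Q^k_\alpha$.

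To estimate the inner sum at a fixed $t$, I would first reduce to a finite subfamily: the sum equals the supremum of its finite partial sums, so it is enough to bound $\sum_{(k,\alpha)\in G:\,b^k_\alpha>t}a^k_\alpha\mu(Q^k_\alpha)$ for an arbitrary finite index set $G$ and then take a supremum over $G$. Within the finite family $G_t:=\{(k,\alpha)\in G:\,b^k_\alpha>t\}$ one can select the \emph{maximal} cubes $Q^{k_i}_{\alpha_i}$, i.e.\ those not strictly contained in another cube indexed by $G_t$; by the dichotomy from the previous section, any two cubes meeting in a nonempty set are nested, so each member of $G_t$ lies below exactly one maximal $(k_i,\alpha_i)$, and distinct maximal cubes are pairwise disjoint. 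Grouping the sum according to which maximal cube contains a given term, and using that for $\ell\ge k_i$ a generation-$\ell$ cube contained in $Q^{k_i}_{\alpha_i}$ must be one of the generation-$\ell$ pieces partitioning $Q^{k_i}_{\alpha_i}$ (hence $Q^\ell_\beta\subset Q^{k_i}_{\alpha_i}\iff(\ell,\beta)\le(k_i,\alpha_i)$), the Carleson-type hypothesis yields
\begin{displaymath}
\sum_{(k,\alpha)\in G_t}a^k_\alpha\mu(Q^k_\alpha)
=\sum_i\sum_{\substack{(k,\alpha)\in G_t\\ Q^k_\alpha\subset Q^{k_i}_{\alpha_i}}}a^k_\alpha\mu(Q^k_\alpha)
\le\sum_i\sum_{(\ell,\beta)\le(k_i,\alpha_i)}a^\ell_\beta\mu(Q^\ell_\beta)
\le\sum_i\mu(Q^{k_i}_{\alpha_i}).
\end{displaymath}
By disjointness the last sum is $\mu\big(\bigcup_i Q^{k_i}_{\alpha_i}\big)$, and since $b^*\ge b^{k_i}_{\alpha_i}>t$ on each $Q^{k_i}_{\alpha_i}$ we have $\bigcup_i Q^{k_i}_{\alpha_i}\subset\{b^*>t\}$, which is exactly the claimed bound.

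Assembling the pieces: take the supremum over finite $G$ to get $\sum_{(k,\alpha):\,b^k_\alpha>t}a^k_\alpha\mu(Q^k_\alpha)\le\mu(\{b^*>t\})$ for every $t>0$, integrate in $t$, and identify the right-hand side with $\int_X b^*\,d\mu$. The only step needing genuine care — and the one I expect to be the main (if mild) obstacle — is the reduction to finite subfamilies before extracting maximal cubes: unlike in the doubling Euclidean picture, the cubes occupy all generations $k\in\Z$ with no coarsest one and the ambient measure may be infinite, so a maximal element of $\{b^k_\alpha>t\}$ need not exist without the finiteness restriction. Everything else is bookkeeping with the nesting and partition properties of the half-open cubes already established.
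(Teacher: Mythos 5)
The paper never writes this proof out (it is dismissed as ``a straightforward adaptation of a known argument''), and your layer-cake plus stopping-cubes argument is exactly that known argument, so in substance your route is the intended one, and the reduction to finite subfamilies --- the point you flagged --- is handled correctly. One step, however, does not survive the generality of these metric cubes as you wrote it: you select the maximal cubes of the finite family $G_t$ with respect to strict set inclusion and then assert that distinct maximal cubes are pairwise disjoint. In this setting two half-open cubes of \emph{different generations} can coincide as sets: whenever $(k,\alpha)$ has a unique child $(k+1,\beta)$ one gets $\hat Q^{k+1}_{\beta}=\hat Q^{k}_{\alpha}$ (this happens, e.g., around an isolated point at all sufficiently fine scales, and nothing in the hypotheses excludes atoms). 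Then both indices are ``maximal'' in your sense, the selected cubes are not disjoint, so $\sum_i\mu(Q^{k_i}_{\alpha_i})$ can strictly exceed $\mu\big(\bigcup_i Q^{k_i}_{\alpha_i}\big)$; moreover a coarser-generation member of $G_t$ that is contained in (indeed equal to) a finer maximal cube is not captured by the Carleson sum $\sum_{(\ell,\beta)\le(k_i,\alpha_i)}$, because the order $\le$ only runs from finer to coarser generations, so your middle inequality also leaks in that degenerate case.

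The repair is one line: choose the indices of $G_t$ that are maximal with respect to the dyadic order $\le$ itself (equivalently, among coinciding cubes keep only the coarsest-generation representative). If two distinct $\le$-maximal indices had intersecting cubes, the nesting property would force the cube of finer generation inside the other, hence (by the partition property, using that cubes are nonempty) its index would lie below the other, contradicting maximality; so the selected cubes are genuinely pairwise disjoint, every $(k,\alpha)\in G_t$ satisfies $(k,\alpha)\le(k_i,\alpha_i)$ for exactly one $i$, and your chain of inequalities, the inclusion $\bigcup_i Q^{k_i}_{\alpha_i}\subset\{b^*>t\}$, and the final integration in $t$ all go through verbatim. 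With that modification the proof is complete; the measurability and Tonelli steps are fine as you stated them, since the cubes are Borel and there are countably many of them.
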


\begin{thm}[Carleson's Embedding Theorem]
Suppose we are given non-negative numbers $a^k_{\alpha} \ge 0$ such that
\begin{displaymath}
\sum_{(\ell, \beta) \le (k,\alpha)} a^{\ell}_{\beta} \mu(Q^{\ell}_{\beta}) \le \mu(Q^k_{\alpha})
\end{displaymath}
for every $(k, \alpha)$. Then we have for every $f \in L^2(\mu)$ that
\begin{displaymath}
\sum_{(k, \alpha)} |\langle f\rangle_{Q^k_{\alpha}}|^2 a^k_{\alpha}\mu(Q^k_{\alpha}) \lesssim \|f\|_{L^2(\mu)}^2.
\end{displaymath}
\end{thm}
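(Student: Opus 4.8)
The plan is to deduce Carleson's Embedding Theorem from the preceding lemma (the ``dyadic Carleson lemma'') by the standard good-$\lambda$-free argument adapted to this setting. First I would apply the preceding lemma with the choice $b^k_\alpha = |\langle f\rangle_{Q^k_\alpha}|^2$, so that $b^*(x) = \sup_{Q^k_\alpha \ni x} |\langle f\rangle_{Q^k_\alpha}|^2 = (M_d f(x))^2$, where $M_d$ denotes the dyadic maximal function associated with our system of ``half-open'' cubes, $M_d f(x) = \sup_{Q^k_\alpha \ni x} |\langle f\rangle_{Q^k_\alpha}|$. The lemma then yields immediately
\begin{displaymath}
  \sum_{(k,\alpha)} |\langle f\rangle_{Q^k_\alpha}|^2 a^k_\alpha \mu(Q^k_\alpha) \le \int_X (M_d f)^2\, d\mu = \|M_d f\|_{L^2(\mu)}^2.
\end{displaymath}
So the theorem reduces to the $L^2(\mu)$-boundedness of the dyadic maximal operator $M_d$.

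For the latter I would run the usual Calder\'on--Zygmund stopping-time / weak-$(1,1)$ argument, which works verbatim on these cubes since for each fixed generation $k$ the cubes $\{Q^k_\alpha\}_\alpha$ partition $X$, and the nesting property guarantees that any two cubes are either disjoint or one contains the other. Concretely, for $\lambda > 0$ and (say) $f \in L^1(\mu) \cap L^2(\mu)$, one selects the maximal cubes $Q$ (with respect to inclusion) for which $|\langle f\rangle_Q| > \lambda$; these are pairwise disjoint, their union is exactly $\{M_d f > \lambda\}$ up to the usual care, and summing $\mu(Q) \le \lambda^{-1}\int_Q |f|\, d\mu$ over them gives $\mu(\{M_d f > \lambda\}) \le \lambda^{-1}\|f\|_{L^1(\mu)}$. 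Together with the trivial bound $\|M_d f\|_{L^\infty} \le \|f\|_{L^\infty}$, Marcinkiewicz interpolation yields $\|M_d f\|_{L^2(\mu)} \lesssim \|f\|_{L^2(\mu)}$ (indeed with a dimension-free constant, since the weak-$(1,1)$ constant here is $1$). Combining this with the display above gives the asserted inequality for $f \in L^1 \cap L^2$, and a routine density/truncation argument extends it to all $f \in L^2(\mu)$.

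One technical point that needs a word of care is the existence of the maximal cubes in the stopping-time selection: a priori an increasing chain $Q^{k_1}_{\alpha_1} \subset Q^{k_2}_{\alpha_2} \subset \cdots$ with $k_j \to -\infty$ could fail to terminate if $\mu$ has infinite mass. This is handled in the usual way: if $|\langle f\rangle_Q| > \lambda$ then $\mu(Q) < \lambda^{-1}\|f\|_{L^1(\mu)} < \infty$, so among the cubes containing a fixed point $x$ with average exceeding $\lambda$ there is one of maximal measure, hence (by nesting) a maximal one. If $\mu(X) = \infty$ there are no ``top'' cubes to worry about; if $\mu(X) < \infty$ one may simply note $M_d f(x) \le \langle |f|\rangle_X \vee \sup_{\text{proper } Q \ni x}$ and argue as before. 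I expect this bookkeeping, rather than any real analytic difficulty, to be the only place requiring attention; the heart of the matter — the pointwise bound $\sum |\langle f\rangle_{Q}|^2 a_Q \mu(Q) \le \|M_d f\|_{L^2}^2$ — is already supplied by the preceding lemma, and the maximal function estimate is entirely classical once one observes that the half-open cubes form an honest partition at each scale.
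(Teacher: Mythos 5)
Your argument is correct and follows exactly the paper's route: you apply the preceding lemma with $b^k_{\alpha}=|\langle f\rangle_{Q^k_{\alpha}}|^2$ and then invoke the strong type $(2,2)$ bound for the dyadic maximal operator $M_d$, which is precisely what the paper's one-line proof does. The only difference is that you spell out the classical weak $(1,1)$ plus interpolation proof of the maximal function bound (including the stopping-time bookkeeping), which the paper simply cites as known; this is done correctly.
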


\begin{proof}
Follows from the previous lemma and the fact that the dyadic maximal operator $M_d$ related to this set of cubes $Q^k_{\alpha}$
is of strong type $(2,2)$.
\end{proof}

We continue to define the martingale difference decomposition of a function $f$, a tool fundamental to our study. Set
\begin{align*}
E_kf &= \sum_{\alpha} \langle f \rangle_{Q^k_{\alpha}}\chi_{Q^k_{\alpha}}, \\
E_{Q^k_{\alpha}}f &= \chi_{Q^k_{\alpha}}E_k f, \\
\Delta_kf &= E_{k+1}f - E_kf, \\
\Delta_{Q^k_{\alpha}}f & = \chi_{Q^k_{\alpha}}\Delta_k f.
\end{align*}
If $b$ is accretive, one has the $b$-adapted versions $E_k^b$, $E_{Q^k_{\alpha}}^b$, $\Delta_k^b$ and $\Delta_{Q^k_{\alpha}}^b$:
\begin{align*}
E_k^bf &= \sum_{\alpha} \langle f \rangle_{Q^k_{\alpha}}\langle b \rangle_{Q^k_{\alpha}}^{-1} \chi_{Q^k_{\alpha}}b, \\
E_{Q^k_{\alpha}}^bf &= \chi_{Q^k_{\alpha}}E_k^b f, \\
\Delta_k^bf &= E_{k+1}^bf - E_k^bf, \\
\Delta_{Q^k_{\alpha}}^bf & = \chi_{Q^k_{\alpha}}\Delta_k^b f.
\end{align*}
We have for any $m$ the decompositions
\begin{align*}
f &= \sum_{(k, \alpha): \, k \ge m} \Delta_{Q^k_{\alpha}}f + \sum_{\alpha} E_{Q^m_{\alpha}}f \\
 & = \sum_{(k, \alpha): \, k \ge m} \Delta_{Q^k_{\alpha}}^bf + \sum_{\alpha} E_{Q^m_{\alpha}}^bf,
\end{align*}
and it also holds that
\begin{align*}
\|f\|_{L^2(\mu)}^2 &= \sum_{(k, \alpha): \, k \ge m} \|\Delta_{Q^k_{\alpha}}f\|_{L^2(\mu)}^2 + \sum_{\alpha} \|E_{Q^m_{\alpha}}f\|_{L^2(\mu)}^2 \\
&\sim \sum_{(k, \alpha): \, k \ge m} \|\Delta_{Q^k_{\alpha}}^bf\|_{L^2(\mu)}^2 + \sum_{\alpha} \|E_{Q^m_{\alpha}}^bf\|_{L^2(\mu)}^2.
\end{align*}
Here the implied constants depends only on $b$ via its accretivity constant and $L^{\infty}$-norm. This last fact follows from some algebraic manipulations, Carleson's embedding theorem and duality as in \cite[Lemma 4.1]{NTV}.

In what follows we are given two dyadic systems $\mathcal{D}$ and $\mathcal{D}'$ of ``half-open'' cubes as constructed above,
and we associate the accretive function $b_1$ to $\mathcal{D}$ and the accretive function $b_2$ to $\mathcal{D}'$.
We denote the $\mathcal{D}$-cubes by $Q^k_{\alpha}$ and the $\mathcal{D}'$-cubes by $R^m_{\gamma}$. Since for many purposes this is too heavy a notation, we agree that
$\ell(Q^k_{\alpha}) = \delta^k$ and $\textrm{gen}(Q^k_{\alpha}) = k$ (and similarly for the $\mathcal{D}'$-cubes), and more often than not denote simply $Q = Q^k_{\alpha}$ and $R = R^m_{\gamma}$.
Also, set $C_0 = 10$, $C_1 = 1/100$ and $C_3 = 4$, so that $d(Q^k_{\alpha}) < C_0\delta^k$, $B(x^k_{\alpha}, C_1\delta^k) \subset Q^k_{\alpha}$ and $\min_{\alpha} d(x, x^k_{\alpha}) < C_3\delta^k$ (and similarly 
for the other grid $\mathcal{D}'$). Note also that all these cubes are sets like in the definition of accretivity.

We have now disposed of the preliminaries, and will begin the task of estimating the operator $T$. As the reader probably already knows, the idea is to write the adapted martingale difference decompositions for two functions $f$ and $g$ with respect to the grids $\mathcal{D}$ and $\mathcal{D}'$ respectively, decompose $\langle Tf, g\rangle$, and study the various pairings $\langle T\Delta^{b_1}_Q f, \Delta^{b_2}_R g\rangle$ thus introduced.
Note that the theorems and lemmas formulated below do not cover all the cases per se (we mostly consider $\ell(Q)\leq\ell(R)$ only), but combined with symmetry they do.
Everything will be brought together to prove the $Tb$ theorem in the very end. We follow the outline given by the most general aspects of \cite{NTV}, and the main contributions are in the details.

\section{Separated cubes}

Here we deal with well-separated cubes $Q \in \mathcal{D}$ and $R \in \mathcal{D}'$.

\begin{lem}  
Let $Q \in \mathcal{D}$, $R \in \mathcal{D}'$, $\ell(Q) \le \ell(R)$ and $d(Q,R) \ge CC_0\ell(Q)$. Let $\varphi_Q$ and $\psi_R$ be $L^2(\mu)$ functions supported by the cubes $Q$ and $R$ respectively
and assume $\int \varphi_Q = 0$. We have the estimate
\begin{displaymath}
|\langle T\varphi_Q, \psi_R\rangle| \lesssim \frac{\ell(Q)^{\alpha}}{d(Q,R)^{\alpha}\sup_{z \in Q} \lambda(z, d(Q,R))}\mu(Q)^{1/2}\mu(R)^{1/2}\|\varphi_Q\|_{L^2(\mu)}\|\psi_R\|_{L^2(\mu)}.
\end{displaymath}
\end{lem}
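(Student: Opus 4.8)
The plan is to use the vanishing mean of $\varphi_Q$ together with the Hölder regularity of the kernel in the second variable, and then to read off the claimed bound from elementary geometry of the two cubes.

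First I would note that, since $d(Q,R)\ge CC_0\ell(Q)>0$, the cubes $Q$ and $R$ are disjoint, so $\psi_R$ is supported off the support of $\varphi_Q$ and the Calderón--Zygmund representation of $T\varphi_Q$ applies on $R$. Hence, by Fubini (the double integral being absolutely convergent by the size estimate on $K$, which is bounded by a fixed constant for $x\in R$, $y\in Q$, and by the finiteness of $\mu(Q),\mu(R)$),
\[
\langle T\varphi_Q,\psi_R\rangle=\int_R\int_Q K(x,y)\varphi_Q(y)\psi_R(x)\,d\mu(y)\,d\mu(x).
\]
Fixing any point $y_0\in Q$ (e.g.\ its center) and using $\int_Q\varphi_Q\,d\mu=0$ to subtract the term $K(x,y_0)$, which does not depend on $y$, gives
\[
\langle T\varphi_Q,\psi_R\rangle=\int_R\int_Q\bigl(K(x,y)-K(x,y_0)\bigr)\varphi_Q(y)\psi_R(x)\,d\mu(y)\,d\mu(x).
\]

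Next I would apply the kernel smoothness estimate in the $y$-variable. For $y\in Q$ we have $d(y,y_0)\le d(Q)<C_0\ell(Q)$, while for $x\in R$, $y\in Q$ we have $d(x,y)\ge d(Q,R)\ge CC_0\ell(Q)>C\,d(y,y_0)$, so the side condition of that estimate is met and, using also that $r\mapsto\lambda(y,r)$ is non-decreasing,
\[
|K(x,y)-K(x,y_0)|\le C\frac{d(y,y_0)^\alpha}{d(x,y)^\alpha\lambda(y,d(x,y))}\lesssim\frac{\ell(Q)^\alpha}{d(Q,R)^\alpha\,\lambda(y,d(Q,R))}.
\]
To pass from $\lambda(y,d(Q,R))$ for a single $y\in Q$ to $\sup_{z\in Q}\lambda(z,d(Q,R))$ I would invoke the standard normalization of the dominating function (see \cite{Hy}), under which $\lambda(y,r)\sim\lambda(z,r)$ whenever $d(y,z)\le r$; applying this with $r=d(Q,R)$ and $y,z\in Q$ is legitimate since $d(Q)<C_0\ell(Q)\le d(Q,R)$, and it yields $\lambda(y,d(Q,R))\gtrsim\sup_{z\in Q}\lambda(z,d(Q,R))$, with implied constant depending only on $C_\lambda$.

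Finally, plugging this pointwise bound into the double integral and applying Cauchy--Schwarz in each variable,
\[
\int_Q|\varphi_Q|\,d\mu\le\|\varphi_Q\|_{L^2(\mu)}\mu(Q)^{1/2},\qquad\int_R|\psi_R|\,d\mu\le\|\psi_R\|_{L^2(\mu)}\mu(R)^{1/2},
\]
produces exactly the asserted estimate. I do not expect a genuine obstacle here: this is the routine ``separated cubes'' computation, and the only points needing a little care are verifying that the separation hypothesis $d(Q,R)\ge CC_0\ell(Q)$ really does guarantee the side condition $d(x,y)\ge C\,d(y,y_0)$ of the Hölder kernel estimate, and the bookkeeping with the $x$-dependence of $\lambda$ that lets one replace $\lambda(y,\cdot)$ by the supremum over $Q$.
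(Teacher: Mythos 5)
Your overall route --- using $\int\varphi_Q\,d\mu=0$ to subtract $K(x,y_0)$, applying the H\"older estimate in the second variable (the separation $d(Q,R)\ge CC_0\ell(Q)$ validating its side condition), and finishing with Cauchy--Schwarz --- is exactly the paper's intended one-line proof. The one genuine gap is the step upgrading $\lambda(y,d(Q,R))$, with $y\in Q$ the running integration variable, to $\sup_{z\in Q}\lambda(z,d(Q,R))$ by appeal to a ``standard normalization'' under which $\lambda(y,r)\sim\lambda(z,r)$ whenever $d(y,z)\le r$. No such property is assumed in this paper (a dominating function is only required to be non-decreasing and doubling in $r$ and to dominate $\mu(B(x,r))$; no regularity in $x$ is imposed), it is not established in \cite{Hy} either, and it can genuinely fail: on $\R$ with Lebesgue measure, $\lambda(x,r)=2r$ for $x\ne0$ and $\lambda(0,r)=2r+1$ is a legitimate dominating function with $C_\lambda=2$, yet $\lambda(0,r)/\lambda(y,r)\to\infty$ as $r\to0$, so for a small cube $Q\ni0$ your inequality $\lambda(y,d(Q,R))\gtrsim\sup_{z\in Q}\lambda(z,d(Q,R))$ is false. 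One can indeed pass to $\tilde\lambda(x,r)=\inf_{z\in X}\lambda(z,r+d(x,z))$, which has the comparability you want and preserves all the hypotheses (since $\tilde\lambda\le\lambda$, the kernel estimates only become weaker requirements), but then you have proved the lemma with $\sup_{z}\tilde\lambda$ in the denominator, which is a strictly weaker bound than the stated one for the given $\lambda$.

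The fix is small and stays within the paper's assumptions: choose the comparison point $y_0\in Q$ to nearly maximize $z\mapsto\lambda(z,d(Q,R))$ over $Q$, say $\lambda(y_0,d(Q,R))\ge\tfrac12\sup_{z\in Q}\lambda(z,d(Q,R))$, and apply the second-variable H\"older estimate \emph{centered at $y_0$} rather than at $y$: since $|K(x,y)-K(x,y_0)|$ is symmetric in $(y,y_0)$ and $d(x,y_0)\ge d(Q,R)\ge CC_0\ell(Q)> C\,d(y_0,y)$, the standard estimate gives
\begin{displaymath}
|K(x,y)-K(x,y_0)|\le C\,\frac{d(y,y_0)^{\alpha}}{d(x,y_0)^{\alpha}\,\lambda(y_0,d(x,y_0))}
\lesssim\frac{\ell(Q)^{\alpha}}{d(Q,R)^{\alpha}\,\sup_{z\in Q}\lambda(z,d(Q,R))},
\end{displaymath}
and your Cauchy--Schwarz step then yields the lemma exactly as stated. (With a fixed $y_0$ your Fubini worry also disappears: one only needs the pointwise identity $T\varphi_Q(x)=\int_Q[K(x,y)-K(x,y_0)]\varphi_Q(y)\,d\mu(y)$ for $x\in R$, where the integrand is uniformly bounded.)
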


\begin{proof}
This follows from the second kernel estimate via the facts that $\varphi_Q$ has zero integral and $d(Q,R) \ge CC_0\ell(Q)$.
\end{proof}

A reader familiar with the original proof may recall that a condition of the type $d(Q, R) \ge \ell(Q)^{\gamma}\ell(R)^{1-\gamma}$ plays a key role. The correct choice for $\gamma$ in our situation is has the same algebraic expression as in \cite{NTV},
\begin{displaymath}
\gamma := \frac{\alpha}{2(\alpha + d)},
\end{displaymath}
where we recall that $d := \log_2 C_{\lambda}$ in our setting. We then have the familiar relation $\gamma d + \gamma \alpha = \alpha/2$. Also, set
$D(Q,R) = \ell(Q) + \ell(R) + d(Q,R)$.

\begin{lem}\label{ch6l}
Assume that $Q \in \mathcal{D}$, $R \in \mathcal{D}'$, $\ell(Q) \le \ell(R)$, $d(Q,R) \ge CC_0\ell(Q)$ and, in addition, $d(Q,R) \ge \ell(Q)^{\gamma}\ell(R)^{1-\gamma}$.
Let $\varphi_Q$ and $\psi_R$ be $L^2(\mu)$ functions supported by the cubes $Q$ and $R$ respectively and assume $\int \varphi_Q = 0$. We have the estimate
\begin{displaymath}
|\langle T\varphi_Q, \psi_R\rangle| \lesssim \frac{\ell(Q)^{\alpha/2}\ell(R)^{\alpha/2}}{D(Q,R)^{\alpha}\sup_{z \in Q} \lambda(z, D(Q,R))}\mu(Q)^{1/2}\mu(R)^{1/2}\|\varphi_Q\|_{L^2(\mu)}\|\psi_R\|_{L^2(\mu)}.
\end{displaymath}
\end{lem}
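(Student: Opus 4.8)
The plan is to combine the previous lemma (the "raw" separated-cubes estimate, with $d(Q,R)$ in the denominator) with the extra hypothesis $d(Q,R) \ge \ell(Q)^{\gamma}\ell(R)^{1-\gamma}$ to upgrade $d(Q,R)$ to the symmetric quantity $D(Q,R) = \ell(Q) + \ell(R) + d(Q,R)$, at the cost of splitting the single power $d(Q,R)^{-\alpha}$ into $\ell(Q)^{\alpha/2}\ell(R)^{\alpha/2}D(Q,R)^{-\alpha}$. First I would observe that, since $\ell(Q) \le \ell(R)$ and $d(Q,R) \ge CC_0 \ell(Q) \gtrsim \ell(Q)$, we have $D(Q,R) \sim \ell(R) + d(Q,R)$, and moreover $D(Q,R) \lesssim \max(\ell(R), d(Q,R))$. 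So there are essentially two regimes: either $d(Q,R) \gtrsim \ell(R)$, in which case $D(Q,R) \sim d(Q,R)$ and the claimed bound is immediate from the previous lemma (the numerator $\ell(Q)^{\alpha/2}\ell(R)^{\alpha/2} \le \ell(Q)^{\alpha} \cdot (\ell(R)/\ell(Q))^{\alpha/2}$ — wait, better: simply note $\ell(Q)^{\alpha/2}\ell(R)^{\alpha/2} \ge \ell(Q)^{\alpha}$ since $\ell(R) \ge \ell(Q)$, so the new bound is actually weaker and follows trivially); or else $\ell(R) \gtrsim d(Q,R)$, which is the case where the hypothesis $d(Q,R) \ge \ell(Q)^{\gamma}\ell(R)^{1-\gamma}$ does the work.

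In that second regime, $D(Q,R) \sim \ell(R)$, so I must show
\begin{displaymath}
\frac{\ell(Q)^{\alpha}}{d(Q,R)^{\alpha}\sup_{z\in Q}\lambda(z, d(Q,R))}
  \lesssim \frac{\ell(Q)^{\alpha/2}\ell(R)^{\alpha/2}}{\ell(R)^{\alpha}\sup_{z\in Q}\lambda(z, \ell(R))}.
\end{displaymath}
For the polynomial factors this reduces, after cancelling $\ell(Q)^{\alpha/2}$, to $\ell(Q)^{\alpha/2}\ell(R)^{\alpha/2} \lesssim d(Q,R)^{\alpha}$, i.e. $d(Q,R) \gtrsim \ell(Q)^{1/2}\ell(R)^{1/2}$; but this is exactly what $d(Q,R) \ge \ell(Q)^{\gamma}\ell(R)^{1-\gamma}$ gives once we use $\gamma \le 1/2$ (so $\ell(Q)^{\gamma}\ell(R)^{1-\gamma} \ge \ell(Q)^{1/2}\ell(R)^{1/2}$ because $\ell(Q) \le \ell(R)$ and $t \mapsto (\ell(Q)/\ell(R))^{t}$ is non-increasing). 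For the $\lambda$-factor I would use that $d(Q,R) \le A\,\ell(R)$ for some constant (since we are in the regime $d(Q,R)\lesssim\ell(R)$, and then enlarge slightly using $A_0$ if need be), so by the doubling property $\lambda(z,2r) \le C_\lambda \lambda(z,r)$ applied a bounded number of times, $\lambda(z, \ell(R)) \lesssim \lambda(z, d(Q,R))$ uniformly in $z \in Q$; hence $\sup_{z\in Q}\lambda(z,\ell(R)) \lesssim \sup_{z\in Q}\lambda(z,d(Q,R))$, which is the inequality needed in the $\lambda$-denominators (note the direction: a larger $\lambda$ in the denominator of the target makes the target smaller, so I need to bound the target's $\lambda$ from below by the source's, i.e. $\lambda(z,\ell(R)) \gtrsim \lambda(z,d(Q,R))$ — and since $d(Q,R) \le \ell(R)$ this is free from monotonicity of $\lambda$). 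Actually only monotonicity, not doubling, is needed in this direction; doubling would be needed in the opposite regime, but there it is not needed either since there $D(Q,R)\sim d(Q,R)$ exactly up to constants.

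Putting the two regimes together yields the claim, with all implied constants depending only on the admissible quantities. The only mildly delicate point is keeping straight the directions of the inequalities in the $\lambda$-denominators — since $\lambda$ sits in a denominator, "bigger $\lambda$" means "smaller bound", so one must be careful about which way monotonicity of $r \mapsto \lambda(x,r)$ is applied; but because we always compare $\lambda$ at the two radii $d(Q,R)$ and $D(Q,R)$ with $d(Q,R) \le D(Q,R)$, monotonicity consistently gives $\lambda(z,d(Q,R)) \le \lambda(z, D(Q,R))$, which is precisely what is wanted, and no doubling of $\lambda$ is actually required for this lemma. I do not expect any real obstacle here: it is a bookkeeping lemma interpolating between the two natural scales, and the choice $\gamma = \alpha/(2(\alpha+d)) \le 1/2$ is exactly calibrated so that the polynomial estimate closes.
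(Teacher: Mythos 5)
Your case distinction ($d(Q,R)\ge\ell(R)$ versus $d(Q,R)\le\ell(R)$) is the same as the paper's, and the first case goes through (although, contrary to your closing remark, the doubling of $\lambda$ \emph{is} used there: since $D(Q,R)\ge d(Q,R)$, monotonicity gives $\lambda(z,d(Q,R))\le\lambda(z,D(Q,R))$, which is the wrong direction for the denominators; one needs $\lambda(z,D(Q,R))\le\lambda(z,3d(Q,R))\le C_{\lambda}^2\lambda(z,d(Q,R))$). The genuine gap is in the second case. You claim $\lambda(z,\ell(R))\lesssim\lambda(z,d(Q,R))$ uniformly ``by the doubling property applied a bounded number of times''. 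That is false: in this regime the only lower bound available for $d(Q,R)$ is $\ell(Q)^{\gamma}\ell(R)^{1-\gamma}$, so the ratio $\ell(R)/d(Q,R)$ can be as large as $(\ell(R)/\ell(Q))^{\gamma}$, which is unbounded as $\ell(Q)/\ell(R)\to 0$; the number of doublings required is of order $\gamma\log_2(\ell(R)/\ell(Q))$ and the comparison constant blows up like $(\ell(R)/\ell(Q))^{\gamma d}$. Moreover, you have already spent the entire polynomial gain by weakening the hypothesis to $d(Q,R)\ge\ell(Q)^{1/2}\ell(R)^{1/2}$ (using only $\gamma\le 1/2$) in order to get $d(Q,R)^{\alpha}\ge\ell(Q)^{\alpha/2}\ell(R)^{\alpha/2}$, so nothing is left to absorb this blow-up, and the argument does not close. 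Indeed, taking $\lambda(x,r)=r^{d}$ and $d(Q,R)=\ell(Q)^{\gamma}\ell(R)^{1-\gamma}$, the bound of the previous lemma coincides with the asserted bound up to constants, so there is no slack at all: any step losing an unbounded factor is fatal.

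The paper's treatment of this case keeps the full strength of $d(Q,R)\ge\ell(Q)^{\gamma}\ell(R)^{1-\gamma}$ and balances the two losses against each other: it estimates $\lambda(z,\ell(R))\le C_{\lambda}^{\gamma\log_2(\ell(R)/\ell(Q))+1}\lambda(z,\ell(Q)^{\gamma}\ell(R)^{1-\gamma})\lesssim(\ell(R)/\ell(Q))^{\gamma d}\lambda(z,d(Q,R))$, i.e.\ accepts the factor $(\ell(R)/\ell(Q))^{\gamma d}$ in the $\lambda$-comparison, and compensates with the full polynomial bound $d(Q,R)^{\alpha}\ge\ell(Q)^{\gamma\alpha}\ell(R)^{(1-\gamma)\alpha}$, which exceeds $\ell(Q)^{\alpha/2}\ell(R)^{\alpha/2}$ by exactly the factor $(\ell(R)/\ell(Q))^{\gamma d}$ thanks to the identity $\gamma d+\gamma\alpha=\alpha/2$. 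This cancellation is the whole point of the specific choice $\gamma=\alpha/(2(\alpha+d))$; the inequality $\gamma\le 1/2$ alone does not suffice. To repair your proof you would have to redo the second case along these lines.
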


\begin{proof}
Consider first the case $d(Q,R) \ge \ell(R)$. Then $d(Q,R) \ge D(Q,R)/3$ and so $\lambda(z, d(Q,R)) \ge \lambda(z, D(Q,R)/3) \gtrsim \lambda(z, D(Q,R))$ as $\lambda$ is doubling. The estimate then follows from the previous lemma.

Let us now assume that $d(Q,R) \le \ell(R)$. Note that $C_{\lambda} = 2^d$ so that
\begin{displaymath}
C_{\lambda}^{-\gamma \log_2 \frac{\ell(R)}{\ell(Q)}} = \Big(\frac{\ell(R)}{\ell(Q)}\Big)^{-\gamma d}.
\end{displaymath}
We have
\begin{align*}
  \lambda(z, \ell(R))
  &= \lambda(z, (\ell(R)/\ell(Q))^{\gamma} \ell(Q)^{\gamma} \ell(R)^{1-\gamma}) \\
  &\le C_{\lambda}^{\gamma \log_2 \frac{\ell(R)}{\ell(Q)} + 1} \lambda(z, \ell(Q)^{\gamma} \ell(R)^{1-\gamma})
\end{align*}
and thus
\begin{align*}
  \lambda(z, d(Q,R))
  &\ge \lambda(z, \ell(Q)^{\gamma}\ell(R)^{1-\gamma}) \\
  &\gtrsim C_{\lambda}^{-\gamma \log_2 \frac{\ell(R)}{\ell(Q)}} \lambda(z, \ell(R))
  = \Big(\frac{\ell(R)}{\ell(Q)}\Big)^{-\gamma d}\lambda(z, \ell(R)).
\end{align*}
This implies that
\begin{eqnarray*}
\frac{\ell(Q)^{\alpha}}{d(Q,R)^{\alpha}\lambda(z, d(Q,R))} & \lesssim & \frac{\ell(Q)^{\alpha}}{\ell(Q)^{\gamma \alpha}\ell(R)^{(1-\gamma)\alpha}\Big(\frac{\ell(R)}{\ell(Q)}\Big)^{-\gamma d}\lambda(z, \ell(R))} \\
& = & \frac{\ell(Q)^{\alpha - (\gamma d + \gamma \alpha)} \ell(R)^{\gamma d + \gamma \alpha}}{\ell(R)^{\alpha}\lambda(z, \ell(R))} \\
& = & \frac{\ell(Q)^{\alpha/2} \ell(R)^{\alpha/2}}{\ell(R)^{\alpha}\lambda(z, \ell(R))}.
\end{eqnarray*}
Furthermore, one now has $\ell(R) \ge D(Q,R)/3$ so that the claim follows from the previous lemma.
\end{proof}
We may now forget for the moment under which assumptions these estimates were achieved, and just study the matrix that we got.
Namely, let us define the matrix
\begin{displaymath}
T_{QR} = \frac{\ell(Q)^{\alpha/2}\ell(R)^{\alpha/2}}{D(Q,R)^{\alpha}\sup_{z \in Q} \lambda(z, D(Q,R))}\mu(Q)^{1/2}\mu(R)^{1/2},
\end{displaymath}
if $Q \in \mathcal{D}$, $R \in \mathcal{D}'$ and $\ell(Q) \le \ell(R)$, and
\begin{displaymath}
T_{QR} = 0
\end{displaymath}
otherwise.

\begin{prop}\label{ch6p}
Suppose we are given nonnegative constants $x_Q$ and $y_R$ for each $Q \in \mathcal{D}$ and $R \in \mathcal{D}'$. It holds
\begin{displaymath}
\sum_{Q, \, R} T_{QR}x_Qy_R \lesssim \Big( \sum_{Q} x_Q^2 \Big)^{1/2}\Big(\sum_{R}  y_R^2\Big)^{1/2}.
\end{displaymath}
\end{prop}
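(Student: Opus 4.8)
The estimate is a Schur-type bound for the "almost orthogonal" matrix $T_{QR}$, and the plan is to prove it by finding a suitable Schur test function, i.e.\ weights $w_Q$ so that $\sum_R T_{QR} w_R \lesssim w_Q$ and symmetrically $\sum_Q T_{QR} w_Q \lesssim w_R$. Since $T_{QR}=0$ unless $\ell(Q)\le\ell(R)$, the two sums are genuinely different and must be handled separately. Guided by the Euclidean argument in \cite[Lemma~6.1 and below]{NTV}, I expect the correct choice to be $w_Q = \mu(Q)^{1/2} \ell(Q)^{\alpha/2}$ (or a small variant thereof, perhaps with an extra $\ell(Q)^{\varepsilon}$ for some tiny $\varepsilon>0$ to create summable geometric series); with this choice, after applying the Schur test with weights $w_Q$, the Proposition reduces to $\sum_{Q,R} T_{QR} x_Q y_R \le (\sum_Q T_{QR} w_R/w_Q \cdot x_Q^2)^{1/2}(\ldots)^{1/2}$ via Cauchy--Schwarz, and then to the two one-sided bounds.

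The first one-sided bound is the sum over $R\supset$-sized cubes for fixed $Q$, i.e.\ $\sum_{R:\,\ell(R)\ge\ell(Q)} T_{QR}\,\mu(R)^{1/2}\ell(R)^{\alpha/2} \lesssim \mu(Q)^{1/2}\ell(Q)^{\alpha/2}$. Fixing the generation of $R$ (so $\ell(R) = \delta^{m}$ with $\delta^m \ge \ell(Q)$), I would group the $\mathcal D'$-cubes $R$ of that generation according to the dyadic distance $D(Q,R)$; there are boundedly many such $R$ with $D(Q,R)\sim 2^j\ell(R)$ for each $j\ge 0$ (this uses geometric doubling, in the form of Lemma~2.2, applied to the balls $B(c_R, C_0\delta^m)\subset Q^m_\gamma$, since these balls are disjoint across distinct $\gamma$ up to bounded overlap — more precisely $B(x^m_\gamma,\delta^m/8)$ are genuinely disjoint). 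For each such $R$, $\mu(R)\le \lambda(c_R, C_0\delta^m)$, which by the doubling property of $\lambda$ and upper doubling is comparable, up to a factor $(2^j)^{d}$, to $\sup_{z\in Q}\lambda(z, D(Q,R))$ once one also absorbs the change of center from $c_R$ to $z\in Q$ — here one needs that $D(Q,R)$ dominates the distance from $z$ to $c_R$, which it does by construction. After substituting, the $\mu(R)^{1/2}$ factors and the $\sup\lambda$ in the denominator combine, the count of $R$'s contributes $2^{jn}$, and one is left with a geometric series in $j$ and a separate geometric series in $m$ (since the ratio $\ell(Q)^{\alpha/2}\ell(R)^{\alpha/2}/D(Q,R)^{\alpha}$ decays in $D(Q,R)\ge\ell(R)$ as $(\ell(Q)/\ell(R))^{\alpha/2}$, and $\ell(R)=\delta^m\to 0$ as $m$ decreases below $\log_\delta\ell(Q)$ — wait, rather $m\le\log_\delta\ell(Q)$ so $\ell(R)$ ranges over $\ell(Q),\delta^{-1}\ell(Q),\ldots$, and the factor $(\ell(Q)/\ell(R))^{\alpha/2}$ gives the decay). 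One must check the exponents line up so that $2^{jn}\cdot 2^{-j\alpha}\cdot 2^{jd/?}$ actually sums; this is exactly where the value $\gamma = \alpha/(2(\alpha+d))$ and the relation $\gamma d+\gamma\alpha=\alpha/2$ were engineered, so I expect the bookkeeping to close with room to spare.

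The second one-sided bound, $\sum_{Q:\,\ell(Q)\le\ell(R)} T_{QR}\,\mu(Q)^{1/2}\ell(Q)^{\alpha/2}\lesssim \mu(R)^{1/2}\ell(R)^{\alpha/2}$, is the more delicate one and I expect it to be the main obstacle, because now for fixed $R$ the small cubes $Q$ pile up: at generation $k$ with $\delta^k\le\ell(R)$ there can be many $Q^k_\alpha$ near $R$, and we must sum $\mu(Q)$ over them. The saving comes from the zero-integral cancellation only implicitly (it was already used to produce the matrix), so here purely the geometry must do the work: one splits the $Q$'s by $D(Q,R)\sim 2^j\ell(R)$ (note $D\ge\ell(R)$ always, since $\ell(R)$ is a summand), and for each $j$ one uses that $\sum_{Q:\,\ell(Q)=\delta^k,\,D(Q,R)\sim 2^j\ell(R)}\mu(Q)\le \mu(B(c_R, C\,2^j\ell(R)))\le\lambda(c_R, C2^j\ell(R))\lesssim 2^{jd}\sup_{z\in?}\lambda(\ldots)$ — the point being that the $Q$'s of a fixed generation partition, so their measures add up to the measure of a controlled ball. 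One then applies Cauchy--Schwarz in the count of $Q$'s at scale $\delta^k$ within the annulus (bounded by $2^{jn}(2^j\ell(R)/\delta^k)^{n}$ via geometric doubling) to convert $\sum\mu(Q)^{1/2}$ into $(\#)^{1/2}(\sum\mu(Q))^{1/2}$, and the resulting powers of $2^j$, of $\ell(R)/\delta^k$, and of $C_\lambda$ must again combine into convergent geometric series in both $j\ge0$ and $k\ge\log_\delta\ell(R)$; the exponent of $\delta^k$ is positive precisely because $\alpha/2 - (\text{stuff})>0$, again by the choice of $\gamma$. Once both one-sided sums are bounded by the asserted Schur weights, the Proposition follows from the symmetric Schur test and Cauchy--Schwarz, exactly as in \cite{NTV}; the non-homogeneous complications are entirely contained in replacing Lebesgue measure of annuli by $\lambda$-bounds and in tracking that the dependence on the basepoint $z\in Q$ in $\sup_{z\in Q}\lambda(z,\cdot)$ never costs more than a bounded factor, which holds because $D(Q,R)$ dominates $\diam Q$.
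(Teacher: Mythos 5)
Your overall plan (a Schur test for the nonnegative matrix $T_{QR}$, annuli in $D(Q,R)$, doubling of $\lambda$, center-shifting, and the fact that fixed-generation cubes partition $X$ so their measures add up to the measure of a controlled ball) can be made to work, but as written it contains steps that fail. First, the proposed weight $w_Q=\mu(Q)^{1/2}\ell(Q)^{\alpha/2}$ does not satisfy the row bound $\sum_{R:\ell(R)\ge\ell(Q)}T_{QR}w_R\lesssim w_Q$: for a fixed $Q$, each coarser generation $\ell(R)=\delta^k\ge\ell(Q)$ contributes $\gtrsim w_Q$ (already in $\R^n$ with Lebesgue measure: at $D\sim\ell(R)$ the factor $\ell(Q)^{\alpha/2}\ell(R)^{\alpha/2}D^{-\alpha}\cdot\ell(R)^{\alpha/2}$ exactly cancels against $\ell(Q)^{\alpha/2}$ after the $\mu$-versus-$\lambda$ cancellation), so the sum over the infinitely many coarser scales diverges. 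The exponent must be taken \emph{strictly below} $\alpha/2$ -- e.g. $w_Q=\mu(Q)^{1/2}\ell(Q)^{\alpha/2-\varepsilon}$, or most simply $w_Q=\mu(Q)^{1/2}$, using $\ell(Q)^{\alpha/2}\ell(R)^{\alpha/2}D^{-\alpha}\le(\ell(Q)/\ell(R))^{\alpha/2}(\ell(R)/D)^{\alpha/2}$ (valid since $D\ge\ell(R)$) to split the decay between the scale gap and the distance; your hedge ``an extra $\ell(Q)^{\varepsilon}$'' goes in the wrong direction and worsens the divergent side. Second, the relation $\gamma d+\gamma\alpha=\alpha/2$ has no role in this Proposition: it was already spent in Lemma \ref{ch6l} to produce the matrix, and here only $D(Q,R)\ge\ell(R)$, the doubling of $\lambda$ and the partition property are needed. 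Third, the Cauchy--Schwarz-in-the-count step you propose for the second one-sided bound is both unnecessary and fatal: with a weight containing $\mu(Q)^{1/2}$, the column sum carries $\mu(Q)$ to the first power, so disjointness of same-generation cubes bounds each annulus by $\mu$ of a controlled ball over $\lambda$ of a comparable ball (after the center shift), which is $\lesssim 1$ with no counting at all; if instead you introduce the cardinality $\sim(2^j\ell(R)/\ell(Q))^n$ and Cauchy--Schwarz, you pick up $(\ell(R)/\ell(Q))^{n/2}$, which cannot be absorbed by $(\ell(Q)/\ell(R))^{\alpha/2}$ once $n>\alpha$ -- in the non-homogeneous setting, counting cubes is the wrong currency because individual cube measures are not comparable.

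For comparison, the paper's proof avoids choosing Schur weights altogether: it fixes both generations $\ell(Q)=\delta^{k+m}$, $\ell(R)=\delta^{k}$, rewrites that block as an integral operator with kernel $K_{m,k}(x,y)$ on $(X,\mu)$ (using that the cubes of a fixed generation partition $X$, so $\sum_{\ell(Q)=\delta^{k+m}}x_Q^2=\|h_1\|_{L^2(\mu)}^2$), proves $\int K_{m,k}(x,y)\,d\mu(x)\lesssim\delta^{\alpha m/2}$ and the symmetric bound by exactly the annuli-plus-center-shift argument you have in mind, applies Schur's lemma for integral kernels, and then sums over $k$ by Cauchy--Schwarz and over $m\ge 0$ geometrically, the gain $\delta^{\alpha m/2}$ in the scale gap being what makes the last sum converge. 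If you repair the weight as indicated, your discrete Schur test gives an alternative proof of the same estimate; but as stated, the divergent weight and the counting step are genuine gaps.
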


\begin{proof}
We assume first that $\ell(Q) = \delta^m \ell(R)$ for some $m = 0, 1, 2, \ldots$ and
then also that $\ell(R) = \delta^k$ for some $k \in \Z$. Define the kernel
\begin{displaymath}
K_{m,k}(x,y) = \sum_{\ell(Q) = \delta^{k+m},\, \ell(R) = \delta^k} T_{QR}\mu(Q)^{-1/2}\mu(R)^{-1/2}\chi_Q(x)\chi_R(y)
\end{displaymath}
and set
\begin{displaymath}
h_1 = \sum_{\ell(Q) = \delta^{k+m}} \mu(Q)^{-1/2}x_Q\chi_Q, \qquad h_2 = \sum_{\ell(R) = \delta^k} \mu(R)^{-1/2}y_R\chi_R.
\end{displaymath}
Note that
\begin{displaymath}
\sum_{\ell(Q) = \delta^{k+m}, \, \ell(R) = \delta^k} T_{QR}x_Qy_R  = \int_X \int_X K_{m,k}(x,y)h_1(x)h_2(y)\,d\mu(x)\,d\mu(y)
\end{displaymath}
and
\begin{displaymath}
\Big(\sum_{\ell(Q) = \delta^{k+m}} x_Q^2\Big)^{1/2} = \|h_1\|_{L^2(\mu)} \qquad \textrm{and} \qquad \Big(\sum_{\ell(R) = \delta^k} y_R^2\Big)^{1/2} = \|h_2\|_{L^2(\mu)}.
\end{displaymath}
Writing out the definitions one has
\begin{displaymath}
K_{m,k}(x,y) = \sum_{\ell(Q) = \delta^{k+m},\, \ell(R) = \delta^k} \frac{\ell(Q)^{\alpha/2}\ell(R)^{\alpha/2}}{D(Q,R)^{\alpha}\sup_{z \in Q} \lambda(z, D(Q,R))}\chi_Q(x)\chi_R(y).
\end{displaymath}

Consider some pair $(x,y)$. Then there exists one and only one pair $(Q_x,R_y)$ for which $\ell(Q_x) = \delta^{k+m}$,  $\ell(R_y) = \delta^k$, $x \in Q_x$ and $y \in R_y$, and so
\begin{displaymath}
K_{m,k}(x,y) =  \frac{\delta^{\alpha m/2}\delta^{\alpha k}}{D(Q_x,R_y)^{\alpha}\sup_{z \in Q_x} \lambda(z, D(Q_x,R_y))}.
\end{displaymath}
We want to prove that
\begin{equation}\label{akernele}
\int_X K_{m,k}(x,y)\,d\mu(x) \lesssim \delta^{\alpha m/2} \qquad \textrm{and} \qquad \int_X K_{m,k}(x,y)\,d\mu(y) \lesssim \delta^{\alpha m/2}.
\end{equation}

Let us only deal with the first term in detail---it is actually the bit harder of the two. The second integral is estimated basically in the same way as we now deal with the first integral,
but one does not have to go through the trouble of fiddling with the centers (just use directly that $\sup_{z \in Q_x} \lambda(z, D(Q_x,R_y)) \ge \lambda(x, D(Q_x,R_y))$).
We have
\begin{displaymath}
\int_X K_{m,k}(x,y)\,d\mu(x) = \Big(\int_{B(y, \delta^k)} + \int_{X \setminus B(y, \delta^k)}\Big) K_{m,k}(x,y)\,d\mu(x).
\end{displaymath}
Note that $D(Q_x, R_y) \ge \ell(R_y) = \delta^k$, and so
\begin{displaymath}
\int_{B(y, \delta^k)} K_{m,k}(x,y)\,d\mu(x) \le \delta^{\alpha m/2}\delta^{\alpha k} \int_{B(y, \delta^k)} \frac{d\mu(x)}{\delta^{\alpha k}\sup_{z \in Q_x} \lambda(z, \delta^k)}.
\end{displaymath}
We have that $Q_x \subset B(y, 2C_0\delta^k)$ for every $x \in B(y, \delta^k)$, and so $\sup_{z \in Q_x} \lambda(z, \delta^k) \ge \inf_{z \in B(y, 2C_0\delta^k)} \lambda(z, \delta^k)$. This yields
\begin{displaymath}
\int_{B(y, \delta^k)} K_{m,k}(x,y)\,d\mu(x) \le \delta^{\alpha m/2} \frac{\mu(B(y,\delta^k))}{\inf_{z \in B(y, 2C_0\delta^k)} \lambda(z, \delta^k)}.
\end{displaymath}
We then have that $B(y, \delta^k) \subset B(z, 3C_0\delta^k)$ for every $z \in B(y, 2C_0\delta^k)$ yielding that $\mu(B(y, \delta^k)) \le \mu(B(z, 3C_0\delta^k)) \le
\lambda(z, 3C_0\delta^k) \lesssim \lambda(z, \delta^k)$. This gives that
\begin{displaymath}
\int_{B(y, \delta^k)} K_{m,k}(x,y)\,d\mu(x) \lesssim \delta^{\alpha m/2}.
\end{displaymath}

We cannot directly employ Lemma~\ref{baest} to deal with the integral over $X \setminus B(y, \delta^k)$. However, we can use its proof together with similar gimmicks as with the previous term.
Note that $d(x,y) \lesssim D(Q_x, R_y)$ to get that
\begin{align*}
\int_{X \setminus B(y,\delta^k)} K_{m,k}&(x,y)\,d\mu(x)\\
& \lesssim \delta^{\alpha m/2}\delta^{\alpha k} \sum_{j=0}^{\infty} \int_{2^j\delta^k \le d(x,y) < 2^{j+1}\delta^k} \frac{d(x,y)^{-\alpha}}{\sup_{z \in Q_x} \lambda(z, d(x,y))}\,d\mu(x).
\end{align*}
If $2^j\delta^k \le d(x,y) < 2^{j+1}\delta^k$, we have $Q_x \subset B(y, C_02^{j+2}\delta^k)$, and therefore it holds $\sup_{z \in Q_x} \lambda(z, d(x,y)) \ge \inf_{z \in B(y, C_02^{j+2}\delta^k)} \lambda(z, 2^j\delta^k)$.
This establishes that
\begin{displaymath}
\int_{X \setminus B(y,\delta^k)} K_{m,k}(x,y)\,d\mu(x) \lesssim \delta^{\alpha m/2} \sum_{j=0}^{\infty} 2^{-\alpha j}\frac{\mu(B(y, 2^{j+1}\delta^k))}{\inf_{z \in B(y, C_02^{j+2}\delta^k)} \lambda(z, 2^j\delta^k)}.
\end{displaymath}
If $z \in B(y, C_02^{j+2}\delta^k)$, we have $B(y, 2^{j+1}\delta^k) \subset B(z, C_02^{j+3}\delta^k)$, and thus it holds
$\mu(B(y, 2^{j+1}\delta^k)) \le \mu(B(z, C_02^{j+3}\delta^k)) \le \lambda(z, C_02^{j+3}\delta^k) \lesssim \lambda(z, 2^j\delta^k)$. This proves that
\begin{displaymath}
\int_{X \setminus B(y,\delta^k)} K_{m,k}(x,y)\,d\mu(x) \lesssim \delta^{\alpha m/2}.
\end{displaymath}

We have established \eqref{akernele}. Schur's lemma then gives that
\begin{displaymath}
\int_X \int_X K_{m,k}(x,y)h_1(x)h_2(y)\,d\mu(x)\,d\mu(y) \lesssim \delta^{\alpha m/2} \|h_1\|_{L^2(\mu)}\|h_2\|_{L^2(\mu)}.
\end{displaymath}
As noted above this is the same as
\begin{displaymath}
\sum_{\ell(Q) = \delta^{k+m}, \, \ell(R) = \delta^k} T_{QR}x_Qy_R \lesssim \delta^{\alpha m/2} \Big(\sum_{\ell(Q) = \delta^{k+m}} x_Q^2\Big)^{1/2} \Big(\sum_{\ell(R) = \delta^k} y_R^2\Big)^{1/2}.
\end{displaymath}
Sum this over $k \in \Z$, use the Cauchy--Schwarz inequality, and then sum over $m = 0, 1, 2, \ldots$, to get that
\begin{displaymath}
\sum_{Q, \, R} T_{QR}x_Qy_R = \sum_{\ell(Q) \le \ell(R)} T_{QR}x_Qy_R \lesssim \Big( \sum_{Q} x_Q^2 \Big)^{1/2}\Big(\sum_{R}  y_R^2\Big)^{1/2}.
\end{displaymath}
\end{proof}

\section{Paraproducts and cubes well inside another cube}

We begin by proving the following lemma which is needed later in proving that a certain paraproduct is bounded.

\begin{lem}
Suppose that $Q \in \mathcal{D}$ is fixed and that $b$ is some pseudoaccretive function. It holds that
\begin{displaymath}
\mathop{\mathop{\sum_{R \in \mathcal{D}': \, R \subset Q}}_{\ell(R) \le \delta^r \ell(Q)}}_{d(R, X \setminus Q) \ge CC_0\kappa\ell(R)} \|\Delta^{b} _R (b\varphi)\|_{L^2(\mu)}^2 \lesssim \mu(Q)\|\varphi\|_{\textrm{BMO}^2_{\kappa}(\mu)}^2
\end{displaymath}
if $\varphi \in \textrm{BMO}^2_{\kappa}(\mu)$ and $r$ is so large that
\begin{displaymath}
\delta^r \le \frac{C_1}{CC_0\kappa + C_0 + C_3}.
\end{displaymath}
\end{lem}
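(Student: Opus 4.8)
The plan is to group the cubes appearing in the sum into the $\mathcal D'$-subtrees rooted at certain maximal cubes, to replace $b\varphi$ on each such subtree by a function differing from it only by a constant multiple of $b$ (which the martingale differences do not see), and then to invoke the quasi-orthogonality of the $b$-adapted martingale differences. Write $\mathcal R$ for the collection of cubes $R$ indexing the sum. First I would check that $\mathcal R$ is closed under passing to $\mathcal D'$-children: if $R\in\mathcal R$ and $R'\subset R$ is a child, then $R'\subset Q$, $\textrm{gen}(R')\ge\textrm{gen}(R)$, and $d(R',X\setminus Q)\ge d(R,X\setminus Q)\ge CC_0\kappa\ell(R)\ge CC_0\kappa\ell(R')$. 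Since every member of $\mathcal R$ has generation at least $\textrm{gen}(Q)+r$, the collection $\mathcal R$ is the disjoint union of the subtrees rooted at its pairwise disjoint maximal cubes $R_i$. For each $i$ I would take the ball $B_i:=B(x_{R_i},C_0\ell(R_i))$ around the centre $x_{R_i}$ of $R_i$; then $R_i\subset B_i$ (as $\overline{R_i}\subset B(x_{R_i},5\ell(R_i))$ and $C_0=10$), and $x_{R_i}\in R_i$ together with the separation hypothesis gives $d(x_{R_i},X\setminus Q)\ge CC_0\kappa\ell(R_i)>C_0\kappa\ell(R_i)$, hence $\kappa B_i\subset Q$. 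Taking $\varphi_{B_i}$ to be a $\textrm{BMO}^2_\kappa(\mu)$-constant of $\varphi$ for $B_i$, I would set
\begin{displaymath}
g:=\sum_i\chi_{R_i}\,b\,(\varphi-\varphi_{B_i}),
\end{displaymath}
which is well defined and supported in $\bigcup_iR_i$ because the $R_i$ are disjoint.

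The key point, and the hard part, will be the covering estimate $\sum_i\mu(\kappa B_i)\lesssim\mu(Q)$; together with the disjointness of the $R_i\subset B_i$ and the defining $\textrm{BMO}^2_\kappa$ bound $\int_{B_i}|\varphi-\varphi_{B_i}|^2\,d\mu\le\|\varphi\|_{\textrm{BMO}^2_\kappa(\mu)}^2\,\mu(\kappa B_i)$, it yields $\|g\|_{L^2(\mu)}^2\lesssim\mu(Q)\|\varphi\|_{\textrm{BMO}^2_\kappa(\mu)}^2$. This is where the separation hypothesis and the lower bound on $r$ are genuinely used. I would first show that every maximal cube obeys $d(R_i,X\setminus Q)\sim\ell(R_i)$, with constants depending only on the admissible parameters: the lower bound is the defining inequality $d(R_i,X\setminus Q)\ge CC_0\kappa\ell(R_i)$, while for the upper bound maximality forces the parent $\hat R_i$ out of $\mathcal R$, so either $\hat R_i\not\subset Q$, or $\textrm{gen}(\hat R_i)<\textrm{gen}(Q)+r$ (whence $\textrm{gen}(R_i)=\textrm{gen}(Q)+r$ and $d(R_i,X\setminus Q)\le\diam Q<C_0\delta^{-r}\ell(R_i)$), or $d(\hat R_i,X\setminus Q)<CC_0\kappa\ell(\hat R_i)$; using $\overline{R^k_\alpha}\subset B(x^k_\alpha,5\delta^k)$ and $d(x^k_\alpha,x^{k-1}_\beta)<4\delta^{k-1}$ for $(k,\alpha)\le(k-1,\beta)$, the first and third alternatives also give $d(R_i,X\setminus Q)\lesssim\delta^{-1}CC_0\kappa\,\ell(R_i)$. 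With $d(R_i,X\setminus Q)\sim\ell(R_i)$ in hand, a standard Whitney-type argument shows the balls $\kappa B_i$ have bounded overlap: a point lying in several of them forces the corresponding $\ell(R_i)$ to be mutually comparable, so the disjoint balls $B(x_{R_i},C_1\ell(R_i))\subset R_i$ have centres in a single ball of comparable radius, and geometric doubling caps their number. Since each $\kappa B_i\subset Q$, integrating $\sum_i\chi_{\kappa B_i}$ over $X$ gives $\sum_i\mu(\kappa B_i)\lesssim\mu(Q)$.

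To conclude, I would observe that for $R\in\mathcal R$ with $R\subset R_i$ one has $g=b(\varphi-\varphi_{B_i})$ on $R$, so, since $\Delta^b_R$ depends only on the restriction of its argument to $R$ and annihilates constant multiples of $b$, $\Delta^b_R g=\Delta^b_R(b(\varphi-\varphi_{B_i}))=\Delta^b_R(b\varphi)$. Therefore
\begin{displaymath}
\sum_{R\in\mathcal R}\|\Delta^b_R(b\varphi)\|_{L^2(\mu)}^2=\sum_{R\in\mathcal R}\|\Delta^b_R g\|_{L^2(\mu)}^2\le\sum_{\substack{R\in\mathcal D'\\\textrm{gen}(R)\ge\textrm{gen}(Q)+r}}\|\Delta^b_R g\|_{L^2(\mu)}^2\lesssim\|g\|_{L^2(\mu)}^2\lesssim\mu(Q)\|\varphi\|_{\textrm{BMO}^2_\kappa(\mu)}^2,
\end{displaymath}
the middle inequality being the $b$-adapted martingale difference inequality recorded above, which holds for any pseudoaccretive function on either grid (with constant depending only on its accretivity constant and $L^\infty$-norm), the suppressed $E^b$-term being non-negative. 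The one real obstacle is the covering estimate $\sum_i\mu(\kappa B_i)\lesssim\mu(Q)$ of the second paragraph; the child-closedness of $\mathcal R$, the localisation of $\Delta^b_R$, and the martingale inequality are all routine.
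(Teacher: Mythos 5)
Your proposal is correct and takes essentially the same route as the paper: the paper's ``chosen'' cubes are exactly your maximal cubes $R_i$, it attaches the same balls $B(w_{R_i},C_0\ell(R_i))$ with $\kappa B_i\subset Q$, proves the same bounded-overlap estimate by pinning $d(x,X\setminus Q)\sim\ell(R_i)$ on $\kappa B_i$ through the failure of the parent to satisfy the distance condition, and concludes by subtracting the BMO constants (invisible to $\Delta^b_R$) and applying the adapted martingale square-function bound. The only cosmetic differences are your description of the decomposition via maximality instead of the paper's top-down selection, and cutting off with $\chi_{R_i}$ rather than $\chi_{B_{R_i}}$ (the paper uses the explicit largeness of $r$ only to exhibit cubes of generation $\mathrm{gen}(Q)+r$ in the family, which your argument does not need).
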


Here the implied constants are exceptionally allowed to depend on the function $b$ in the obvious way.

\begin{proof}
We start by constructing a Whitney type decomposition using cubes and then we shall associate to each such cube in the covering a certain ball -- these balls, as we shall see,
will also have finite overlap (even when multiplied with the constant $\kappa$) because of the geometry of the construction and also because $X$ is geometrically doubling.

We first prove that there exist cubes $R \in \mathcal{D'}$ for which $R \subset Q$, $\ell(R) = \delta^r\ell(Q)$ and $d(R, X \setminus Q) \ge CC_0\kappa\ell(R)$.
Denote the center of $Q$ by $z$ and recall that we have $B(z, C_1\ell(Q)) \subset Q$. Choose some point $w$ which is the center of a $\mathcal{D}'$-cube $R$ of generation gen$(Q)+r$
and satisfies $d(w, z) < C_3\ell(R)$. Suppose that $x \in R \subset B(w, C_0\ell(R))$ and note that $d(x,z) \le d(x,w) + d(w, z) < (C_0 + C_3)\ell(R)$, that is
$R \subset B(z, (C_0+C_3)\ell(R))$. Suppose that $y \in X \setminus Q$ and $x \in R$. In this case we have $d(y,z) \ge C_1\ell(Q)$ and $d(x,z) < (C_0+C_3)\ell(R)$, which yields
\begin{displaymath}
d(x,y) \ge d(y,z) - d(x,z) \ge (C_1\delta^{-r} - C_0-C_3)\ell(R) \ge CC_0\kappa\ell(R).
\end{displaymath}
Thus, $d(R, X \setminus Q) \ge CC_0\kappa\ell(R)$. We choose all such cubes $R$. Then we choose all those $\mathcal{D}'$-cubes $R$ of the next generation which are not subcubes of the previously chosen cubes and which still
satisfy the condition that $d(R, X \setminus Q) \ge CC_0\kappa\ell(R)$. We continue in this way and obtain a disjoint collection of $\mathcal{D}'$-cubes $R$, which have the property that any cube $R'$ in the sum
\begin{displaymath}
\mathop{\mathop{\sum_{R' \in \mathcal{D}': \, R' \subset Q}}_{\ell(R') \le \delta^r \ell(Q)}}_{d(R', X \setminus Q) \ge CC_0\kappa\ell(R')} \|\Delta^{b} _{R'}(b\varphi)\|_{L^2(\mu)}
\end{displaymath}
is contained in one of them. To exploit the BMO condition we want a covering consisting of suitable balls though. To this end, we associate to each chosen cube $R$ the ball
$B_R = B(w_R, C_0\ell(R)) \supset R$ which is centered at the center $w_R$ of the cube $R$ and which has radius $C_0\ell(R)$.

We now wish to demonstrate that every $x \in Q$ belongs to $\lesssim 1$ balls $\kappa B_R$. We first prove that $x$ can belong to only $\lesssim 1$ balls $\kappa B_R$ associated to a fixed
generation $k \ge \textrm{gen}(Q) + r$ of the chosen cubes. Indeed, suppose that $x \in \kappa B_{R^k_i}$ for some collection $i$. This implies that $w_{R^k_i} \in B(x, C_0\kappa \delta^k)$ for all $i$.
This means that the ball $B(x, C_0\kappa \delta^k)$ contains the centers $w_{R^k_i}$ of the disjoint balls $B(w_{R^k_i}, C_1\delta^k) = B(w_{R^k_i}, C_1/(C_0\kappa) \cdot C_0\kappa \delta^k)$.
As $X$ is geometrically doubling, we have that $\#i \le N(C_1/(C_0\kappa))^{-n} \lesssim 1$.

We then prove that if $\kappa B_{R^k_i} \cap \kappa B_{R^l_j} \ne \emptyset$ (where $R^k_i$ and
$R^l_j$ are chosen cubes, $k, l > r$), then $|k - l| \lesssim 1$. This only utilizes the geometry of the construction. We prove a certain auxiliary estimate from which this follows.
Suppose that $R^k$ is a chosen cube of generation $k > \textrm{gen}(Q) + r$ and that
$x \in \kappa B_{R^k}$. As $R^k$ is a chosen cube, its dyadic parent $R^{k*}$ has to satisfy $d(R^{k*}, X \setminus Q) < CC_0\kappa \delta^{k-1}$. We take some point $y \in R^{k*}$ for which
$d(y, X \setminus Q) \le CC_0\kappa \delta^{k-1}$. Let $w_{R^k}$ be the center of $R^k$ and notice that we now have
\begin{eqnarray*}
d(x, X \setminus Q) & \le & d(x,y) + d(y, X \setminus Q) \\
& \le & d(x, w_{R^k}) + d(y, w_{R^k}) + d(y, X \setminus Q) \\
& \le & C_0\kappa \delta^k + C_0\delta^{k-1} + CC_0\kappa \delta^{k-1} \\
& \le & 3CC_0\kappa\delta^{k-1}. 
\end{eqnarray*}

To the other direction, there holds (we have $C \ge 2$)
\begin{displaymath}
d(x, X \setminus Q) \ge d(w_{R^k}, X \setminus Q) - d(x, w_{R^k}) \ge CC_0\kappa \delta^k  - C_0\kappa \delta^k \ge C_0\kappa \delta^k.
\end{displaymath}
We have established that if $R$ is a chosen cube for which gen$(R) = k > \textrm{gen}(Q) + r$, then
\begin{displaymath}
C_0\kappa \delta^k \le d(x, X \setminus Q) \le 3CC_0\kappa\delta^{k-1}
\end{displaymath}
holds for all $x \in \kappa B_R$. Taking logarithms one sees that this fixes $k$ to a certain finite range.

We have now done more than enough to show that for every $x \in Q$ one has
$\#\{R: x \in \kappa B_R\} \lesssim 1$, which we use to conclude that
\begin{displaymath}
\sum_R \mu(\kappa B_R) = \int_Q \sum_R \chi_{\kappa B_R} \,d\mu \lesssim \mu(Q),
\end{displaymath}
where we sum over the chosen cubes $R$. We have
\begin{eqnarray*}
\mathop{\mathop{\sum_{R' \in \mathcal{D}': \, R' \subset Q}}_{\ell(R') \le \delta^r \ell(Q)}}_{d(R', X \setminus Q) \ge CC_0\kappa\ell(R')} \|\Delta^{b} _{R'} (b\varphi)\|_{L^2(\mu)}^2
& = & \sum_R \sum_{R' \subset R} \|\Delta^{b}_{R'}(\chi_{B_R}b(\varphi - \varphi_{B_R}))\|_{L^2(\mu)}^2 \\
& \lesssim & \sum_R \int_{B_R} |\varphi - \varphi_{B_R}|^2\,d\mu \\
& \lesssim & \sum_R \mu(\kappa B_R)\|\varphi\|_{\textrm{BMO}^2_{\kappa}(\mu)}^2 \\
& \lesssim &\mu(Q)\|\varphi\|_{\textrm{BMO}^2_{\kappa}(\mu)}^2.
\end{eqnarray*}
\end{proof}

Let us define the paraproduct
\begin{displaymath}
\Pi f = \sum_{R' \in \mathcal{D}'} \mathop{\mathop{\sum_{Q' \in \mathcal{D}:\,Q' \subset R'}}_{\ell(Q') = \delta^r\ell(R')}}_{d(Q', X \setminus R') \ge CC_0\kappa \ell(Q')} (E_{R'} b_2)^{-1} \cdot E_{R'} f \cdot (\Delta_{Q'}^{b_1})^*(T^*b_2).
\end{displaymath}
We shall always assume that $r$ is at least as large as is required by the previous lemma. However, we make several further assumptions about it later. Of course, basically it could be fixed at the very beginning, and so it is not a problem
if we let the implied constants to depend also on $r$.
\begin{thm}\label{parabounded}
The paraproduct $\Pi$ is bounded on $L^2(\mu)$.
\end{thm}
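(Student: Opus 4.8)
The plan is to verify the standard conditions under which a dyadic paraproduct is bounded on $L^2(\mu)$, namely that the ``coefficients'' of $\Pi$ satisfy a Carleson-type packing condition, and then invoke Carleson's embedding theorem as proven above. Writing $a_{Q'} := \|(\Delta_{Q'}^{b_1})^*(T^*b_2)\|_{L^2(\mu)}^2 / \mu(Q')$ for each cube $Q' \in \mathcal{D}$ occurring in the sum defining $\Pi$ (and $a_{Q'}=0$ otherwise), the first and main task is to establish the Carleson condition
\begin{displaymath}
\sum_{(\ell,\beta)\le(k,\alpha)} a_{Q^{\ell}_{\beta}}\mu(Q^{\ell}_{\beta}) \lesssim \mu(Q^k_{\alpha})
\end{displaymath}
for every fixed cube. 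Here is where the previous lemma does the heavy lifting: taking $b=b_1$ and $\varphi$ a suitable multiple of the function whose BMO$^2_\kappa$ norm is controlled, the sum of $\|\Delta^{b_1}_{R'}(b_1\varphi)\|_{L^2(\mu)}^2$ over the admissible subcubes $R'$ of a fixed $Q$ is $\lesssim \mu(Q)\|\varphi\|_{\mathrm{BMO}^2_\kappa(\mu)}^2$. One applies this with $\varphi$ essentially equal to $T^*b_2$: indeed $(\Delta_{Q'}^{b_1})^*(T^*b_2)$ is, after unwinding the duality, $\Delta_{Q'}^{b_1}$ applied to $b_1$ times (a localized, mean-subtracted piece of) $T^*b_2$, exactly the object estimated in the lemma, and $\|T^*b_2\|_{\mathrm{BMO}^2_\kappa(\mu)}\le 1$ by the running normalization. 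This yields the Carleson packing condition with an implied constant of the asserted dependence.

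Next I would feed this into Carleson's embedding theorem. For $f,g\in L^2(\mu)$, pair $\Pi f$ against $g$:
\begin{displaymath}
\langle \Pi f, g\rangle = \sum_{R'}\sum_{Q'} (E_{R'}b_2)^{-1}\, E_{R'}f \cdot \langle (\Delta_{Q'}^{b_1})^*(T^*b_2),\, g\rangle
 = \sum_{R'}\sum_{Q'} (E_{R'}b_2)^{-1}\, E_{R'}f \cdot \langle T^*b_2,\, \Delta_{Q'}^{b_1}g\rangle.
\end{displaymath}
One groups the sum by the outer cube $R'$, uses $|E_{R'}f|\lesssim \langle |f|\rangle_{R'}$ (accretivity of $b_2$ bounds $(E_{R'}b_2)^{-1}$ from $\|b_2\|_\infty$ and its accretivity constant), and applies Cauchy--Schwarz in $Q'$ together with the almost-orthogonality $\sum_{Q'}\|\Delta_{Q'}^{b_1}g\|^2\lesssim\|g\|^2$ and the Carleson bound above on the quantities $a_{Q'}$. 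Carleson's embedding controls $\sum_{R'}|\langle f\rangle_{R'}|^2 (\cdots)\mu(R')$ by $\|f\|_{L^2(\mu)}^2$ once one checks that the weights attached to the $R'$ inherit a Carleson condition from the $a_{Q'}$ — this is routine since each $Q'$ is associated to a single $R'$ with $\ell(Q')=\delta^r\ell(R')$, so the packing transfers up one fixed number of scales with a loss of at most a constant. Combining, $|\langle \Pi f,g\rangle|\lesssim \|f\|_{L^2(\mu)}\|g\|_{L^2(\mu)}$, which is the claim.

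The main obstacle is the first step: correctly identifying $(\Delta_{Q'}^{b_1})^*(T^*b_2)$ with an object covered by the preceding lemma, and in particular checking the geometric side-conditions — that the admissible cubes $Q'$ in the paraproduct (those with $Q'\subset R'$, $\ell(Q')=\delta^r\ell(R')$, $d(Q',X\setminus R')\ge CC_0\kappa\ell(Q')$) are precisely the ones the lemma sums over, and that the CZ kernel representation applies to evaluate $\Delta^{b_1}_{Q'}$ acting on $T^*b_2$ restricted to $B_{R'}$ with the constant function subtracted (this uses that $Q'$ sits well inside $R'$, so $T^*b_2$ on $Q'$ only sees the kernel away from the diagonal for the far part, and the weak boundedness property for the near part). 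Once the dictionary between the paraproduct's summation constraints and the Whitney cubes of the lemma is pinned down, the rest is the standard Carleson-embedding argument for paraproducts, carried out verbatim as in \cite[\S5]{NTV}.
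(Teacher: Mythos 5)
Your overall skeleton is the same as the paper's intended argument: use the identity $(\Delta_{Q'}^{b_1})^*(T^*b_2)=b_1^{-1}\Delta_{Q'}^{b_1}(b_1T^*b_2)$ to recognize the paraproduct coefficients as objects covered by the preceding lemma with $\varphi=T^*b_2$, then combine a Carleson packing condition, Carleson's embedding theorem, and the quasi-orthogonality $\sum_{Q'}\|\Delta_{Q'}^{b_1}g\|_{L^2(\mu)}^2\lesssim\|g\|_{L^2(\mu)}^2$ from the adapted martingale decomposition. (Your side-worry about needing the kernel representation and the weak boundedness property to make sense of $\Delta^{b_1}_{Q'}(b_1T^*b_2)$ is unnecessary: $T$ is a priori $L^2(\mu)$-bounded, and the localization and mean-subtraction are exactly what the lemma's proof handles.)

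However, there is a genuine gap in how you set up the Carleson condition. You formulate the packing over the grid $\mathcal{D}$ (sums over $\mathcal{D}$-descendants $Q^{\ell}_{\beta}\le Q^k_{\alpha}$) and then claim the condition needed for the embedding over the $R'$-averages ``transfers routinely'' because each $Q'$ has a unique associated $R'$ one fixed number of scales up. Neither half of this works as stated. First, the $\mathcal{D}$-grid packing does not follow from the lemma: a paraproduct cube $Q'$ is separated from $X\setminus R'$ for its $\mathcal{D}'$-parent $R'$, but need not satisfy $d(Q',X\setminus Q^k_{\alpha})\ge CC_0\kappa\ell(Q')$ (nor even $\ell(Q')\le\delta^r\ell(Q^k_{\alpha})$) relative to a $\mathcal{D}$-ancestor $Q^k_{\alpha}$, and these side-conditions are hypotheses of the lemma. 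Second, the transfer between the two independent grids is exactly the kind of step that fails without doubling: the Carleson embedding is applied to the averages $\langle f\rangle_{R'}$ over $\mathcal{D}'$-cubes, so the packing must be over $\mathcal{D}'$-ancestors, and a packing over $\mathcal{D}$-cubes cannot be converted by covering a $\mathcal{D}'$-cube with comparable $\mathcal{D}$-cubes, since for a non-doubling $\mu$ those overlapping cubes may have much larger measure. The correct (and simpler) move is to skip the $\mathcal{D}$-grid condition entirely and apply the lemma with the fixed big cube being a $\mathcal{D}'$-cube $R'_0$ (the lemma holds with the roles of the two grids interchanged, with $b=b_1$): every paraproduct cube $Q'$ attached to some $R''\subset R'_0$ automatically satisfies $Q'\subset R'_0$, $\ell(Q')=\delta^r\ell(R'')\le\delta^r\ell(R'_0)$ and $d(Q',X\setminus R'_0)\ge d(Q',X\setminus R'')\ge CC_0\kappa\ell(Q')$, so the lemma directly gives $\sum_{R''\subset R'_0}\sum_{Q'\,\mathrm{assoc.}\,R''}\|(\Delta_{Q'}^{b_1})^*(T^*b_2)\|_{L^2(\mu)}^2\lesssim\mu(R'_0)$, which is the $\mathcal{D}'$-grid Carleson condition you actually need; with that replacement the rest of your argument goes through.
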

\begin{proof}
This follows from the previous lemma via the $L^2(\mu)$ norm estimate related to the adapted martingale difference decomposition,
Carleson's embedding theorem and the fact that $(\Delta_{Q'}^{b_1})^*(T^*b_2) = b_1^{-1}\Delta_{Q'}^{b_1}(b_1T^*b_2)$.
\end{proof}
We introduce the concept of good cubes in more detail now. Recall the definition of $\gamma$ from the previous chapter.
Consider a cube $Q \in \mathcal{D}$. We say that $Q$ is good if for any cube $R \in \mathcal{D}'$ for which we have $\ell(Q) \le \delta^r\ell(R)$, we have
either $d(Q, R) \ge \ell(Q)^{\gamma}\ell(R)^{1-\gamma}$ or $d(Q, X \setminus R) \ge \ell(Q)^{\gamma}\ell(R)^{1-\gamma}$.
We denote this set by $\mathcal{D}_{\textrm{good}}$, and the rest are denoted by $\mathcal{D}_{\textrm{bad}}$.

Let us now fix $Q \in \mathcal{D}_{\textrm{good}}$ and $R \in \mathcal{D}'$ so that $Q \subset R$ and $\ell(Q) < \delta^r\ell(R)$. Let us also fix the child $R_1 \subset R$ for which $Q \subset R_1$
and let us denote the other children of $R$ by $R_i$. We have $\#i \lesssim 1$.
We assume that we are given two functions of the form
\begin{displaymath}
\varphi_Q = \mathop{\sum_{Q_j \in \mathcal{D}:\,Q_j \subset Q}}_{\ell(Q_j) = \delta\ell(Q)} A_{Q_j}\chi_{Q_j}b_1
\end{displaymath}
and
\begin{displaymath}
\psi_R = \mathop{\sum_{R_i \in \mathcal{D}':\,R_i \subset R}}_{\ell(R_i) = \delta \ell(R)} B_{R_i}\chi_{R_i}b_2.
\end{displaymath}
We demand that $\int \varphi_Q\,d\mu = 0$.
We aim to prove that
\begin{equation}\label{pares}
|\langle (T-\Pi^*)\varphi_Q, \psi_R\rangle| \lesssim \Big( \frac{\ell(Q)}{\ell(R)}\Big)^{\alpha/2}\Big(\frac{\mu(Q)}{\mu(R_1)}\Big)^{1/2}\|\varphi_Q\|_{L^2(\mu)}\|\psi_R\|_{L^2(\mu)}.
\end{equation}

In the previous chapter our matrix looked a little bit different from that in \cite{NTV}. However, \eqref{pares} is exactly of the same form as in \cite[Lemma 7.3]{NTV}.
We note that
\begin{displaymath}
\langle \varphi_Q, \Pi\psi_R\rangle = \sum_{R' \in \mathcal{D}'} \mathop{\mathop{\sum_{Q' \in \mathcal{D}:\,Q' \subset R'}}_{\ell(Q') = \delta^r\ell(R')}}_{d(Q', X \setminus R') \ge CC_0\kappa \ell(Q')} \langle \varphi_Q,
(E_{R'} b_2)^{-1} \cdot E_{R'} \psi_R \cdot (\Delta_{Q'}^{b_1})^*(T^*b_2) \rangle
\end{displaymath}
and that
\begin{displaymath}
\langle \varphi_Q, (E_{R'} b_2)^{-1} \cdot E_{R'} \psi_R \cdot (\Delta_{Q'}^{b_1})^*(T^*b_2) \rangle = \frac{\langle \psi_R \rangle_{R'}}{\langle b_2 \rangle_{R'}} \langle \Delta^{b_1}_{Q'} \varphi_Q, T^*b_2\rangle.
\end{displaymath}
Since $\varphi_Q$ is of this particular form and $\int \varphi_Q\,d\mu = 0$ one sees that $\Delta^{b_1}_{Q'} \varphi_Q = \varphi_Q$ if $Q'$ is the same cube as $Q$ and  $\Delta^{b_1}_{Q'} \varphi_Q = 0$ otherwise.
This implies in particular that in the non-trivial situation one must also have $R' \subset R_1$ and then $\frac{\langle \psi_R \rangle_{R'}}{\langle b_2 \rangle_{R'}} = B_{R_1}$. If a suitable $R' \supset Q$ exists (and there can only be one)
we thus have $\langle \varphi_Q, \Pi\psi_R\rangle = B_{R_1}\langle T\varphi_Q, b_2\rangle$. Let us now demonstrate that, indeed, such an $R'$ exists provided that we have chosen $r$ to be large enough.
As $Q \in \mathcal{D}_{\textrm{good}}$, $Q \subset R$ and $\ell(Q) < \delta^r\ell(R)$, we must have some $R' \in \mathcal{D}'$ so that $Q \subset R' \subset R_1 \subset R$,
$\ell(Q) = \delta^r\ell(R')$ and $d(Q, X \setminus R') \ge \ell(Q)^{\gamma}\ell(R')^{1-\gamma}$. In particular, demanding that $r$ is at least so large that $\delta^{-(1-\gamma)r} \ge CC_0\kappa$, we have
\begin{displaymath}
d(Q, X \setminus R') \ge \ell(Q)^{\gamma}\ell(R')^{1-\gamma} = \ell(Q)\delta^{-r(1-\gamma)} \ge CC_0\kappa\ell(Q).
\end{displaymath}
We have shown that $\langle \varphi_Q, \Pi\psi_R\rangle = B_{R_1}\langle T\varphi_Q, b_2\rangle$, and so
\begin{align*}
\langle (T-\Pi^*)\varphi_Q, \psi_R\rangle &= \langle T\varphi_Q, \psi_R - B_{R_1}b_2 \rangle \\ &= B_{R_1}\langle T\varphi_Q, (\chi_{R_1}-1)b_2\rangle  + \sum_{i \ne 1} B_{R_i}\langle T\varphi_Q, \chi_{R_i}b_2\rangle.
\end{align*}

We handle the first term first. Let us calculate (choosing some arbitrary point $z \in Q$)
{\setlength\arraycolsep{2pt}
\begin{eqnarray*}
|\langle T\varphi_Q, (\chi_{R_1}-1)b_2\rangle| & = & \Big| \int_{X \setminus R_1} \int_Q [K(x,y) - K(x,z)]\varphi_Q(y)b_2(x)\,d\mu(y)\,d\mu(x)\Big| \\
& \lesssim & \ell(Q)^{\alpha}\|\varphi_Q\|_{L^1(\mu)} \int_{X \setminus R_1} \frac{d(x,z)^{-\alpha}}{\lambda(z, d(x,z))}\,d\mu(x) \\
& \le & \ell(Q)^{\alpha}\|\varphi_Q\|_{L^1(\mu)} \int_{X \setminus B(z, d(Q, X \setminus R_1))} \frac{d(x,z)^{-\alpha}}{\lambda(z, d(x,z))}\,d\mu(x) \\
& \lesssim & \ell(Q)^{\alpha}\|\varphi_Q\|_{L^1(\mu)} d(Q, X \setminus R_1)^{-\alpha}.
\end{eqnarray*}
}
We used (for the kernel estimates) that $d(y,z) \le C_0\ell(Q)$ and $d(x,z) \ge d(Q, X \setminus R_1) \ge \ell(Q)^{\gamma}\ell(R_1)^{1-\gamma} \ge CC_0\ell(Q)$ as there is a gap of at least $r$ in the generations of $Q$ and $R_1$.
We then use that $d(Q, X \setminus R_1) \ge \ell(Q)^{\gamma}\ell(R_1)^{1-\gamma} \ge \ell(Q)^{1/2}\ell(R_1)^{1/2} \gtrsim \ell(Q)^{1/2}\ell(R)^{1/2}$ to get that
\begin{displaymath}
|\langle T\varphi_Q, (\chi_{R_1}-1)b_2\rangle| \lesssim \ell(Q)^{\alpha}\mu(Q)^{1/2}\|\varphi_Q\|_{L^2(\mu)}\ell(Q)^{-\alpha/2}\ell(R)^{-\alpha/2}.
\end{displaymath}
Note then that $|B_{R_1}| \lesssim \|\psi_R\|_{L^2(\mu)}\mu(R_1)^{-1/2}$ to infer
\begin{displaymath}
|B_{R_1}| |\langle T\varphi_Q, (\chi_{R_1}-1)b_2\rangle| \lesssim \Big( \frac{\ell(Q)}{\ell(R)}\Big)^{\alpha/2}\Big(\frac{\mu(Q)}{\mu(R_1)}\Big)^{1/2}\|\varphi_Q\|_{L^2(\mu)}\|\psi_R\|_{L^2(\mu)}.
\end{displaymath}
We then deal with the other $\#i \lesssim 1$ terms. This time we have, using estimates from chapter 6 (see the proof of Lemma \ref{ch6l}), for some fixed $z \in Q$ that
\begin{eqnarray*}
|B_{R_i}||\langle T\varphi_Q, \chi_{R_i}b_2\rangle| & = & |B_{R_i}|\Big| \int_{R_i} \int_Q [K(x,y) - K(x,z)]\varphi_Q(y)b_2(x)\,d\mu(y)\,d\mu(x)\Big| \\
& \lesssim & \|\varphi_Q\|_{L^2(\mu)}\|\psi_R\|_{L^2(\mu)}\mu(R_i)^{1/2}\mu(Q)^{1/2}\frac{\ell(Q)^{\alpha}}{d(Q,R_i)^{\alpha}\lambda(z,d(Q,R_i))} \\
& \lesssim & \|\varphi_Q\|_{L^2(\mu)}\|\psi_R\|_{L^2(\mu)}\mu(R_i)^{1/2}\mu(Q)^{1/2}\frac{\ell(Q)^{\alpha/2} \ell(R)^{-\alpha/2}}{\lambda(z, \ell(R_i))} \\
& = &  \Big( \frac{\ell(Q)}{\ell(R)}\Big)^{\alpha/2} \mu(Q)^{1/2} \frac{\mu(R_i)^{1/2}}{\lambda(z, \ell(R_i))}\|\varphi_Q\|_{L^2(\mu)}\|\psi_R\|_{L^2(\mu)}.
\end{eqnarray*}

We make the following deduction which uses the doubling property of $\lambda$:
\begin{displaymath}
\frac{\mu(R_i)^{1/2}}{\lambda(z, \ell(R_i))} \le \frac{\mu(R)^{1/2}}{\lambda(z, \ell(R_i))} \le  \frac{\lambda(z, C_0\ell(R))^{1/2}}{\lambda(z, \ell(R_i))} \lesssim \frac{1}{\lambda(z, C_0\ell(R))^{1/2}} \le \frac{1}{\mu(R_1)^{1/2}}.
\end{displaymath}
Insert this to the estimate from above to get that
\begin{displaymath}
|B_{R_i}||\langle T\varphi_Q, \chi_{R_i}b_2\rangle| \lesssim \Big( \frac{\ell(Q)}{\ell(R)}\Big)^{\alpha/2}\Big(\frac{\mu(Q)}{\mu(R_1)}\Big)^{1/2}\|\varphi_Q\|_{L^2(\mu)}\|\psi_R\|_{L^2(\mu)},
\end{displaymath}
and (\ref{pares}) follows. Set
\begin{displaymath}
T_{QR} = \Big( \frac{\ell(Q)}{\ell(R)}\Big)^{\alpha/2}\Big(\frac{\mu(Q)}{\mu(R_1)}\Big)^{1/2},
\end{displaymath}
if $Q \in \mathcal{D}_{\textrm{good}}$, $Q \subset R$ and $\ell(Q) < \delta^r\ell(R)$, $T_{QR} = 0$ otherwise.
\begin{prop}\label{ch7p}
The matrix $T_{QR}$ generates a bounded operator on $\ell^2$.
\end{prop}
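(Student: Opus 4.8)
\emph{Proof proposal.} The plan is to bound the bilinear form $\sum_{Q,R}T_{QR}x_Qy_R$ for nonnegative sequences $(x_Q)_{Q\in\mathcal D}$ and $(y_R)_{R\in\mathcal D'}$ by a chain of Cauchy--Schwarz inequalities organized according to the relative generation gap between $Q$ and $R$, in the spirit of \cite[Lemma 7.4]{NTV}. For a pair with $T_{QR}\ne 0$ write $\textrm{gen}(Q)=\textrm{gen}(R)+r+j$; since $\ell(Q)<\delta^r\ell(R)$ this forces $j\ge 1$, and then $\ell(Q)/\ell(R)=\delta^{r+j}$, so $(\ell(Q)/\ell(R))^{\alpha/2}=\delta^{r\alpha/2}\delta^{\alpha j/2}\le\delta^{\alpha j/2}$. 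Hence $T_{QR}\lesssim\delta^{\alpha j/2}\,(\mu(Q)/\mu(R_1))^{1/2}$, where $R_1=R_1(Q)$ denotes the child of $R$ containing $Q$.

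The argument rests on two elementary packing facts. First, fix $R\in\mathcal D'$ and $j\ge 1$: the cubes $Q\in\mathcal D$ with $Q\subset R$ and $\ell(Q)=\delta^{r+j}\ell(R)$ are pairwise disjoint and partition themselves according to which child $R_i$ of $R$ they lie in; for those lying in a fixed child $R_i$ one has $R_1(Q)=R_i$ and $\sum_{Q\subset R_i}\mu(Q)\le\mu(R_i)$, and the number of children satisfies $\#i\lesssim 1$ by geometric doubling. Second, for each fixed $j$, every $Q\in\mathcal D$ has at most one ``ancestor'' $R\in\mathcal D'$ with $Q\subset R$ and $\textrm{gen}(R)=\textrm{gen}(Q)-r-j$, because two $\mathcal D'$-cubes of the same generation both containing the nonempty set $Q$ must coincide.

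With these in hand the rest is routine. Using Cauchy--Schwarz in $Q$ together with the packing bound $\sum_{Q\subset R_i}\mu(Q)/\mu(R_i)\le 1$ and then $\#i\lesssim 1$, one gets for fixed $R,j$ that $\sum_{Q\subset R,\ \ell(Q)=\delta^{r+j}\ell(R)}(\mu(Q)/\mu(R_1))^{1/2}x_Q\lesssim X_{R,j}$, where $X_{R,j}:=(\sum_{Q\subset R,\ \ell(Q)=\delta^{r+j}\ell(R)}x_Q^2)^{1/2}$. Summing in $j$ against the summable weights $\delta^{\alpha j/2}$ and applying Cauchy--Schwarz gives $\sum_{j\ge 1}\delta^{\alpha j/2}X_{R,j}\lesssim(\sum_{j\ge1}\delta^{\alpha j/2}X_{R,j}^2)^{1/2}$, and a further Cauchy--Schwarz in $R$ yields
\[
\sum_{Q,R}T_{QR}x_Qy_R\ \lesssim\ \Big(\sum_R y_R^2\Big)^{1/2}\Big(\sum_{j\ge1}\delta^{\alpha j/2}\sum_R X_{R,j}^2\Big)^{1/2}.
\]
Finally, for each fixed $j$ the second packing fact shows that each $Q$ is counted for at most one $R$, so $\sum_R X_{R,j}^2\le\sum_Q x_Q^2$; summing the geometric series in $j$ then gives $\sum_{Q,R}T_{QR}x_Qy_R\lesssim(\sum_Q x_Q^2)^{1/2}(\sum_R y_R^2)^{1/2}$, i.e.\ $\|(T_{QR})\|_{\ell^2\to\ell^2}\lesssim 1$. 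I do not expect a genuine obstacle here; the only points needing care are the bookkeeping for the two distinct grids $\mathcal D,\mathcal D'$ in the ``ancestor'' step and keeping the directions of all the inequalities straight.
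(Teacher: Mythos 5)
Your argument is correct and is essentially the proof of \cite[Lemma 7.4]{NTV} to which the paper's own proof simply defers: a Cauchy--Schwarz/packing argument organized by the generation gap $j$, using $\sum_{Q\subset R_i}\mu(Q)\le\mu(R_i)$, the boundedly many children of $R$, the uniqueness of the $\mathcal{D}'$-ancestor at each fixed gap, and the geometric decay $\delta^{\alpha j/2}$. One small point to make explicit: since $\mathcal{D}$ and $\mathcal{D}'$ are different grids, a cube $Q\subset R$ need not lie in a single child of $R$ in general; but $T_{QR}\neq 0$ forces $Q\in\mathcal{D}_{\textrm{good}}$, for which containment in one child $R_1$ holds (and is built into the definition of $T_{QR}$), so your first packing fact applies exactly to the terms that contribute.
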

\begin{proof}
This follows precisely as in \cite[Lemma 7.4]{NTV}, as one just has to deal with the measure $\mu$ not using any special assumptions about it.
\end{proof}

\section{Random almost-covering by balls}

We construct a probabilistic covering of a large portion of the space with balls, starting from some fixed size and going down in size but only for some controlled amount. This will be used as a substitute for a certain auxiliary third dyadic grid used in \cite[Sec. 10.2]{NTV} in connection with the weak boundedness property.
Here we need to explicitly work with our original quasimetric $\rho$, the reason being that the weak boundedness property does not transfer to the $d$-balls in any obvious way.

Let $0 < \vartheta < A_0^{-4}/32$. 
For each $k \in \Z$ fix some maximal collection $z^k_{\alpha} \in X$ for which $\rho(z^k_{\alpha}, z^k_{\beta}) \ge \vartheta^k$ for all $\alpha \ne \beta$.
This time we use the following transitive relation $\le$. For each $(k, \alpha)$ there exists at least one $\beta$ for which $\rho(z^k_{\alpha},z^{k-1}_{\beta}) < \vartheta^{k-1}$. Also, there exists
at most one $\beta$ for which $\rho(z^k_{\alpha}, z^{k-1}_{\beta}) < (2A_0)^{-1}\vartheta^{k-1}$.
The ordering $\le$ is constructed using the rules we now describe. Consider any pair $(k, \alpha)$. Check first whether there exists $\beta$ so that
$\rho(z^k_{\alpha}, z^{k-1}_{\beta}) < (2A_0)^{-1}\vartheta^{k-1}$. If so, set $(k, \alpha) \le (k-1,\beta)$ and $(k, \alpha) \not \le (k-1, \gamma)$ for $\gamma \ne \beta$. Otherwise, choose any $\beta$ for which
$\rho(z^k_{\alpha},z^{k-1}_{\beta}) < \vartheta^{k-1}$, and set $(k, \alpha) \le (k-1,\beta)$ and $(k, \alpha) \not \le (k-1, \gamma)$ for $\gamma \ne \beta$. Extend by transitivity.

We introduce another relation $\searrow$ now. Given any $(k, \alpha)$, pick one $\beta$ for which $(k+1,\beta) \le (k, \alpha)$  and set $(k, \alpha) \searrow (k+1, \beta)$ and
$(k, \alpha) \not \searrow (k+1, \gamma)$ for $\gamma \ne \beta$. This is the relation which we shall randomize in a natural way, and in this way we shall obtain in a random way a new collection of points for each level $k$.
Indeed, we shall essentially replace $z^k_{\alpha}$ with $z^{k+1}_{\beta}$ if $(k, \alpha) \searrow (k+1, \beta)$, and then remove some points if they  end up being too close to each other. Let us now do this in detail.
We define a probability $\mathbb{P}$, on the family of all relations $\searrow$ of the kind described, by setting
\begin{displaymath}
  \mathbb{P}((k, \alpha) \searrow (k+1, \beta)) = \frac{1}{\#\{\gamma: (k+1, \gamma) \le (k,\alpha)\}}
\end{displaymath}
for all $(k+1, \beta) \le (k, \alpha)$, and requiring that such events for two different $(k, \alpha)$ and $(\ell , \sigma)$ are independent. If $(k, \alpha) \searrow (k+1, \beta)$, we set
$y^k_{\alpha} = z^{k+1}_{\beta}$.  However, we want to now obtain better separation between the points $y^k_{\alpha}$. To this end, we say that $y^k_{\alpha}$ and $y^k_{\beta}$ are in conflict
if $\rho(y^k_{\alpha}, y^k_{\beta}) < (2A_0)^{-2}\vartheta^k$. If such is the case, we have $\rho(z^k_{\alpha}, z^k_{\beta}) < 3A_0^2\vartheta^k$. As $X$ is geometrically doubling, it follows that at most finitely many
pairs can conflict with a given pair $(k, \alpha)$. We enumerate the points $y^k_{\alpha}$ as $y^k_1, y^k_2, \ldots$. We choose $y^k_1$ and remove all the boundedly many points conflicting with it.
Next, choose the point with the smallest index in the remaining sequence, and remove all the boundedly many points conflicting with it. Continue this by induction. The final collection is now denoted by $x^k_{\alpha}$. By construction we have that
\begin{equation*}
   \rho(x^k_{\alpha}, x^k_{\beta}) \ge (2A_0)^{-2}\vartheta^k,\quad\text{if}\quad \alpha \ne \beta. 
\end{equation*}
Observe also that for an arbitrary $x \in X$ there exists $x^k_{\alpha}$ so that $\rho(x, x^k_{\alpha}) < 3A_0^2\vartheta^k$.

\begin{lem}\label{centerl}
If $(k+1, \beta) \le (k,\alpha)$, then $\mathbb{P}(z^{k+1}_{\beta} = x^k_{\alpha}) \ge \pi_0 > 0$.
\end{lem}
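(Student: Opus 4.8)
The plan is to produce an event of probability at least some universal $\pi_0>0$ on which $z^{k+1}_\beta$ is first chosen as $y^k_\alpha$ and then is not discarded in the conflict-thinning step. Two finite counting constants $M$ and $M'$ (depending only on $N$, $A_0$ and $\vartheta$), coming from the geometric doubling lemma, will be used throughout; I would fix them at the outset. The first step is to bound the probability of the selection $(k,\alpha)\searrow(k+1,\beta)$ from below: since $(k+1,\gamma)\le(k,\alpha)$ forces $\rho(z^{k+1}_\gamma,z^k_\alpha)<\vartheta^k$, all children of $(k,\alpha)$ lie in $B(z^k_\alpha,\vartheta^k)$ and are $\vartheta^{k+1}$-separated, so there are at most $M$ of them and
\[
  \mathbb{P}\big((k,\alpha)\searrow(k+1,\beta)\big)=\frac{1}{\#\{\gamma:(k+1,\gamma)\le(k,\alpha)\}}\ge\frac{1}{M};
\]
on this event $y^k_\alpha=z^{k+1}_\beta$.

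Next I would pin down which other parents could cause $y^k_\alpha$ to be removed. Call $(k,\alpha')$ with $\alpha'\ne\alpha$ \emph{dangerous} if it has a child $\gamma'$ with $\rho(z^{k+1}_{\gamma'},z^{k+1}_\beta)<(2A_0)^{-2}\vartheta^k$, i.e.\ if there is a positive chance that $y^k_{\alpha'}$ is in conflict with $z^{k+1}_\beta$; a non-dangerous $\alpha'\ne\alpha$ can never produce such a conflict, whatever it picks. A dangerous $\alpha'$ satisfies $\rho(z^k_{\alpha'},z^{k+1}_\beta)\le A_0(\vartheta^k+(2A_0)^{-2}\vartheta^k)<2A_0\vartheta^k$, so, the $z^k_{\alpha'}$ being $\vartheta^k$-separated, there are at most $M'$ dangerous indices. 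The crucial point is that, as ``close'' level-$k$ neighbours are unique and the parent rule always prefers them, $\rho(z^k_{\alpha'},z^{k+1}_\beta)<(2A_0)^{-1}\vartheta^k$ would make $\alpha'$ the parent of $\beta$, contradicting $(k+1,\beta)\le(k,\alpha)$ with $\alpha\ne\alpha'$; hence every dangerous $\alpha'$ obeys $\rho(z^k_{\alpha'},z^{k+1}_\beta)\ge(2A_0)^{-1}\vartheta^k$.

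Then I would show that every dangerous $\alpha'$ owns a child safely away from $z^{k+1}_\beta$. By maximality of the level-$(k+1)$ net there is $z^{k+1}_{\gamma_0}$ with $\rho(z^k_{\alpha'},z^{k+1}_{\gamma_0})<\vartheta^{k+1}<(2A_0)^{-1}\vartheta^k$, and by uniqueness of close neighbours this $\gamma_0$ must be a child of $\alpha'$; moreover the quasi-triangle inequality and the previous step give
\[
  \rho(z^{k+1}_{\gamma_0},z^{k+1}_\beta)\ge A_0^{-1}\rho(z^k_{\alpha'},z^{k+1}_\beta)-\rho(z^k_{\alpha'},z^{k+1}_{\gamma_0})>\frac{\vartheta^k}{2A_0^2}-\vartheta^{k+1}\ge\frac{\vartheta^k}{4A_0^2},
\]
the last inequality because $\vartheta<A_0^{-4}/32\le(4A_0^2)^{-1}$; thus $\gamma_0$ does not conflict with $z^{k+1}_\beta$. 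Since $(k,\alpha')$ has at most $M$ children, the probability that it picks a conflicting one is therefore at most $1-1/M$.

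Finally, because the choices made by $(k,\alpha)$ and by the at most $M'$ dangerous pairs are mutually independent, the event that $(k,\alpha)\searrow(k+1,\beta)$ and that no dangerous $(k,\alpha')$ picks a conflicting child has probability at least $M^{-1}(1-M^{-1})^{M'}=:\pi_0>0$. On this event $y^k_\alpha=z^{k+1}_\beta$ is in conflict with no $y^k_{\alpha'}$, $\alpha'\ne\alpha$ (the dangerous ones picked non-conflicting children by construction of the event, the non-dangerous ones cannot conflict at all), so it is never removed during the greedy thinning and hence survives into the final collection, giving $z^{k+1}_\beta=x^k_\alpha$ and $\mathbb{P}(z^{k+1}_\beta=x^k_\alpha)\ge\pi_0$. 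The step I expect to be the main obstacle is the one producing the safe child: one must exclude the scenario in which a dangerous neighbour $(k,\alpha')$ is \emph{forced} to pick a child conflicting with $z^{k+1}_\beta$, since that would make the conditional conflict probability equal to $1$ and collapse $\pi_0$ to $0$; this is exactly where the quantitative smallness of $\vartheta$ relative to $A_0$ is really needed, the rest being bookkeeping with geometric doubling and independence.
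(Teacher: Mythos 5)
Your argument is correct and is essentially the paper's proof: a uniform lower bound on $\mathbb{P}((k,\alpha)\searrow(k+1,\beta))$ from geometric doubling, the observation that only boundedly many neighbouring pairs can possibly conflict, and the key step that each such neighbour $(k,\alpha')$ has a child within $\vartheta^{k+1}$ of $z^k_{\alpha'}$ which, by the reverse quasi-triangle inequality and $\rho(z^k_{\alpha'},z^{k+1}_\beta)\ge(2A_0)^{-1}\vartheta^k$ (forced by the unique-close-parent rule) together with the smallness of $\vartheta$, cannot conflict with $z^{k+1}_\beta$, so that independence yields a uniform positive probability of no conflict. Your version merely makes the constants $M$, $M'$ and $\pi_0=M^{-1}(1-M^{-1})^{M'}$ explicit, which the paper leaves implicit.
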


\begin{proof}
The event $z^{k+1}_{\beta} = x^k_{\alpha}$ requires that $(k, \alpha) \searrow (k+1, \beta)$ and then that the point
$y^k_{\alpha} = z^{k+1}_{\beta}$ was not removed in the above described removal process. As $X$ is geometrically doubling, we have that $\#\{\gamma: (k+1, \gamma) \le (k,\sigma)\} \lesssim 1$ and so always
$\mathbb{P}( (k, \sigma) \searrow (k+1, \gamma)) \ge \pi_0' > 0$ if $(k+1, \gamma) \le (k, \sigma)$. So all we need to prove is that for any of the $\lesssim 1$ pairs $(k, \sigma)$ with the potential of conflicting with $(k, \alpha)$, we have some $\gamma$ so that $(k+1, \gamma) \le (k, \sigma)$ and $\rho(z^{k+1}_{\beta}, z^{k+1}_{\gamma}) \ge (2A_0)^{-2}\vartheta^k$, for then there is a positive probability that $(k,\sigma)\searrow(k+1,\gamma)$ and no conflict with $(k,\alpha)$ will arise.

Consider any such $(k, \sigma)$. There is $z^{k+1}_{\gamma}$ so that $\rho(z^k_{\sigma}, z^{k+1}_{\gamma}) < \vartheta^{k+1}$. In particular, $\rho(z^k_{\sigma}, z^{k+1}_{\gamma}) < (2A_0)^{-1}\vartheta^k$ and so certainly
$(k+1, \gamma) \le (k, \sigma)$.
Note also that $(k+1, \beta) \le (k, \alpha) \ne (k, \sigma)$, and so $\rho(z^{k+1}_{\beta}, z^k_{\sigma}) \ge (2A_0)^{-1}\vartheta^k$. This yields that
\begin{displaymath}
\rho(z^{k+1}_{\beta}, z^{k+1}_{\gamma}) \ge A_0^{-1}\rho(z^{k+1}_{\beta}, z^k_{\sigma}) - \rho(z^{k+1}_{\gamma}, z^k_{\sigma}) \ge [2^{-1}A_0^{-2} - \vartheta]\vartheta^k > (2A_0)^{-2}\vartheta^k.
\end{displaymath}
This proves the assertion.
\end{proof}

Take a new random variable $\tau$, uniformly distributed on $[1,2]$ and independent of all the previous random quantities. Then define the random $\rho$-balls
\begin{displaymath}
B^k_{\alpha} = B(x^k_{\alpha}, \tau A_0^{-4}\vartheta^k/32).
\end{displaymath}
We now note that $\mathbb{P}(x \in \bigcup_{\alpha} B^k_{\alpha}) \ge \pi_0 > 0$ for all $x \in X$ and $k \in \Z$. Indeed, for a given $x \in X$ there exists
$z^{k+1}_{\beta}$ so that $\rho(x, z^{k+1}_{\beta}) < \vartheta^{k+1} < A_0^{-4}\vartheta^k/32$, and $\mathbb{P}(z^{k+1}_{\beta} = x^k_{\alpha}) \ge \pi_0$ as proved above.
We also have
\begin{displaymath}
\rho(B^k_{\alpha}, B^k_{\gamma}) \ge A_0^{-2}(2A_0)^{-2}\vartheta^k - 2^{-3}A_0^{-4}\vartheta^k = 2^{-3}A_0^{-4}\vartheta^k.
\end{displaymath}
So we have separation for balls of the same generation.

We now make the final construction of the balls. We are given some small $\upsilon \in (0,1)$ and a fixed starting size $k$.
We construct the level $k$ balls $B^k_{\alpha}$ as above. We take some small parameter $\omega \in (0,1)$ which we shall fix momentarily.
We introduced the random variable $\tau$ to make the proof of the following fact easy: it is unlikely for a point to belong to the set
$\bigcup_{\alpha} (1+\omega)B^k_{\alpha} \setminus B^k_{\alpha}$.

Let us spell this out. One notes that $x \in (1+\omega)B^k_{\alpha}$ can only happen for certain boundedly many different $\alpha$, where the bound depends on the geometric doubling property. We then estimate the probability that $x\in(1+\omega)B^k_{\alpha} \setminus B^k_{\alpha}$ for one of these balls $B^k_{\alpha}$. The mentioned inclusion happens if and only if
\begin{equation*}
  \frac{\tau\vartheta^k}{32A_0^4}\leq\rho(x,x^k_{\alpha}) < (1+\omega)\frac{\tau\vartheta^k}{32A_0^4}.
\end{equation*}
This means that $\tau$ must belong to a certain interval of length
\begin{equation*}
  32A_0^4\rho(x,x^k_{\alpha})\vartheta^{-k}\frac{\omega}{1+\omega}\leq 4\omega,
\end{equation*}
since necessarily $\rho(x,x^k_{\alpha})\leq(8A_0^4)^{-1}\vartheta^k$. Given the uniform distribution of $\tau$ on $[1,2]$, 
this implies that 
\begin{equation*}
  \mathbb{P}(x \in \bigcup_{\alpha} (1+\omega)B^k_{\alpha} \setminus B^k_{\alpha}) \le \eta_0 = \eta_0(\omega)\lesssim\omega.
\end{equation*}

We now choose $\omega$ so small, and then $M\in\N$ so large, that
\begin{displaymath}
  \frac{\pi_0}{\pi_0 + \eta_0} \ge 1 - \upsilon/2,\qquad
  1 - (1-\pi_0 - \eta_0)^M > \frac{1-\upsilon}{1-\upsilon/2}.
\end{displaymath}
Let further $\epsilon > 0$ be so small, and then $s\in\N$ so large, that
\begin{equation*}
  (1+\epsilon)^2 < 1 + \omega/3,\qquad 2A(\epsilon)\vartheta^s < \omega/3.
\end{equation*}
We now continue to make the above random ball covering with $k$ replaced by $k + s$ and $X$ replaced by $X \setminus \bigcup_{\alpha} (1+\omega)B^k_{\alpha}$.
We repeat this procedure $M$ times. We denote the collection of balls we obtain by $\mathcal{B}$.

We are in the following situation. At stage one a point belongs to some ball with probability $\pi_1 \ge \pi_0$ and to the $\omega$-buffer of some ball with probability $\eta_1 \le \eta_0$. Thus, a point belongs to none of these sets
with probability $1-\pi_1 - \eta_1$. Note that subsets of $X$ are geometrically doubling with the same constant $N$, and thus a point belongs to some ball at stage two with probability $\pi_2 \ge \pi_0$
and to the $\omega$-buffer of some ball with probability $\eta_2 \le \eta_0$. We have this situation at every stage. Therefore, it holds that
\begin{align*}
\mathbb{P}\big(x \in \bigcup_{B \in \mathcal{B}} B\big) &= \pi_1 + (1-\pi_1-\eta_1)\pi_2 + \cdots + \Big( \prod_{i=1}^{M-1} (1-\pi_i-\eta_i)\Big)\pi_M \\
 &\ge \pi_0 \sum_{i=0}^{M-1} (1-\pi_0-\eta_0)^i = \frac{\pi_0}{\pi_0 + \eta_0}[1 - (1-\pi_0 - \eta_0)^M] > 1 - \upsilon.
\end{align*}

We got $\rho$-balls of generation $k$, $k+s$, \ldots, $k + (M-1)s$ so that it is very likely for a point to belong to one of them. Also, balls of same generation $k + ms$ are $2^{-3}A_0^{-4}\vartheta^{k + ms}$-separated.
Now we need to utilize the regularity of the quasimetric $\rho$. Indeed, this is to guarantee that we can keep the buffer small but
still separate balls of different generations. Let us study two balls $B^{k+ms}$ and $B^{k+ns}$ of centers $x^{k+ms}$ and $x^{k+ns}$, where $m < n$. (We suppress the lower indices for this argument.) Choose $x \in B^{k+ms}$ and $y \in B^{k+ns}$.
Let $R$ and $r$ be the radii of $B^{k+ms}$ and $B^{k+ns}$, so that $r \le 2\vartheta^sR$. We have
\begin{align*}
(1+\omega)R &\le \rho(x^{k+ms}, x^{k+ns}) \\
&\le (1+\epsilon)\rho(x^{k+ms}, y) + A(\epsilon)\rho(y, x^{k+ns}) \\
&\le (1+\epsilon)^2\rho(x^{k+ms}, x) + (1+\epsilon)A(\epsilon)\rho(x, y) + A(\epsilon)\rho(y, x^{k+ns}) \\
&\le (1+\epsilon)^2R + (1+\epsilon)A(\epsilon)\rho(x, y) + 2A(\epsilon)\vartheta^sR \\
&\le R + (2\omega/3)R + (1+\epsilon)A(\epsilon)\rho(x,y)
\end{align*}
implying that $\rho(x,y) \ge c(\omega)R$.

Let us now formulate the above given construction of the random almost-covering by balls as a proposition.

\begin{prop}\label{rball}
Let $0 < \vartheta < A_0^{-4}/32$, $k \in \Z$ and $\upsilon \in (0,1)$ be given. Then there exist $\omega$, $s$ and $M$, independent of $k$, so that we may randomly construct a disjoint family  $\mathcal{B}$ of $\rho$-balls as follows:
if $B,B' \in \mathcal{B}$, then $r_B \sim \vartheta^{k+m_1s}$ and $r_{B'} \sim \vartheta^{k+m_2s}$ for some $m_1, m_2 = 0, 1, \ldots, M-1$ and
$\rho(B, B') \ge c(\omega)\vartheta^{k + \min(m_1,m_2)s}$, and also
\begin{displaymath}
\mathbb{P}\big(x \in \bigcup_{B \in \mathcal{B}} B\big) > 1- \upsilon
\end{displaymath}
for every $x \in X$.
\end{prop}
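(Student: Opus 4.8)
The plan is to observe that Proposition~\ref{rball} is simply a clean repackaging of the construction carried out in the preceding pages of this section, so the proof amounts to assembling the pieces and checking that every asserted quantitative bound comes with a constant independent of $k$.

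First I would pin down the auxiliary parameters in the order forced by the construction. Given $\vartheta$, $k$, and $\upsilon$, I would first choose $\omega \in (0,1)$ so small that $\pi_0/(\pi_0+\eta_0(\omega)) \ge 1-\upsilon/2$; this is possible because $\pi_0 > 0$ is the fixed constant furnished by Lemma~\ref{centerl} (depending only on $N$ and $A_0$), while $\eta_0(\omega) \lesssim \omega$ from the $\tau$-argument. Then I would choose $M \in \mathbb{N}$ so large that $1-(1-\pi_0-\eta_0)^M > (1-\upsilon)/(1-\upsilon/2)$, then $\epsilon > 0$ with $(1+\epsilon)^2 < 1+\omega/3$, and finally $s \in \mathbb{N}$ with $2A(\epsilon)\vartheta^s < \omega/3$. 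None of $\omega$, $M$, $\epsilon$, $s$ depends on $k$.

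Next I would run the randomized single-scale construction $M$ times: at stage $j = 0, 1, \ldots, M-1$ apply it with starting scale $k+js$ to the set $X_j = X \setminus \bigcup_{i<j} \bigcup_\alpha (1+\omega)B^{k+is}_\alpha$, which is again geometrically doubling with the same constant $N$, producing a disjoint family of balls $B^{k+js}_\alpha$ of radius $\tau_j A_0^{-4}\vartheta^{k+js}/32 \sim \vartheta^{k+js}$. Taking $\mathcal{B}$ to be the union of all the balls so produced, the radii and the labels $m_1, m_2 \in \{0, \ldots, M-1\}$ are then as claimed. The three conclusions are verified as follows. Disjointness together with same-generation separation $\rho(B,B') \ge 2^{-3}A_0^{-4}\vartheta^{k+ms}$ is exactly the estimate already obtained from $\rho(x^k_\alpha, x^k_\beta) \ge (2A_0)^{-2}\vartheta^k$ and $r \le \tau A_0^{-4}\vartheta^k/32 \le 2^{-4}A_0^{-4}\vartheta^k$. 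Separation of different generations, $\rho(B^{k+ms}, B^{k+ns}) \ge c(\omega)\vartheta^{k + \min(m,n)s}$ for $m<n$, follows from the displayed chain of inequalities above, which is the one place where the regularity of $\rho$ is used, through $\rho(x,y) \le (1+\epsilon)\rho(x,z) + A(\epsilon)\rho(z,y)$. Finally, for the covering probability, a point $x$ is covered at stage $j$ with probability $\pi_j \ge \pi_0$ and falls in an $\omega$-buffer with probability $\eta_j \le \eta_0$ (again using that each $X_j$ is geometrically doubling with the fixed $N$), so independence across stages and a geometric-series summation give $\mathbb{P}(x \in \bigcup_{B \in \mathcal{B}} B) \ge \pi_0 \sum_{i=0}^{M-1}(1-\pi_0-\eta_0)^i = \frac{\pi_0}{\pi_0+\eta_0}[1-(1-\pi_0-\eta_0)^M] > 1-\upsilon$ by the choices of $\omega$ and $M$.

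The only genuinely delicate point, and the one I expect to be the main obstacle, is the tension in the last two items: the buffer width $\omega$ must be kept small so that the buffer-miss probability $\eta_0$ remains dominated by the covering probability $\pi_0$, yet different generations must still be pushed a definite distance apart; this is precisely what dictates choosing $\omega$ before $\epsilon$ and $s$, and what forces one to invoke the regularity of the quasimetric at this single place. Everything else is a routine reprise of Lemma~\ref{centerl}, the elementary interval estimate for the uniformly distributed $\tau$, and the geometric doubling counting lemma.
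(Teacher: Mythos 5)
Your proposal is correct and follows essentially the same route as the paper, whose ``proof'' of Proposition~\ref{rball} is precisely the preceding construction: the same ordering of parameter choices ($\omega$, then $M$, then $\epsilon$, then $s$), the same use of Lemma~\ref{centerl} and the random dilation $\tau$ for the buffer estimate $\eta_0\lesssim\omega$, the same $M$-fold iteration on the complements of the $(1+\omega)$-enlarged balls with the geometric-series covering bound, and the same single invocation of the regularity of $\rho$ for the inter-generational separation. Nothing further is needed.
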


\section{Estimates for adjacent cubes of comparable size}

We are given adjacent ($d(Q, R) < CC_0\min(\ell(Q), \ell(R))$ say) $Q \in \mathcal{D}$ and $R \in \mathcal{D}'$ of comparable size, that is $\ell(Q) \sim \ell(R)$ (meaning $|\textrm{gen}(Q)-\textrm{gen}(R)| \le r$).
We are also given some fixed small $\epsilon > 0$.
We define $\Delta = Q \cap R$, $\delta_Q = \{x: d(x,Q) \le \epsilon \ell(Q) \textrm{ and } d(x, X \setminus Q) \le \epsilon \ell(Q)\}$
and $\delta_R = \{x: d(x, R) \le \epsilon \ell(R) \textrm{ and } d(x, X \setminus R) \le \epsilon \ell(R)\}$. Also, set
\begin{equation}\label{badr1}
Q_b = Q \cap \bigcup_{R' \in \mathcal{D}':\, \ell(R') \sim \ell(Q)} \delta_{R'}
\end{equation}
and
\begin{equation}\label{badr2}
R_b = R \cap \bigcup_{Q' \in \mathcal{D}:\, \ell(Q') \sim \ell(R)} \delta_{Q'}.
\end{equation}
We define $Q_s = Q \setminus \Delta \setminus \delta_R$, $Q_{\partial} = Q \setminus \Delta \setminus Q_s$,
$R_s = R \setminus \Delta \setminus \delta_Q$ and $R_{\partial} = R \setminus \Delta \setminus R_s$. Furthermore, set
$\tilde \Delta = \Delta \setminus \delta_Q \setminus \delta_R$. We now finally fix $\delta = A_0^{-4}/1000$, and then fix the smallest $k$ for which $\delta^k \le \Lambda^{-1}(8^{-1}\epsilon\min(\ell(Q), \ell(R)))^{\beta}$.
Consider some small enough $\upsilon \in (0,1)$, and set $\vartheta = \delta$. 
Recall Proposition \ref{rball},
that is, the random way to construct a collection of $\rho$-balls $\mathcal{B}$  starting from the fixed level $k$ with parameter $\upsilon$ (and the related parameters
$\omega$, $s$ and $M$ that all depend of $\upsilon$ but not on $k$).
As we have $\mathbb{P}(x \in \bigcup_{B \in \mathcal{B}} B) > 1 - \upsilon$ for all $x \in X$, we have $\mathbb{E}(\mu(\tilde \Delta \setminus \bigcup_{B \in \mathcal{B}} B)) < \upsilon\mu(\tilde \Delta)$.
So we may now fix some such ball covering $\mathcal{B}$ for which $\mu(\tilde \Delta \setminus \bigcup_{B \in \mathcal{B}} B) \le \upsilon\mu(\tilde \Delta)$ as we have positive probability to obtain one.
We now remove from the collection $\mathcal{B}$ those balls that do not touch $\tilde \Delta$.

First we want to estimate $\# \mathcal{B}$. Observe that $\textrm{diam}(\Delta) \le C_0\min(\ell(Q), \ell(R))$ and fix some $x_0 \in \Delta$.
If $B \in \mathcal{B}$, fix some $x \in B \cap \Delta$.
Denote the center of $B$ by $z_B$. Note that
\begin{align*}
d(z_B, x_0) &\le d(z_B, x) + d(x, x_0) \\
&\le 2\rho(z_B,x)^{1/\beta} + d(x,x_0) \\
&\le 2\delta^{k/\beta} + C_0\min(\ell(Q), \ell(R))  \\
&< [2 + C_0\Lambda^{1/\beta}\delta^{-1/\beta}8\epsilon^{-1}]\delta^{k/\beta} \\
&\le 16C_0\Lambda^{1/\beta}\delta^{-1/\beta}\epsilon^{-1}\delta^{k/\beta}.
\end{align*}
This means
that the $d$-ball $B(x_0, 16C_0\Lambda^{1/\beta}\delta^{-1/\beta}\epsilon^{-1}\delta^{k/\beta})$ contains the centers $z_B$ of the disjoint $d$-balls
$B(z_B, 4^{-1}(A_0^{-4}\delta^{(M-1)s}/32)^{1/\beta} \delta^{k/\beta})$. This implies that
\begin{displaymath}
\#\mathcal{B} \le N\Big(\frac{\epsilon}{64C_0}\Big(\frac{\delta^{(M-1)s+1}}{32\Lambda A_0^4}\Big)^{1/\beta}\Big)^{-n}.
\end{displaymath}
This is a dependence we can live with, as all the quantities in the upper bound will be eventually fixed.

Next, let us check that $\Lambda B \subset \Delta$ for every
$B \in \mathcal{B}$. There exists $x \in B$ so that $d(x, X \setminus \Delta) \ge \epsilon\min(\ell(Q), \ell(R))$ as $B \cap \tilde \Delta \ne \emptyset$. If $w \in \Lambda B$, we have
\begin{align*}
d(x, X \setminus \Delta) - d(w, X \setminus \Delta) &\le d(w,x) \\
&\le d(w,z_B) + d(z_B, x)  \\
&\le 2\rho(w,z_B)^{1/\beta} + 2\rho(z_B,x)^{1/\beta} \\
&\le 4\Lambda^{1/\beta}\delta^{k/\beta} \le \frac{\epsilon}{2}\min(\ell(Q), \ell(R)),
\end{align*}
and so $d(\Lambda B, X \setminus \Delta) \ge (1/2)\epsilon\min(\ell(Q), \ell(R)) > 0$.

In a forthcoming decomposition we shall have plenty of separated terms. For these the following lemma comes in handy, and we use it without further mention in what follows.

\begin{lem}
Let $S_1$ and $S_2$ be two sets so that we have $\operatorname{diam}(S_1) \sim \operatorname{diam}(S_2)$ and $d(S_1, S_2) \gtrsim \epsilon\min(\operatorname{diam}(S_1), \operatorname{diam}(S_2))$. Suppose we are also given functions
$\varphi$ and $\psi$ so that $\|\varphi\|_{L^{\infty}(\mu)} +  \|\psi\|_{L^{\infty}(\mu)} \lesssim 1$, spt$\, \varphi \subset S_1$ and spt$\,\psi \subset S_2$.
Then it holds that
\begin{displaymath}
  |\langle T\varphi, \psi \rangle| \lesssim \epsilon^{-d}\mu(S_1)^{1/2}\mu(S_2)^{1/2}.
\end{displaymath}
\end{lem}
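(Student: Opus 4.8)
The plan is to use the separation of the supports directly, via the kernel representation of $T$ together with the size estimate of the standard kernel, and to turn the resulting integral into a bound with the prescribed power of $\epsilon$ by a doubling estimate for $\lambda$. Write $\ell := \min(\operatorname{diam} S_1,\operatorname{diam} S_2)$, so that $\operatorname{diam} S_i \sim \ell$, and put $D := d(S_1,S_2)$, which by hypothesis satisfies $D \gtrsim \epsilon\ell > 0$. Since $\operatorname{spt}\varphi \subset S_1$ and $\operatorname{spt}\psi \subset S_2$ are positively separated, the Calder\'on--Zygmund representation $Tf(x)=\int K(x,y)f(y)\,d\mu(y)$ is valid for every $x$ in $\operatorname{spt}\psi$, so that
\begin{displaymath}
  \langle T\varphi,\psi\rangle = \int_{S_2}\int_{S_1} K(x,y)\varphi(y)\psi(x)\,d\mu(y)\,d\mu(x),
\end{displaymath}
and, using $\|\varphi\|_{L^\infty(\mu)}+\|\psi\|_{L^\infty(\mu)}\lesssim 1$ and $|K(x,y)|\lesssim\min\bigl(\lambda(x,d(x,y))^{-1},\lambda(y,d(x,y))^{-1}\bigr)$, one gets
\begin{displaymath}
  |\langle T\varphi,\psi\rangle|\lesssim \int_{S_2}\int_{S_1}\frac{d\mu(y)\,d\mu(x)}{\lambda(y,d(x,y))}.
\end{displaymath}

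Next I would estimate the inner integral for a fixed $x\in S_2$. For $y\in S_1$ we have $d(x,y)\ge D$, and since $\lambda(y,\cdot)$ is non-decreasing this gives $\lambda(y,d(x,y))\ge\lambda(y,D)$. The doubling property $\lambda(y,2r)\le C_\lambda\lambda(y,r)$ together with $D\gtrsim\epsilon\ell$ yields $\lambda(y,\ell)\lesssim(\ell/D)^{d}\lambda(y,D)\lesssim\epsilon^{-d}\lambda(y,D)$, i.e.\ $\lambda(y,d(x,y))\gtrsim\epsilon^{d}\lambda(y,\ell)$. Finally, in the metric space $(X,d)$ we have $S_1\subset\overline B(y,\operatorname{diam} S_1)$ for every $y\in S_1$, and since $\operatorname{diam} S_1\sim\ell$ the upper doubling bound gives $\mu(S_1)\le\lambda(y,C\ell)\lesssim\lambda(y,\ell)$. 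Chaining these, $\lambda(y,d(x,y))^{-1}\lesssim\epsilon^{-d}\mu(S_1)^{-1}$ uniformly in $y\in S_1$, whence $\int_{S_1}\lambda(y,d(x,y))^{-1}\,d\mu(y)\lesssim\epsilon^{-d}$ and therefore $|\langle T\varphi,\psi\rangle|\lesssim\epsilon^{-d}\mu(S_2)$. Running the symmetric argument (bounding the minimum by $\lambda(x,d(x,y))^{-1}$ and integrating in $x$ first) gives $|\langle T\varphi,\psi\rangle|\lesssim\epsilon^{-d}\mu(S_1)$, and multiplying the square roots of the two bounds produces the asserted estimate.

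There is no substantial obstacle: the only points needing care are checking the legitimacy of the kernel representation on all of $S_2$ (immediate from $d(S_1,S_2)>0$) and bookkeeping the exponent in the doubling step so that precisely $\epsilon^{-d}$, and no larger power, appears. Note also that no upper bound on $d(S_1,S_2)$ is required: if $D\ge\ell$ the factor $\epsilon^{-d}$ is not even used, since then $\lambda(y,d(x,y))\ge\lambda(y,\ell)\gtrsim\mu(S_1)$ directly.
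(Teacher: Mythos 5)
Your argument is correct and is essentially the paper's own proof: bound $|\langle T\varphi,\psi\rangle|$ by the double integral of the kernel size estimate, use monotonicity and doubling of $\lambda$ together with $d(S_1,S_2)\gtrsim\epsilon\min(\operatorname{diam}S_1,\operatorname{diam}S_2)$ to get $\lambda(y,d(x,y))\gtrsim\epsilon^{d}\mu(S_1)$ (and symmetrically in $x$), and then combine the two resulting bounds $\epsilon^{-d}\mu(S_2)$ and $\epsilon^{-d}\mu(S_1)$ into the geometric mean. The only cosmetic difference is that the paper takes the minimum of the two bounds before passing to the geometric mean, while you multiply square roots of the two separate estimates; these are equivalent, and your bookkeeping of the doubling exponent is the correct one.
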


\begin{proof}
Using the first kernel estimate we have
\begin{displaymath}
|\langle T\varphi, \psi \rangle| \lesssim \int_{S_2} \int_{S_1} \min\Big(\frac{1}{\lambda(x, d(x,y))}, \frac{1}{\lambda(y, d(x,y))}\Big)\,d\mu(y)\,d\mu(x).
\end{displaymath}
It holds that $\lambda(y,d(x,y)) \ge \lambda(y, d(S_1,S_2)) \gtrsim \lambda(y, \epsilon\operatorname{diam}(S_1))
\gtrsim \epsilon^{-d}\lambda(y, \operatorname{diam}(S_1)) \ge \epsilon^{-d}\mu(S_1)$. Similarly
$\lambda(x,d(x,y)) \gtrsim \epsilon^{-d}\mu(S_2)$. Thus, it follows that
\begin{displaymath}
|\langle T\varphi, \psi \rangle| \lesssim \epsilon^{-d}\min(\mu(S_1), \mu(S_2)) \le \epsilon^{-d}\mu(S_1)^{1/2}\mu(S_2)^{1/2}.
\end{displaymath}
\end{proof}

We write $\Delta \setminus \bigcup B$ as a disjoint union of $\Omega_i = \tilde \Delta \setminus \bigcup B$ and some
sets $\Omega_Q \subset Q_b$ and $\Omega_R \subset R_b$. We now decompose
\begin{eqnarray*}
\langle T(\chi_Q b_1), \chi_R b_2\rangle & = & \langle T(\chi_Q b_1), \chi_{R_{\partial}} b_2\rangle + \langle T(\chi_Q b_1), \chi_{R_s} b_2\rangle \\
& + & \langle T(\chi_{Q_{\partial}} b_1), \chi_{\Delta} b_2\rangle + \langle T(\chi_{Q_s} b_1), \chi_{\Delta} b_2\rangle \\
& + & \langle T(\chi_{\Delta} b_1), \chi_{\Delta \setminus \bigcup B} b_2\rangle + \langle T(\chi_{\Delta \setminus \bigcup B} b_1), \chi_{\bigcup B} b_2\rangle \\
& + & \langle T(\chi_{\bigcup B} b_1), \chi_{\bigcup B} b_2\rangle = A + B + C + D + E + F + G.
\end{eqnarray*}
Furhermore, we decompose
\begin{align*}
E &= \langle T(\chi_{\Delta} b_1), \chi_{\Delta \setminus \bigcup B} b_2\rangle \\
   &= \langle T(\chi_{\Delta} b_1), \chi_{\Omega_Q} b_2\rangle + \langle T(\chi_{\Delta} b_1), \chi_{\Omega_R} b_2\rangle
+ \langle T(\chi_{\Delta} b_1), \chi_{\Omega_i} b_2\rangle\\ &= E_1 + E_2 + E_3
\end{align*}
and
\begin{align*}
F &= \langle T(\chi_{\Delta \setminus \bigcup B} b_1), \chi_{\bigcup B} b_2\rangle \\
   &= \langle T(\chi_{\Omega_Q} b_1), \chi_{\bigcup B} b_2\rangle + \langle T(\chi_{\Omega_R} b_1), \chi_{\bigcup B} b_2\rangle
    +\langle T(\chi_{\Omega_i} b_1), \chi_{\bigcup B} b_2\rangle \\
    &= F_1 + F_2 + F_3.
\end{align*}
We still write
\begin{align*}
G &= \langle T(\chi_{\bigcup B} b_1), \chi_{\bigcup B} b_2\rangle \\
   &= \sum_B \langle T(\chi_B b_1), \chi_B b_2\rangle + \sum_{B \ne B'} \langle T(\chi_B b_1), \chi_{B'} b_2\rangle = G_1 + G_2.
\end{align*}

Let us deal with these terms now. We have for the terms
\begin{displaymath}
A = |\langle T(\chi_Q b_1), \chi_{R_{\partial}}b_2\rangle|, \,\,\, C = |\langle T(\chi_{Q_{\partial}} b_1), \chi_{\Delta} b_2\rangle|, \,\,\, E_1 = |\langle T(\chi_{\Delta} b_1), \chi_{\Omega_Q} b_2\rangle|
\end{displaymath}
and
\begin{displaymath}
E_2 = |\langle T(\chi_{\Delta} b_1), \chi_{\Omega_R} b_2\rangle|, \,\,\, F_1 = |\langle T(\chi_{\Omega_Q} b_1), \chi_{\bigcup B} b_2\rangle|, \,\,\, F_2 = |\langle T(\chi_{\Omega_R} b_1), \chi_{\bigcup B} b_2\rangle|
\end{displaymath}
that
\begin{displaymath}
C + E_1 + F_1 \lesssim \|T\| \|\chi_{Q_b}b_1\|_{L^2(\mu)}\mu(R)^{1/2}
\end{displaymath}
and
\begin{displaymath}
A + E_2 + F_2 \lesssim \|T\| \mu(Q)^{1/2}\|\chi_{R_b}b_2\|_{L^2(\mu)},
\end{displaymath}
where we have used the facts that $|b_1| \sim 1$ and $|b_2| \sim 1$. Next, we observe
that for the terms
\begin{displaymath}
E_3 = |\langle T(\chi_{\Delta} b_1), \chi_{\Omega_i} b_2\rangle| \qquad \textrm{and} \qquad F_3 = |\langle T(\chi_{\Omega_i} b_1), \chi_{\bigcup B} b_2\rangle|
\end{displaymath}
we have
\begin{displaymath}
E_3 + F_3 \lesssim \upsilon^{1/2} \|T\| \mu(Q)^{1/2}\mu(R)^{1/2}.
\end{displaymath}

For the separated terms
\begin{displaymath}
B = |\langle T(\chi_Q b_1), \chi_{R_s} b_2\rangle| \qquad \textrm{and} \qquad D =  |\langle T(\chi_{Q_s} b_1), \chi_{\Delta} b_2\rangle|
\end{displaymath}
we have the estimate
\begin{displaymath}
B + D \lesssim \epsilon^{-d} \mu(Q)^{1/2}\mu(R)^{1/2}.
\end{displaymath}
It remain to deal with the term $G$.
We invoke the weak boundedness property and the fact that $\Lambda B \subset \Delta$ for all the boundedly many $B \in \mathcal{B}$ to get that
\begin{displaymath}
G_1 = \Big|\sum_B \langle T(\chi_B b_1), \chi_B b_2\rangle\Big| \lesssim  C(\epsilon, \upsilon)\mu(Q)^{1/2}\mu(R)^{1/2}.
\end{displaymath}
Using the separation of different balls $B$ and $B'$ we obtain that
\begin{displaymath}
G_2 = \Big|\sum_{B \ne B'} \langle T(\chi_B b_1), \chi_{B'} b_2\rangle\Big| \lesssim C(\epsilon, \upsilon) \mu(Q)^{1/2}\mu(R)^{1/2}.
\end{displaymath}

We now recapitulate what we have done in form of a proposition.

\begin{prop}\label{ch9p}
Let $Q \in \mathcal{D}$ and $R \in \mathcal{D}'$ be two adjacent cubes of comparable size, that is, $d(Q, R) < CC_0\min(\ell(Q), \ell(R))$ and $|\textrm{gen}(Q)-\textrm{gen}(R)| \le r$.
Let $\epsilon > 0$ and $\upsilon \in (0,1)$. It holds that
\begin{align*}
|\langle T(\chi_Q b_1), \chi_R b_2\rangle| \lesssim \|T\| \|\chi_{Q_b}b_1&\|_{L^2(\mu)}\mu(R)^{1/2}+ \|T\| \mu(Q)^{1/2}\|\chi_{R_b}b_2\|_{L^2(\mu)} \\
&+ \upsilon^{1/2}\|T\| \mu(Q)^{1/2}\mu(R)^{1/2} + C(\epsilon, \upsilon)\mu(Q)^{1/2}\mu(R)^{1/2},
\end{align*}
where $Q_b$ and $R_b$ are as in (\ref{badr1}) and (\ref{badr2}) respectively.
\end{prop}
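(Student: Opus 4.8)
The proof is essentially a bookkeeping exercise that collects the term-by-term estimates carried out just above the statement, so the plan is to organize the pairing $\langle T(\chi_Q b_1),\chi_R b_2\rangle$ according to the geometry of $Q$ and $R$ and then dispatch each piece by one of four mechanisms: crude $L^2(\mu)$-boundedness of $T$ applied to a function supported in a thin boundary layer; the separated-sets lemma stated above; smallness of the measure of an uncovered remainder; or the weak boundedness property. Concretely, I would first perform the decomposition $\langle T(\chi_Q b_1),\chi_R b_2\rangle=A+B+C+D+E+F+G$ displayed above, isolating the overlap $\Delta=Q\cap R$, its interior core $\tilde\Delta=\Delta\setminus\delta_Q\setminus\delta_R$, the ``skin'' pieces $Q_\partial,R_\partial$, the ``far'' pieces $Q_s,R_s$, and finally the random ball cover $\bigcup_{B\in\mathcal{B}}B$ of $\tilde\Delta$ furnished by Proposition~\ref{rball}, together with the uncovered part $\Omega_i$ (and the spillover parts $\Omega_Q\subset Q_b$, $\Omega_R\subset R_b$).

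For the terms $C,E_1,F_1$ one uses $|b_1|\sim1$: each is $|\langle T\varphi,\psi\rangle|$ with $\psi$ bounded and supported in $R$ (or in $\bigcup_{B}B\subset\Delta\subset R$) and $\varphi$ bounded and supported in $Q_\partial\cup\Omega_Q\subset Q_b$, so $C+E_1+F_1\lesssim\|T\|\,\|\chi_{Q_b}b_1\|_{L^2(\mu)}\mu(R)^{1/2}$, and symmetrically $A+E_2+F_2\lesssim\|T\|\,\mu(Q)^{1/2}\|\chi_{R_b}b_2\|_{L^2(\mu)}$; this is exactly where the bad-set terms on the right-hand side are produced. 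The terms $B,D$ pair functions supported in $Q_s$ (resp.\ $R_s$) against functions supported in $R_s$ (resp.\ $\Delta$); these sets have comparable diameters $\sim\ell(Q)\sim\ell(R)$ and are $\gtrsim\epsilon\ell(Q)$-separated by the definitions of $\delta_Q,\delta_R$, so the separated-sets lemma yields $B+D\lesssim\epsilon^{-d}\mu(Q)^{1/2}\mu(R)^{1/2}$.

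The interior terms carry the probabilistic input. For $E_3,F_3$ the relevant set is $\Omega_i=\tilde\Delta\setminus\bigcup_{B\in\mathcal{B}}B$, and by fixing a realization of the covering with $\mu(\Omega_i)\le\upsilon\mu(\tilde\Delta)\le\upsilon\mu(Q)$ we get $E_3+F_3\lesssim\upsilon^{1/2}\|T\|\mu(Q)^{1/2}\mu(R)^{1/2}$ from crude boundedness. The diagonal term $G=\sum_{B}\langle T(\chi_B b_1),\chi_B b_2\rangle+\sum_{B\ne B'}\langle T(\chi_B b_1),\chi_{B'}b_2\rangle=G_1+G_2$ is handled by the weak boundedness property: since each $B\in\mathcal{B}$ satisfies $\Lambda B\subset\Delta$, one has $|\langle T(\chi_B b_1),\chi_B b_2\rangle|=|\langle M_{b_2}TM_{b_1}\chi_B,\chi_B\rangle|\lesssim\mu(\Lambda B)\le\mu(\Delta)$, and since $\#\mathcal{B}\lesssim C(\epsilon,\upsilon)$ we sum to $G_1\lesssim C(\epsilon,\upsilon)\mu(Q)^{1/2}\mu(R)^{1/2}$; for $G_2$ the balls of $\mathcal{B}$ are pairwise $\rho$-separated by comparable amounts and of comparable radius within each generation, so applying the separated-sets lemma to each of the $\lesssim C(\epsilon,\upsilon)^2$ pairs gives the same bound. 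Collecting the seven estimates and absorbing $\epsilon^{-d}$ and the cardinality factors into $C(\epsilon,\upsilon)$ proves the proposition.

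The main obstacle is not this final assembly but the construction it relies on, namely producing a cover of $\tilde\Delta$ by $\rho$-balls that is simultaneously a genuine almost-cover (failure probability $\upsilon$), composed of pairwise separated balls of comparable radius even across the $M$ generations, and dilation-stable in the sense that $\Lambda B\subset\Delta$ for every $B$ in the family; this is precisely Proposition~\ref{rball} together with the paragraph preceding it, and it is the reason one must work with the original quasimetric $\rho$ and invoke its regularity. Given that, the only remaining delicacy is to choose the decomposition so that every contribution that cannot be bounded absolutely falls into one of the four disposable categories --- in particular so that the leftover boundary mass is captured exactly by $\chi_{Q_b}b_1$ and $\chi_{R_b}b_2$, which is the role played by the definitions of $\delta_Q,\delta_R,Q_b,R_b$.
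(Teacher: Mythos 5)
Your proposal is correct and follows essentially the same route as the paper: the same decomposition $A+B+\dots+G$ (with $E,F,G$ further split), the bad-layer terms absorbed into $\|\chi_{Q_b}b_1\|_{L^2(\mu)}$ and $\|\chi_{R_b}b_2\|_{L^2(\mu)}$ via crude $L^2(\mu)$-boundedness, the separated-sets lemma for $B,D$ and $G_2$, the $\upsilon$-small uncovered remainder for $E_3,F_3$, and the weak boundedness property with $\Lambda B\subset\Delta$ and $\#\mathcal{B}\lesssim C(\epsilon,\upsilon)$ for $G_1$. The only blemish is a harmless mislabeling of which sets are paired in $B$ and $D$ (the paper pairs $Q$ with $R_s$ and $Q_s$ with $\Delta$), which does not affect the separation argument or the resulting bound.
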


\section{Random dyadic systems}\label{sec:randomDyadic}

We now randomize our dyadic grids.
We first fix a reference system of dyadic points $(z^k_{\alpha})$ and the relation $\le$ essentially as in the case of the random ball covering (but working with the metric $d$ instead).
Indeed, for each $k \in \Z$ fix some maximal collection $z^k_{\alpha} \in X$ for which $d(z^k_{\alpha}, z^k_{\beta}) \ge \delta^k$ for all $\alpha \ne \beta$.
For each $(k, \alpha)$ there exists at least one $\beta$ for which $d(z^k_{\alpha},z^{k-1}_{\beta}) < \delta^{k-1}$. Also, there exists
at most one $\beta$ for which $d(z^k_{\alpha}, z^{k-1}_{\beta}) < \delta^{k-1}/2$. The ordering $\le$ is constructed using the rules we now describe. Consider any pair $(k, \alpha)$. Check first whether there exists $\beta$ so that
$d(z^k_{\alpha}, z^{k-1}_{\beta}) < \delta^{k-1}/2$. If so, set $(k, \alpha) \le (k-1,\beta)$ and $(k, \alpha) \not \le (k-1, \gamma)$ for $\gamma \ne \beta$. Otherwise, choose any $\beta$ for which
$d(z^k_{\alpha},z^{k-1}_{\beta}) < \delta^{k-1}$, and set $(k, \alpha) \le (k-1,\beta)$ and $(k, \alpha) \not \le (k-1, \gamma)$ for $\gamma \ne \beta$. Extend by transitivity.

Next, we introduce the transitive relation $\searrow$ exactly as before, and equipped with the same probabilistic notions. The new dyadic points $y^k_{\alpha}$ are build as before. The points are said to conflict if
$d(y^k_{\alpha}, y^k_{\beta}) < \delta^k/4$. Then we do the familiar removal procedure and get the final dyadic points $x^k_{\alpha}$, which satisfy
$d(x^k_{\alpha}, x^k_{\beta}) \ge \delta^k/4$ if $\alpha \ne \beta$. Observe also that for an arbitrary $x \in X$ there exists $x^k_{\alpha}$ so that $d(x, x^k_{\alpha}) < 3\delta^k$.
Using these one may then build a new relation $\le'$, similar to $\le$ but related to these new points, and the corresponding ``half-open'' cubes $Q^k_{\alpha}$.

Consider a given point $x \in X$. There exists $\beta$ so that $d(x, z^{k+1}_{\beta}) < \delta^{k+1} < \delta^k/500$. Let $(k+1, \beta) \le (k, \alpha)$.
We have $\mathbb{P}(z^{k+1}_{\beta} = x^k_{\alpha}) \ge \pi_0 > 0$ by an analog of Lemma \ref{centerl}. In particular, final dyadic points of consecutive generations $k$ and $k+1$ may well end up close to each other in this sense. Recalling
Lemma \ref{rlemma} this is relevant for the proof of the next lemma.

\begin{lem}\label{boundarylemma}
For some fixed $x \in X$ and $k \in \Z$, it holds
\begin{displaymath}
\mathbb{P}(x \in \delta_{Q^k_{\alpha}} \textrm{ for some }\alpha) \lesssim \epsilon^{\eta}
\end{displaymath}
for some $\eta > 0$.
\end{lem}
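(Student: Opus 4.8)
The plan is to dominate the event by an intersection of $\sim\log(1/\epsilon)$ ``bad at a single scale'' events, each of which fails with a fixed positive probability conditionally on the data of the coarser scales; the resulting geometric decay in the number of scales turns into the power $\epsilon^{\eta}$.

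\emph{Reduction to a chain condition.} Suppose $x\in\delta_{Q^k_\alpha}$ for some $\alpha$. Since $\tilde Q^k_\beta\subset Q^k_\beta\subset\overline{Q}^k_\beta$ and distinct cubes are disjoint, $d(x,Q^k_\alpha)\le\epsilon\delta^k$ gives $d(x,\overline{Q}^k_\alpha)\le\epsilon\delta^k$, while $d(x,X\setminus Q^k_\alpha)\le\epsilon\delta^k$ together with $X\setminus Q^k_\alpha\subset\bigcup_{\beta\neq\alpha}\overline{Q}^k_\beta$ and local finiteness gives $d(x,\overline{Q}^k_{\beta_1})\le\epsilon\delta^k$ for some $\beta_1\neq\alpha$. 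Hence $x$ is within $\epsilon\delta^k$ of $\overline{Q}^k_\beta$ for two distinct indices $\beta$, and therefore \emph{for every} $\beta$ with $x\in\overline{Q}^k_\beta$ one has $d(x,X\setminus\tilde Q^k_\beta)\le\epsilon\delta^k$ (use $\overline{Q}^k_{\beta'}\subset X\setminus\tilde Q^k_\beta$ for one of the two indices $\beta'\neq\beta$). Fix $m\in\N$ maximal with $1000\,\epsilon\le\delta^m$; if no such $m\ge1$ exists then $\epsilon\gtrsim1$ and there is nothing to prove, so assume $m\ge1$, noting $m\gtrsim\log(1/\epsilon)$. By Lemma \ref{fullcoverbyclosures} let $\sigma_{k+m}$ be the least index with $x\in\overline{Q}^{k+m}_{\sigma_{k+m}}$, and for $k\le j\le k+m$ let $\sigma_j$ be the unique $\le'$-ancestor of $\sigma_{k+m}$ at level $j$; then $(k+m,\sigma_{k+m})\le'\cdots\le'(k,\sigma_k)$ and $x\in\overline{Q}^{k+m}_{\sigma_{k+m}}\subset\overline{Q}^k_{\sigma_k}$. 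Since $d(x,X\setminus\tilde Q^k_{\sigma_k})\le\epsilon\delta^k<2\epsilon\delta^k$ and $500\cdot2\epsilon\le\delta^m$, Lemma \ref{rlemma} (applied with $2\epsilon$ in place of $\epsilon$) yields
\begin{displaymath}
  d(x^j_{\sigma_j},x^{j+1}_{\sigma_{j+1}})\ge\delta^j/500,\qquad j=k,\dots,k+m-1.
\end{displaymath}
With $B_j:=\{d(x^j_{\sigma_j},x^{j+1}_{\sigma_{j+1}})\ge\delta^j/500\}$ this shows $\{x\in\delta_{Q^k_\alpha}\text{ for some }\alpha\}\subset\bigcap_{j=k}^{k+m-1}B_j$.

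\emph{Conditional independence across scales.} Let $\mathcal{G}_i$ be the $\sigma$-algebra generated by the relations $\searrow$ at all levels $\ge i$. Each final point $x^j_\bullet$ is a function of the level-$j$ choice alone, and $\le'$ between two consecutive levels $\ge i$ is a function of the points at those levels; hence the closed cubes $\overline{Q}^\ell_\bullet$ with $\ell\ge i$ and the $\le'$-ancestor relations among levels $\ge i$ are $\mathcal{G}_i$-measurable. In particular $\sigma_{k+m}\in\mathcal{G}_{k+m}$, so $\sigma_j\in\mathcal{G}_j$, $x^j_{\sigma_j}\in\mathcal{G}_j$, $B_j\in\mathcal{G}_j$, and $\mathcal{G}_{i+1}\subset\mathcal{G}_i$. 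I claim $\mathbb{P}(B_j^c\mid\mathcal{G}_{j+1})\ge\pi_0$, where $\pi_0>0$ is the constant from the analog of Lemma \ref{centerl}. Condition on $\mathcal{G}_{j+1}$, so that $\sigma_{j+1}$ and $p:=x^{j+1}_{\sigma_{j+1}}$ are determined. Choose a level-$(j+1)$ reference point $z^{j+1}_{\beta_0}$ with $d(p,z^{j+1}_{\beta_0})\le\delta^{j+1}$ and let $\alpha_0$ be its unique $\le$-parent at level $j$. On the event $\{x^j_{\alpha_0}=z^{j+1}_{\beta_0}\}$, which depends only on the level-$j$ randomness and hence by the analog of Lemma \ref{centerl} has conditional probability $\ge\pi_0$, we get $d(p,x^j_{\alpha_0})\le\delta^{j+1}$, which, as $\delta$ is small, lies below the threshold (of order $\delta^j$) in the definition of $\le'$; so $\alpha_0$ is the $\le'$-parent of $\sigma_{j+1}$, i.e.\ $\sigma_j=\alpha_0$, and $d(x^j_{\sigma_j},x^{j+1}_{\sigma_{j+1}})\le\delta^{j+1}<\delta^j/500$, so $B_j$ fails.

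\emph{Assembling, and the main obstacle.} Since $B_{k+1},\dots,B_{k+m-1}\in\mathcal{G}_{k+1}$,
\begin{displaymath}
  \mathbb{P}\Big(\bigcap_{j=k}^{k+m-1}B_j\Big)=\mathbb{E}\Big[\chi_{\bigcap_{j=k+1}^{k+m-1}B_j}\,\mathbb{P}(B_k\mid\mathcal{G}_{k+1})\Big]\le(1-\pi_0)\,\mathbb{P}\Big(\bigcap_{j=k+1}^{k+m-1}B_j\Big),
\end{displaymath}
and iterating $m$ times gives $\mathbb{P}(\bigcap_{j=k}^{k+m-1}B_j)\le(1-\pi_0)^m$. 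As $m\gtrsim\log(1/\epsilon)$ this is $\lesssim\epsilon^{\eta}$ for a suitable $\eta=\eta(\pi_0,\delta)>0$, which is the assertion. The delicate point is the conditional-independence bookkeeping: one has to track exactly which random data $\sigma_j$, $x^j_\bullet$ and the $\le'$-ancestor relation depend on, the structural fact being that the level-$j$ points are fixed by the level-$j$ choice alone, whereas which of them is the $\le'$-parent of a prescribed level-$(j+1)$ cube depends also on the level-$(j+1)$ data. This is why the chain must be anchored at its finest end (level $k+m$): anchoring at level $k$ would force the use of the half-open cubes there, whose recursive description involves information from all coarser levels and destroys the martingale structure. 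One must also check that the pertinent separation threshold is precisely $\delta^j/500$ (which uses the standing choice $\delta<1/500$), exactly as supplied by Lemma \ref{rlemma}.
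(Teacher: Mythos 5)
Your proof is correct and takes essentially the same route as the paper: you reduce the event to the chain-separation condition of Lemma \ref{rlemma} for a chain anchored at the fine level $k+m$, and then use the independence of the level-by-level random choices together with the analogue of Lemma \ref{centerl} to get the bound $(1-\pi_0)^m\lesssim\epsilon^{\eta}$. The differences are only presentational: you spell out the reduction that the paper dismisses with ``what we have done actually proves the whole lemma'' (using $2\epsilon$ to handle the non-strict inequality) and you organize the scale-by-scale independence via a filtration instead of the paper's conditioning on the relation fixed from level $k+m$ up.
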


\begin{proof}
Recall the open and closed cubes $\tilde Q^k_{\alpha}$ and $\overline{Q}^k_{\alpha}$ and how they are related to the ``half-open'' cubes (we no longer use the hat notation so it may be a bit confusing).
One advantage of these is that they are determined by the centers a little bit differently than the ``half-open'' ones. Namely, to know these cubes for some generation $M$, it suffices to know the centers $x^{\ell}_{\beta}$ for generations $\ell \ge M$.

Fix the largest $m$ so that $500\epsilon \le \delta^m$. Now the point is to simply combine Lemma \ref{rlemma} with the last observation preceding this lemma.
Indeed, let the relation $\searrow$ be fixed from the level $k+m$ up. Choose some $\sigma_{k+m}$ so that $x \in \overline{Q}^{k+m}_{\sigma_{k+m}}$. We then randomly choose
the relation $\searrow$ between the levels $k+m$ and $k+m-1$. We have $\mathbb{P}(d(x^{k+m}_{\sigma_{k+m}}, x^{k+m-1}_{\beta}) < \delta^{k+m-1}/500 \textrm{ for some } \beta) \ge \pi_0$
so that $\mathbb{P}(d(x^{k+m}_{\sigma_{k+m}}, x^{k+m-1}_{\beta}) \ge \delta^{k+m-1}/500 \textrm{ for all } \beta) \le 1-\pi_0 =: \pi_1 < 1$. Let $(k+m, \sigma_{k+m}) \le' (k+m-1, \sigma_{k+m-1})$.
We again have $\mathbb{P}(d(x^{k+m-1}_{\sigma_{k+m-1}}, x^{k+m-2}_{\beta}) \ge \delta^{k+m-2}/500 \textrm{ for all } \beta) \le \pi_1 < 1$. We continue this way. Let $x \in \overline{Q}^k_{\alpha}$.
Lemma \ref{rlemma} implies together with independence that
\begin{align*}
\mathbb{P}(d(x, X \setminus \tilde Q^k_{\alpha}) < \epsilon\delta^k) &\le \mathbb{P}(d(x^{k+j}_{\sigma_{k+j}}, x^{k+j-1}_{\sigma_{k+j-1}}) \ge \delta^{k+j-1}/500 \textrm{ for all } j = 1, \ldots, m) \\
&\le \pi_1^m = (\delta^m)^{\log(\pi_1)/\log \delta} \lesssim \epsilon^{\eta},
\end{align*}
where $\eta = \log(\pi_1)/\log \delta > 0$. This was actually a conditional probability with the condition that the relation $\searrow$ was fixed in some way from the level $k+m$ up, but as this was arbitrary, the same estimate
holds without any conditionality. It remains to note that what we have done actually proves the whole lemma.
\end{proof}

\begin{thm}\label{bcubes}
For a fixed $Q^k_{\alpha}$ we have under the random choice of the other dyadic system that
\begin{displaymath}
\mathbb{P}(Q^k_{\alpha} \in \mathcal{D}_{\textrm{bad}}) \lesssim \delta^{r\gamma\eta}.
\end{displaymath}
\end{thm}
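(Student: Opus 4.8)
The plan is to reduce the event $\{Q^k_{\alpha}\in\mathcal{D}_{\textrm{bad}}\}$ to a boundary-proximity event for the random grid $\mathcal{D}'$, of exactly the kind already controlled by Lemma~\ref{boundarylemma}, and then to sum a geometric series over the scales of the competing $\mathcal{D}'$-cubes. Write $Q=Q^k_{\alpha}$, so $\ell(Q)=\delta^k$. By the definition of goodness, $Q$ is bad if and only if there is some $R\in\mathcal{D}'$ with $\ell(Q)\le\delta^r\ell(R)$ --- equivalently, $\ell(R)=\delta^m$ with $m\le k-r$ --- for which both $d(Q,R)<\ell(Q)^{\gamma}\ell(R)^{1-\gamma}$ and $d(Q,X\setminus R)<\ell(Q)^{\gamma}\ell(R)^{1-\gamma}$. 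Let $x_Q=x^k_{\alpha}\in Q$ be the centre of $Q$; recall that $\diam Q<C_0\ell(Q)$, and note that $\ell(Q)\le\ell(Q)^{\gamma}\ell(R)^{1-\gamma}$ since $\ell(Q)<\ell(R)$. A single application of the triangle inequality then upgrades both of the above conditions to
\[
  d(x_Q,R)<(C_0+1)\ell(Q)^{\gamma}\ell(R)^{1-\gamma}\qquad\textrm{and}\qquad d(x_Q,X\setminus R)<(C_0+1)\ell(Q)^{\gamma}\ell(R)^{1-\gamma}.
\]
Writing $\epsilon_m:=(C_0+1)(\ell(Q)/\ell(R))^{\gamma}=(C_0+1)\delta^{\gamma(k-m)}$, this says precisely that $x_Q\in\delta_R$, where $\delta_R$ is the set defined in Section~9 but with the parameter $\epsilon$ there replaced by $\epsilon_m$. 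Consequently $\{Q\in\mathcal{D}_{\textrm{bad}}\}$ is contained in the union, over the generations $m\le k-r$, of the events ``$x_Q\in\delta_R$ for some $R\in\mathcal{D}'$ with $\textrm{gen}(R)=m$'', with $\delta_R$ formed using the scale-dependent parameter $\epsilon_m$.

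Next I would invoke Lemma~\ref{boundarylemma}, applied to the random grid $\mathcal{D}'$: this is legitimate since the randomized construction is symmetric in $\mathcal{D}$ and $\mathcal{D}'$, and here $x_Q$ is a genuinely fixed point, being determined by the \emph{fixed} grid $\mathcal{D}$. The proof of that lemma uses nothing about the particular value of its parameter: it runs verbatim for any positive value, and always with the same exponent $\eta>0$. Hence, for each fixed $m\le k-r$,
\[
  \mathbb{P}\big(x_Q\in\delta_R\ \textrm{for some}\ R\in\mathcal{D}'\ \textrm{with}\ \textrm{gen}(R)=m\big)\lesssim\epsilon_m^{\eta}\lesssim\delta^{\gamma(k-m)\eta},
\]
the implied constant absorbing the fixed factor $(C_0+1)^{\eta}$. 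Summing over $m\le k-r$ by subadditivity and substituting $j=k-m$,
\[
  \mathbb{P}(Q\in\mathcal{D}_{\textrm{bad}})\lesssim\sum_{m\le k-r}\delta^{\gamma(k-m)\eta}=\sum_{j\ge r}\delta^{\gamma j\eta}=\frac{\delta^{r\gamma\eta}}{1-\delta^{\gamma\eta}}\lesssim\delta^{r\gamma\eta},
\]
since $\delta<1$ is fixed. This is the asserted estimate.

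The step that requires real thought is the reduction in the first paragraph: recognising that ``$Q$ is bad'' forces the centre $x_Q$ to lie within a $\delta^{\gamma(k-m)}$-fraction of $\ell(R)$ from the boundary of some $\mathcal{D}'$-cube $R$ of generation $m$, and --- crucially --- that the resulting proximity parameter $\epsilon_m$ decays geometrically in the generation gap $k-m\ge r$. Everything after that is a direct appeal to Lemma~\ref{boundarylemma} together with the summation of a convergent geometric series; the only mild subtlety there is that both that estimate and its exponent $\eta$ must be uniform in the auxiliary parameter, which is clear from the proof of the lemma.
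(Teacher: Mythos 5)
Your proof is correct and follows essentially the same route as the paper: both arguments reduce badness at generation gap $s=k-m\ge r$ to the fixed centre $x^k_{\alpha}$ lying within a $\sim\delta^{\gamma s}$-fraction of the boundary of the generation-$(k-s)$ cubes of the random grid, invoke the scale-dependent variant of Lemma \ref{boundarylemma} (whose proof is indeed uniform in the proximity parameter), and sum the geometric series $\sum_{s\ge r}\delta^{s\gamma\eta}\lesssim\delta^{r\gamma\eta}$. The only cosmetic difference is that the paper argues contrapositively via the distance of the centre to the complement of the unique containing $\mathcal{D}'$-cube, while you phrase the same reduction directly through the boundary regions $\delta_R$.
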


\begin{proof}
We make yet another assumption about the largeness of $r$. Namely,
we assume that $r$ is so large that $\delta^{r(1-\gamma)} < 1$ say. Let $R^{k-s}$ be the unique $\mathcal{D}'$-cube of generation $k-s$ containing
the center $x^k_{\alpha}$ of $Q^k_{\alpha}$. We have that
\begin{displaymath}
d(Q^k_{\alpha}, X \setminus R^{k-s}) \ge d(x^k_{\alpha}, X \setminus R^{k-s}) - C_0\delta^k.
\end{displaymath}
If $d(x^k_{\alpha}, X \setminus R^{k-s}) \ge 2\delta^{k\gamma}\delta^{(k-s)(1-\gamma)}$, then $d(Q^k_{\alpha}, X \setminus R^{k-s}) \ge \delta^{k\gamma}\delta^{(k-s)(1-\gamma)}$ for $s \ge r$
by the above inequality and the assumption that $\delta^{r(1-\gamma)} < 1$. Using a variant of the previous lemma we thus get that
\begin{displaymath}
\mathbb{P}(Q^k_{\alpha} \in \mathcal{D}_{\textrm{bad}}) \lesssim \sum_{s=r}^{\infty} (\delta^{\gamma \eta})^s \lesssim \delta^{r\gamma\eta}.
\end{displaymath}
\end{proof}

\section{Synthesis}

We now combine all these estimates to prove the non-trivial side of our main theorem, Theorem \ref{maintheorem}. Indeed, we now prove that $\|T\| \lesssim 1$. To this end,
choose $f$ and $g$ so that $|\langle Tf, g\rangle| \ge (1/2)\|T\|$, $\|f\|_{L^2(\mu)} = \|g\|_{L^2(\mu)} = 1$ and spt$\,f \subset B(x_0, \delta^m)$, spt$\,g \subset B(x_1, \delta^m)$ for some $x_0, x_1 \in X$ and $m \in \Z$. 
Write
\begin{displaymath}
f = \mathop{\sum_{Q \in \mathcal{D}}}_{\textrm{gen}(Q) \ge m} \Delta^{b_1}_Qf + \mathop{\sum_{Q \in \mathcal{D}}}_{\textrm{gen}(Q) = m} E^{b_1}_Q f = f_{\textrm{good}} + f_{\textrm{bad}},
\end{displaymath}
where
\begin{displaymath}
f_{\textrm{good}} = \mathop{\sum_{Q \in \mathcal{D}_{\textrm{good}}}}_{\textrm{gen}(Q) \ge m} \Delta^{b_1}_Qf + \mathop{\sum_{Q \in \mathcal{D}}}_{\textrm{gen}(Q) = m} E^{b_1}_Q f.
\end{displaymath}
We write the similar decomposition also for $g$, and then estimate
\begin{displaymath}
|\langle Tf, g\rangle| \le |\langle Tf_{\textrm{good}}, g_{\textrm{good}}\rangle| + |\langle Tf_{\textrm{good}}, g_{\textrm{bad}}\rangle| + |\langle Tf_{\textrm{bad}}, g\rangle|.
\end{displaymath}
Furthermore, we have
\begin{eqnarray*}
|\langle Tf_{\textrm{good}}, g_{\textrm{good}}\rangle| & \le & \sum |\langle T(\Delta^{b_1}_Qf), \Delta^{b_2}_Rg\rangle| + \sum |\langle T(\Delta^{b_1}_Qf), E^{b_2}_Rg\rangle| \\
& + & \sum |\langle T(E^{b_1}_Qf), \Delta^{b_2}_Rg\rangle| + \sum |\langle T(E^{b_1}_Qf), E^{b_2}_Rg\rangle|,
\end{eqnarray*}
where we sum over the obvious sets.

The first three series are similar so we only deal with the first one (by the above theory, it suffices that the term with the smaller support has zero integral). To this end, let us estimate the first series by (we agree that naturally all the time
$Q \in \mathcal{D}_{\textrm{good}}$ and $R \in \mathcal{D}'_{\textrm{good}}$)
\begin{displaymath}
\Big(\sum_{\ell(Q) \sim \ell(R)}  + \sum_{\ell(Q) \not \sim \ell(R): \, Q \cap R = \emptyset} + \sum_{\ell(Q) \not \sim \ell(R):\, Q \cap R \ne \emptyset}\Big) |\langle T(\Delta^{b_1}_Qf), \Delta^{b_2}_Rg\rangle|.
\end{displaymath}
The second series in the above decomposition is $\lesssim \|f\|_{L^2(\mu)}\|g\|_{L^2(\mu)} = 1$ by the sixth chapter (see Lemma \ref{ch6l} and Proposition \ref{ch6p}), while the third series is $\lesssim \|f\|_{L^2(\mu)}\|g\|_{L^2(\mu)} = 1$
by the seventh chapter (see (\ref{pares}), Theorem \ref{parabounded} and Proposition \ref{ch7p}).
We then write the first series in the above decomposition in the form
\begin{displaymath}
\mathop{\sum_{\ell(Q) \sim \ell(R)}}_{d(Q,R) \ge CC_0\min(\ell(Q), \ell(R))} |\langle T(\Delta^{b_1}_Qf), \Delta^{b_2}_Rg\rangle| + \mathop{\sum_{\ell(Q) \sim \ell(R)}}_{d(Q,R) < CC_0\min(\ell(Q), \ell(R))}
|\langle T(\Delta^{b_1}_Qf), \Delta^{b_2}_Rg\rangle|
\end{displaymath}
noting that the first series is $\lesssim \|f\|_{L^2(\mu)}\|g\|_{L^2(\mu)} = 1$ by the techniques used in the sixth chapter (see the beginning of the proof of Lemma \ref{ch6l} and Proposition \ref{ch6p}).
For a given cube $Q$ there exists only $\lesssim 1$ cubes $R$ such that
$\ell(Q) \sim \ell(R)$ and $d(Q,R) \lesssim \min(\ell(Q), \ell(R))$. Thus, we have by chapter 9 that (see Proposition \ref{ch9p})
\begin{eqnarray*}
\mathop{\sum_{\ell(Q) \sim \ell(R)}}_{d(Q,R) < CC_0\min(\ell(Q), \ell(R))} |\langle T(\Delta^{b_1}_Qf), \Delta^{b_2}_Rg\rangle| & \lesssim & \|T\|\Big(\sum_{\textrm{gen}(Q) > m} |A_Q|^2\|\chi_{Q_b}b_1\|_{L^2(\mu)}^2\Big)^{1/2} \\
& + & \|T\|\Big(\sum_{\textrm{gen}(R) > m} |B_R|^2\|\chi_{R_b}b_2\|_{L^2(\mu)}^2\Big)^{1/2} \\
& + & \upsilon^{1/2}\|T\| + C(\epsilon, \nu),
\end{eqnarray*}
where the constants $A_Q$ are related to the decomposition $\Delta_Q^{b_1}f = \sum A_{Q'}\chi_{Q'}b_1$, where we sum over the subcubes $Q'$ of $Q$. The constants $B_R$ are similarly related to $g$.

The fourth series involving the factors $E^{b_1}_Qf$ and $E^{b_2}_Rg$ has no terms with zero integral but the point is that there are only $\lesssim 1$ nonzero terms as the functions $f$ and $g$ are supported
on balls of radius $\delta^m$ and $\textrm{gen}(Q) = \textrm{gen}(R) = m$ in that sum. One can deal with the well separated terms using the first kernel estimate and use the estimates of chapter nine for the rest (see Proposition \ref{ch9p}).
The net result is that
\begin{eqnarray*}
(1/2)\|T\| \le |\langle Tf, g\rangle| & \lesssim & \|T\|\Big(\sum_{\textrm{gen}(Q) \ge m} |A_Q|^2\|\chi_{Q_b}b_1\|_{L^2(\mu)}^2\Big)^{1/2}\\
& + & \|T\|\Big(\sum_{\textrm{gen}(R) \ge m} |B_R|^2\|\chi_{R_b}b_2\|_{L^2(\mu)}^2\Big)^{1/2} \\
& + & \upsilon^{1/2}\|T\| + \|T\| \|g_{\textrm{bad}}\|_{L^2(\mu)} + \|T\| \|f_{\textrm{bad}}\|_{L^2(\mu)} + C(\epsilon, \nu),
\end{eqnarray*}
where the constants for the level $m$ come from writing $E_Q^{b_1}f =  A_Q\chi_Qb_1$ and $E_R^{b_2}g = B_R\chi_Rb_2$.
Choosing $r$ large enough, $\epsilon$ and $\upsilon$ small enough, and choosing the dyadic grids $\mathcal{D}$ and $\mathcal{D}'$ so that
that the first five terms (together with the implicit constants in front) contribute less than $(1/4)\|T\|$ yields that
$\|T\| \lesssim 1$ as desired. These details follow pretty much as in \cite{NTV} now that the lemmata in the previous chapter have been proven.

Let us quickly sketch the details for completeness. We can estimate $\mathbb{E}\|f_{\textrm{bad}}\|_{L^2(\mu)}^2$ (over the grids $\mathcal{D}$ and $\mathcal{D}'$) by introducing the square function
\begin{displaymath}
Sh(x) = \mathop{\sum_{Q \in \mathcal{D}}}_{\textrm{gen}(Q) \ge m} \|\Delta^{b_1}_Qh\|_{L^2(\mu)}^2\mu(Q)^{-1}\chi_Q + \mathop{\sum_{Q \in \mathcal{D}}}_{\textrm{gen}(Q) = m} \|E^{b_1}_Q h\|_{L^2(\mu)}^2\mu(Q)^{-1}\chi_Q.
\end{displaymath}
The point is that $\int_X Sh\,d\mu \sim \|h\|_{L^2(\mu)}^2$ for all $h$. By Theorem \ref{bcubes} we have for any fixed grid $\mathcal{D}$ taking the expectation over the grids $\mathcal{D}'$ that
\begin{displaymath}
\mathbb{E} \|f_{\textrm{bad}}\|_{L^2(\mu)}^2 \lesssim \int_X \mathbb{E} Sf_{\textrm{bad}}\,d\mu \lesssim \delta^{r\gamma\eta} \int_X Sf\,d\mu \lesssim \delta^{r\gamma\eta}\|f\|_{L^2(\mu)}^2 = \delta^{r\gamma\eta},
\end{displaymath}
and then the same holds if we take the expectation over all the grids $\mathcal{D}$ and $\mathcal{D}'$ too. We now once and for all fix $r$ to be so large that everything we have done above works and
that we have $\mathbb{E} \|f_{\textrm{bad}}\|_{L^2(\mu)}^2 \le 2^{-100}$. The same argument shows that $\mathbb{E} \|g_{\textrm{bad}}\|_{L^2(\mu)}^2 \le 2^{-100}$.

To deal with the remaining terms, write
\begin{displaymath}
f^k = \sum_{\textrm{gen}(Q) = k} A_Q\chi_Qb_1 \qquad \textrm{and} \qquad f^k_b = \sum_{\textrm{gen}(Q) = k} A_Q\chi_{Q_b}b_1.
\end{displaymath}
We have by Lemma \ref{boundarylemma} that $\mathbb{E}\chi_{Q_b}(x) \lesssim \epsilon^{\eta}\chi_Q(x)$ for all $x \in X$ and for all $Q$ in a fixed grid $\mathcal{D}$ (taking the expectation over the grids $\mathcal{D}'$), and thus
\begin{displaymath}
\mathbb{E} \sum_{k \ge m} \|f^k_b\|_{L^2(\mu)}^2 = \sum_{k \ge m} \int_X \mathbb{E} |f^k_b|^2\,d\mu \lesssim \epsilon^{\eta} \sum_{k \ge m} \|f^k\|_{L^2(\mu)}^2 \lesssim \epsilon^{\eta}\|f\|_{L^2(\mu)}^2 = \epsilon^{\eta}.
\end{displaymath}
The same then holds if we take the expectation over all the grids $\mathcal{D}$ and $\mathcal{D}'$ too. The same argument shows that also (with the obvious notations)
\begin{displaymath}
\mathbb{E} \sum_{k \ge m} \|g^k_b\|_{L^2(\mu)}^2 \lesssim \epsilon^{\eta}.
\end{displaymath}
This proves that we may choose the grids $\mathcal{D}$ and $\mathcal{D}'$ so that
\begin{displaymath}
(1/2-2^{-50}-C_4\upsilon^{1/2}-C_5\epsilon^{\eta/2})\|T\| \lesssim C(\epsilon, \upsilon),
\end{displaymath}
from which the claim follows by choosing the constants $\epsilon$ and $\upsilon$ properly.

\section{Application to Bergman-type operators}

Volberg and Wick \cite{VW} recently obtained a characterization of measures $\mu$ in the unit ball $\mathbb{B}_{2n}$ of $\C^n$ for which the analytic Besov--Sobolev space $B^{\sigma}_2(\mathbb{B}_{2n})$ embeds continuously into $L^2(\mu)$. (More precisely, they completed the picture by settling the remaining difficult case concerning $\sigma\in(1/2,n/2)$.) Their proof goes through a new $T1$ theorem for what they call ``Bergman-type'' operators. Let us describe the situation to see that this application (although, unfortunately, not their abstract $T1$ theorem behind it) could also be obtained as a consequence of our theory.

The measures $\mu$ in \cite{VW} satisfy the upper power bound $\mu(B(x,r))\leq r^m$, except possibly when $B(x,r)\subseteq H$, where $H$ is a fixed open set. However, in the exceptional case there holds $r\leq\delta(x):=d(x,H^c)$, and hence
\begin{equation*}
  \mu(B(x,r))\leq\lim_{\epsilon\to 0+}\mu(B(x,\delta(x)+\epsilon))
  \leq\lim_{\epsilon\to 0+}(\delta(x)+\epsilon)^m=\delta(x)^m.
\end{equation*}
Thus we find that their measures are actually upper doubling with
\begin{equation*}
  \mu(B(x,r))\leq\max(\delta(x)^m,r^m)=:\lambda(x,r).
\end{equation*}

The Bergman-type kernels $K(x,y)$ of \cite{VW} are required to have the pointwise estimate
\begin{equation*}
  |K(x,y)|\leq C\min\Big(\frac{1}{d(x,y)^m},\frac{1}{\max(\delta(x)^m,\delta(y)^m)}\Big),
\end{equation*}
where the upper bound is seen to be precisely the same
\begin{equation*}
  C\min\Big(\frac{1}{\lambda(x,d(x,y))},\frac{1}{\lambda(y,d(x,y))}\Big)
\end{equation*}
as required by our theory. However, the H\"older-continuity estimate is only assumed in the form
\begin{equation*}
  |K(x,y)-K(x',y)|+|K(y,x)-K(y,x')|\leq \frac{Cd(x,x')^{\alpha}}{d(x,y)^{m+\alpha}},
\end{equation*}
for $d(x,y)>2d(x,x')$, which is weaker than our condition when $\delta(x)\gg d(x,y)$. Hence the abstract main result of Volberg and Wick, \cite[Theorem~1]{VW}, is not as such included in our theory, but consists of a different extension of the non-homogeneous analysis of \cite{NTV}.

However, when it comes to the main application concerning the Besov--Sobolev spaces, \cite[Theorem~2]{VW}, the relevant kernel has the specific form
\begin{equation*}
  K(x,y)=(1-\bar{x}\cdot y)^{-m},\qquad x,y\in\bar{\mathbb{B}}_{2n}\subset\C^n.
\end{equation*}
Here $\bar{x}$ stands for the componentwise complex conjugation, and dot designates the usual dot product of $n$-vectors. Moreover, one equips $\bar{\mathbb{B}}_{2n}$ with the regular quasi-distance (see \cite[Lemma 2.6]{Tc})
\begin{equation*}
  d(x,y):=\big||x|-|y|\big|+\Big|1-\frac{\bar{x}\cdot y}{|x|\,|y|}\Big|.
\end{equation*}
Finally, the set $H$ related to the exceptional balls is now the open unit ball $\mathbb{B}_{2n}$. It is noteworthy that $\delta(x)=d(x,H^c)=1-|x|$ is the same as the distance of $x$ and $H^c$ in the Euclidean metric \cite[Lemma 2.8]{Tc}.

In \cite{VW} it is checked that this kernel $K$, the quasi-metric $d$, and the set $H$ indeed satisfy the Bergman-type kernel estimates. We now observe that even the standard estimates of our theory are verified. It is shown in \cite[Eq. (6)]{Tc} that $|1-\bar{x}\cdot y|\geq 3^{-1}d(x,y)$, and obviously $|1-\bar{x}\cdot y|\geq 1-|x|=\delta(x)$; similarly with $y$ in place of $x$. This confirms the first standard estimate, which we already knew. As for the H\"older-continuity, the proof of \cite[Proposition~2.13]{Tc} contains the bound
\begin{equation*}
  |K(x,y)-K(x',y)|\lesssim \Big(\frac{d(x,x')}{|1-\bar{x}\cdot y|}\Big)^{1/2}\frac{1}{|1-\bar{x}\cdot y|^m},
\end{equation*}
for $d(x,y)>Cd(x,x')$, and it suffices to use $|1-\bar{x}\cdot y|\gtrsim d(x,y)$ in the first factor and $|1-\bar{x}\cdot y|\gtrsim\max(d(x,y),\delta(x))$ in the second. The H\"older estimate with respect to the second variable is of course completely analogous.

In the formulation of their weak boundedness property and the BMO conditions, Volberg and Wick use certain ``cubes''  \cite[Sec.~7]{VW} which, just like in $\R^n$ with the usual distance, can actually be viewed as balls with respect to an equivalent regular quasimetric of $\ell^{\infty}$-type. So even this is compatible with our theory. Volberg and Wick conclude their paper \cite{VW} with essentially the same remark, with which Nazarov, Treil and Volberg started theirs \cite{NTV}, that ``these considerations can be extended to the case of metric spaces.'' And indeed they can!

\bibliographystyle{amsalpha}
\bibliography{metric}

\end{document}